\def\dd{\mathrm{d}}
\def\R{\mathbb{R}}
\newtheorem{thm}{Theorem}
\newtheorem{lem}[thm]{Lemma}
\newtheorem{rmk}[thm]{Remark}
\newtheorem{deff}[thm]{Definition}
\newtheorem{prop}[thm]{Proposition}
\numberwithin{equation}{section}
\numberwithin{thm}{section}
\DeclareMathOperator*{\esssup}{ess\,sup}
\begin{document}

\title{The Derivation of the Linear Boltzmann Equation from a Rayleigh Gas Particle Model}

\author{Karsten Matthies}
\address{Department of Mathematical Sciences, University of Bath, Bath BA2
7AY, United Kingdom} \email{K.Matthies@bath.ac.uk}

\author{George Stone}
\address{Department of Mathematical Sciences, University of Bath, Bath BA2
7AY, United Kingdom} \email{G.Stone@bath.ac.uk}

\author{Florian Theil}
\address{Mathematics Institute, University of Warwick, Coventry CV4
7AL, United Kingdom} \email{f.theil@warwick.ac.uk}

\begin{abstract}
A linear Boltzmann equation is derived in the Boltzmann-Grad scaling
for the deterministic dynamics of many
interacting particles with random initial data. We study a Rayleigh gas where a tagged particle is undergoing hard-sphere collisions with background particles, which do not interact among each other. In the Boltzmann-Grad scaling, we derive the validity of a linear Boltzmann equation for arbitrary long times under moderate assumptions on higher moments of the initial distributions of the tagged particle and the possibly non-equilibrium distribution of the background. The convergence of the empiric dynamics to the Boltzmann dynamics is shown using Kolmogorov equations for associated probability measures on collision histories.
\end{abstract}

\maketitle

\section{Introduction}

The derivation of continuum equations from atomistic particle models is currently a major problem in mathematical physics with origins in Hilbert's Sixth Problem. A particular interest in this area is the derivation of the Boltzmann equation from atomistic particle dynamics. The first major work in this area was by Lanford \cite{lanford75} which showed convergence from a hard-sphere particle model for short times by using the Bogoliubov-Born-Green-Kirkwood-Yvon (BBGKY) hierarchy, see e.g. \cite{born46,cercignani94,uchiyama1988}. A recent major work by Gallagher, Saint-Raymond and Texier \cite{saintraymond13} continued the BBGKY development and proved convergence to the Boltzmann equation for short times for both hard-sphere and short-range potentials. The latter of which was improved by Pulvirenti \cite{pul13}. A particular difficulty has been proving convergence for arbitrarily long times, since the aforementioned results hold only for times up to a fraction of the mean free flight time. In \cite{bod15brown} Bodineau, Gallagher and Saint-Raymond were able to utilise the tools from \cite{saintraymond13} to prove convergence from a hard-sphere particle model to the linear Boltzmann equation for arbitrary long times in the case that the initial distribution of the background is near equilibrium. They were further able to use the linear Boltzmann equation as an intermediary step to prove convergence to Brownian motion. Further in a following paper \cite{bod15} the authors were able to consider weaker assumptions on the initial data and prove convergence to the Stokes-Fourier equations. For a general overview of the Boltzmann equation and the BBGKY hierarchy we refer to the books \cite{cerci88,cercignani94,spohn91}.

A new method to tackle this problem has recently been developed in a series of papers \cite{matt08,matthies10,matt12}. This method employs semigroup techniques to study the evolution of collision trees rather than the BBGKY hierarchy. This comes from studying the distribution of the history of the particles up to a certain time rather than the distribution of the particles at a specific time. These papers have been able to prove convergence for arbitrary times but only for a simplified particle interaction system. This is the approach that we continue to develop in this paper.

\subsection{The BBGKY Hierarchy}

The standard approach to tackling the derivation of the Boltzmann equation is via the BBGKY hierarchy. We refer specifically to \cite{bod15brown} for hard spheres. If $f_N(t)$ represents the $N$ particle distribution resulting from hard sphere dynamics at time $t$ then, away from collisions, $f_N$ satisfies the Liouville equation,
\[ \partial_t f_N(t) + v \cdot \partial_x f_N(t)= 0. \]
Considering this equation in weak form and integrating we have, away from collisions,
\begin{equation} \label{eq-bbgky}
 \partial_t f_N^{(s)}(t) + \sum_{1 \leq i \leq s } v_i \cdot \nabla _{x_i} f_N(t) = C_{s,s+1}f_N ^{(s+1)}(t),
\end{equation}
for $s=1,\dots,N$ where $f_N^{(s)}$ denotes the $s$ particle marginal and where $C_{s,s+1}$ is the effect on the distribution of the first $s$ particles by a collision with another particle given explicitly in \eqref{eq-Chs} below. The system of $N$ equations \eqref{eq-bbgky} is known as the BBGKY hierarchy. If the initial distribution of the $N$ particles is given by,
$$f_N(0)=\frac{1}{\mathcal{Z}_N} \mathbbm{1}_{\mbox{no}} f_0^{\otimes N},$$
where $\mathbbm{1}_{\mbox{no}}$ conditions on no initial overlap and $\mathcal{Z}_N$ is a normalising constant, then the initial distribution of $f_N^{(s)}$ is given by,
\[ f_N^{s}(0,Z_s) = \int f_N(0,Z_n) \, \mathrm{d}z_{s+1} \dots \, \mathrm{d}z_{N}.  \]
After successive time integration of \eqref{eq-bbgky} one obtains a representation in the form of a finite sum
\begin{align} \label{Duhamel}
f^{(s)}_N(t) = &\sum_{k=0}^{N-s} \int_0^t\int_0^{t_1}\ldots
\int_0^{t_{k-1}} {\bf T}_s(t-t_1){\mathcal C}_{s,s+1}{\bf T}_{s+1}
(t_1-t_2){\mathcal C}_{s+1,s+2}\ldots\\
\nonumber
&\hspace{5cm}\ldots {\bf T}_{s+k}(t_k)f_N^{(s+k)}(0)
\, \dd t_k\ldots \dd t_1,
\end{align}
where ${\bf T}_s$ is the flow map of $s$ hard spheres.

A popular method to establish the convergence of solutions of \eqref{eq-bbgky} is to first demonstrate that
the mild form of the BBGKY hierarchy is given by a contracting operator if $t$ is sufficiently small. This step ensures that the
sum \eqref{Duhamel} converges absolutely, and in a second step one checks the convergence of the individual
terms.
It is noteworthy that the necessity to establish the contraction property of the mild BBGKY operator (and thereby
restricting the analysis to small values of $t$) is
due to the fact that the individual terms in \eqref{Duhamel} are unsigned.

The key achievement of this paper is to demonstrate that in a simpler setting a representation formula similar to
\eqref{Duhamel} for $f_N^{(s)}$ can be found so that the
individual terms are non-negative and can be interpreted
as probabilities. This representation offers two significant
advantages:
\begin{enumerate}
\item It is possible to allow irregular initial data which
may not be exponentially tight.\\
\item The need to establish a contraction property is replaced by a tightness bound, which is related to the
properties of physically relevant objects such as collision
histories. In particular, convergence can be
established for all times.
\end{enumerate}

\subsection{The Lorentz and Rayleigh Gas}

Instead of considering a system of $N$ identical hard spheres evolving via elastic collisions one can consider a single tagged or tracer particle evolving among a system of fluid scatterers or background particles.

If the background particles are fixed and of infinite relative mass to the tracer particle then one has a model known as the Lorentz gas first introduced by Lorentz in \cite{lor05} to study the motion of electrons in a metal.

Much research has been done deriving the linear Boltzmann equation from a Lorentz gas with randomly placed scatterers, for example \cite{bold83,gall99,spohn78} and a large number of references found in Part I Chapter 8 of \cite{spohn91}. The linear Boltzmann equation can however fail to hold if we consider non-random periodic scatterers, as shown for example by Golse \cite{golse08} and Marklof \cite{marklof10}. The existence of a limiting stochastic process for the periodic Lorentz gas from the Boltzmann-Grad limit was shown by Marklof and Strömbergsson in \cite{mark11}.

When a force field is present the convergence of the distribution of the tracer particle in an absorbing Lorentz gas to the solution of a gainless linear Boltzmann equation was proved in \cite{desvi07}. The authors also proved that if the scatterers move with a constant random initial velocity then the convergence can be proven with significantly weaker assumptions on the force field.

Closely related to the Lorentz gas is the Rayleigh gas, where the background particles are no longer of infinite mass. Convergence to Brownian motion is discussed in Part I Chapter 8 of \cite{spohn91}. In \cite{lebo82} Lebowitz and Spohn proved the convergence of the momentum process for a test particle to a jump process associated to the linear Boltzmann equation. This was proved for arbitrarily long times, via the BBGKY hierarchy, when the initial distribution of the velocities is at equilibrium. This builds on their previous work  \cite{lebo78,lebo82b,bei80}.

In this paper we consider a Rayleigh gas where the background particles are of equal mass to the tagged particle and have no self interaction. The particles evolve via a simplified form of hard-sphere dynamics whereby the background particles do not change velocity. We consider non-equilibrium initial data but require that the background particles are spatially homogeneous. Convergence is proved for arbitrary times.

\section{Model and Main Result}
Our Rayleigh gas model in three dimensional space is now detailed. Define $U:=[0,1]^3$ with periodic boundary conditions.
Here a tagged particle evolves via the hard sphere flow and  the remaining $N$ particles do not interact, i.e. move along straight lines. The initial distribution of the tagged particle is $f_0 \in L^1 (U \times \mathbb{R}^3 )$. The  $N$ background particles are independently distributed according to the law
$g_0 \in L^1 ( \mathbb{R}^3 )$ in velocity space and uniform in $U$.

The tagged particle and the background particles are spheres with diameter $\varepsilon >0$ which is related to $N$ via the Boltzmann-Grad scaling,
\begin{equation} \label{eq-boltzgrad}
N\varepsilon^2 =1.
\end{equation}
The background particles always travel in free flow with their velocities never changing from the initial value. The tagged particle travels in free flow whilst its centre remains at least $\varepsilon$ away from the centre of all the background particles.

When the centre of the tagged particle comes within $\varepsilon$ of the centre of a background particle the tagged particle collides as if it was a Newtonian hard-sphere collision and changes velocity.

Explicitly this is described as follows. Denote the position and velocity of background particle $1\leq j \leq N$ at time $t$ by $(x_j(t),v_j(t))$. Then for all $t\geq 0$,
\begin{align*}
 \frac{\mathrm{d}x_j(t)}{\mathrm{d}t} = v_j(t) \textrm{ and }   \frac{\mathrm{d}v_j(t)}{\mathrm{d}t}  = 0.
\end{align*}
Further denote the position and velocity of the tagged particle at time $t$ by $(x(t),v(t))$. Then for all $t\geq 0$,
\[ \frac{\mathrm{d}x(t)}{\mathrm{d}t} = v(t) .\]
If at time $t$ for all $1\leq j \leq N$, $|x(t)-x_j(t)|>\varepsilon$ then ${\mathrm{d}v(t)}/{\mathrm{d}t}=0$. Otherwise there exists a $1\leq j \leq N$ such that $|x(t)-x_j(t)|=\varepsilon$ and the tagged particle experiences an instantaneous change of velocity. Define the collision parameter $\nu \in \mathbb{S}^2$ by,
\[ \nu := \frac{x(t)-x_j(t)}{|x(t)-x_j(t)|} .\]

\begin{figure}[h!]
\centering
\includegraphics[scale=1]{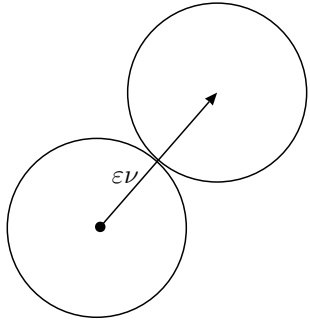}
\caption{The collision parameter $\nu$}
\label{fig-nu}
\end{figure}

Then the velocity of the tagged particle instantaneously after the collision, $v(t)$, is given by
\[ v(t):= v(t^-) - \nu \cdot (v(t^-) - v_j)\nu.   \]

 It is noted that in this model we do not have conservation of momentum. The background particles do not change velocity and the root particle does.

\begin{prop}\label{prop:wellposedHS}
For $N \in \mathbb{N}$ and $T>0$ fixed these dynamics are well defined up to time $T$ for all initial configurations apart from a set of measure zero.
\end{prop}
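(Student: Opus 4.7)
The plan is to identify a Lebesgue-null subset of initial configurations and show that on its complement the collision events can be inductively constructed for all $t \in [0, T]$. First I would note that, since each background moves on the straight line $t \mapsto x_j + t v_j$, the candidate first collision time $\tau_j$ between the tagged particle and background $j$ is the smallest positive root (if one exists) of the quadratic equation $|x + t v - x_j - t v_j|^2 = \varepsilon^2$, and the candidate first collision time is $\tau_1 = \min_j \tau_j$. The bad set at this first stage consists of configurations where either (i) the tagged particle initially overlaps with some background, (ii) two distinct indices $j \neq k$ satisfy $\tau_j = \tau_k$, or (iii) the collision is grazing, i.e.\ $(v - v_j) \cdot \nu = 0$ where $\nu$ points from $x_j(\tau_1)$ to $x(\tau_1)$. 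Condition (i) is excluded by the initial data convention, while (ii) and (iii) each cut out an algebraic subvariety of positive codimension in $(U \times \mathbb{R}^3)^{N+1}$ and therefore have Lebesgue measure zero.

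On the complement the first collision is unambiguously defined: $\tau_1$ is achieved by a unique index $j^\star$, and the parameter $\nu$ together with the post-collision velocity are specified by the reflection formula in the statement. The map sending the initial state to the state immediately after the first collision is a measure-preserving piecewise diffeomorphism, namely free transport composed with an orthogonal reflection of $v$ in the hyperplane orthogonal to $\nu$. I would then apply the same exclusion to the new configuration at time $\tau_1$, and by induction construct $\tau_1 < \tau_2 < \cdots$ on the complement of a countable union of Lebesgue-null pullbacks, which remains Lebesgue-null.

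The main obstacle is ruling out accumulation $\tau_n \uparrow \tau_\infty \leq T$, since the inductive construction only covers $[0, T]$ if finitely many collisions occur. Here I would fix a good initial configuration and let $V < \infty$ denote an upper bound on $|v|$ and on all $|v_j|$, which exists once the velocities are drawn. Immediately after a collision with background $j$, the relative velocity has strictly positive component along $\nu$, so the tagged particle exits the $\varepsilon$-neighbourhood of $x_j + t v_j$ before any re-collision with $j$ can occur. For collisions with other backgrounds, I would argue transversally: the set of configurations for which the tagged trajectory enters some $\varepsilon$-tube tangentially or for which consecutive collision times coincide is again a countable union of positive-codimension algebraic subvarieties, hence measure zero. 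Outside this larger Lebesgue-null set, every collision has a strictly positive transverse relative speed, yielding a configuration-dependent uniform lower bound on inter-collision times and so only finitely many collisions in $[0, T]$, which completes the well-posedness argument.
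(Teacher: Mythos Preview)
Your construction of the collision sequence $\tau_1 < \tau_2 < \cdots$ on the complement of a null set is fine, and the exclusion of simultaneous and grazing first collisions as positive-codimension algebraic sets is correct. The genuine gap is the final step, where you claim that excluding grazing collisions ``yields a configuration-dependent uniform lower bound on inter-collision times.'' This does not follow. Non-grazing at each collision only tells you that the relative normal speed at the $n$-th collision is some $c_n>0$; it gives no control on $\inf_n c_n$, and even a uniform lower bound on the $c_n$ would not by itself bound $\tau_{n+1}-\tau_n$ from below, since immediately after the $n$-th collision the tagged particle may lie at distance $\varepsilon+\delta$ from a \emph{different} background with $\delta$ arbitrarily small. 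Your argument that the tagged particle separates from the background it has just hit is correct but irrelevant to the next collision, which is generically with another particle. In short, you have assumed the conclusion: asserting a uniform lower bound on $\tau_{n+1}-\tau_n$ is equivalent to asserting that the sequence does not accumulate.

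The paper avoids this circularity by a quantitative time-slicing (Alexander-type) argument rather than a pointwise one. One fixes a velocity cutoff $R$ and a time step $\delta<\varepsilon/2$, and estimates directly the Lebesgue measure of initial configurations for which the tagged particle meets \emph{two or more} backgrounds in a single window $[k\delta,(k+1)\delta]$; a crude tube estimate gives a bound of order $\delta^6$ per window, hence $O(\delta^5)$ in total over $[0,T]$. Sending $\delta\to 0$ along a sequence and then $R\to\infty$ produces a genuine null set outside of which there is at most one collision per window, and therefore at most $T/\delta$ collisions --- ruling out accumulation without ever invoking transversality. If you want to repair your approach, you would need a measure-theoretic estimate of this kind; transversality alone is not enough.
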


The proof is given in section \ref{sec:aux}, which establishes that almost surely all collisions involve only pairs.

We are interested in studying the distribution of
a tagged particle among $N$ background particles, $\hat{f}^N_t$, under the above particle dynamics as $N$ tends to infinity or equivalently as $\varepsilon$ tends to zero.
\begin{deff} \label{admissible}
Probability densities $f_0\in L^1(U\times \R^3)$, $g_0 \in L^1(\R^3)$ are admissible if
	\begin{eqnarray} 
	\label{eq-f0assmp2}
	\int_{ U \times \mathbb{R}^3} f_0 (x,v)(1+|v|^2) \, \mathrm{d}x \, \mathrm{d}v  < \infty,\\
	\label{eq-g0l1assmp}
	\int_{ \mathbb{R}^3} g_0 (v)(1+|v|^2) \, \, \mathrm{d}v  < \infty,\\
	\label{eq-g0linf5}
	\esssup_{v\in\mathbb{R}^3} g_0(v)(1+|v|^4) < \infty.
	\end{eqnarray}
\end{deff}
The distribution of the tagged particle is shown to converge to the solutions of a linear Boltzmann  equation up to a finite arbitrary time $T$. We now state the main theorem of this paper.

\begin{thm} \label{thm-main}
Let $0<T<\infty$ and $f_0,g_0$ be admissible. Then
$\hat f_t^N$ converges to a time-dependent density $f_t$ in the TV sense. Moreover, the limit
$f_t$ satisfies the linear Boltzmann equation
\begin{equation} \label{eq-linboltz}
\begin{cases}
\partial_t f_t (x,v) & = -v \cdot \partial_x f_t(x,v) + Q[f_t](x,v), \\
f_{t=0}(x,v)  & = f_0(x,v),
\end{cases}
\end{equation}
where the collision operator $Q$ is defined by $Q:=Q^+-Q^-$ and $Q^+$ and $Q^-$ known respectively as the gain and loss term are given as follows,
\[ Q^+[f](x,v) = \int_{\mathbb{S}^2} \int_{\mathbb{R}^3} f(x,v')g_0(\bar{v}')[(v-\bar{v})\cdot \nu]_+ \, \mathrm{d}\bar{v} \, \mathrm{d}\nu, \]
where the pre-collision velocities, $v'$ and $\bar{v}'$, are given by $v' = v+ \nu \cdot (\bar{v}-v) \nu$ and $\bar{v}'=\bar{v} - \nu \cdot (\bar{v}-v)\nu$, and
\[ Q^-[f](x,v) = f(x,v)\int_{\mathbb{S}^2} \int_{\mathbb{R}^3}  g_0(\bar{v})[(v-\bar{v})\cdot \nu]_+ \, \mathrm{d}\bar{v} \, \mathrm{d}\nu. \]
\end{thm}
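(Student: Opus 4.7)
The plan is to lift the dynamics to probability measures on \emph{collision trees}, i.e. histories of the form $\tau=(x_0,v_0;(t_1,\nu_1,\bar v_1),\ldots,(t_k,\nu_k,\bar v_k))$ specifying the initial state of the tagged particle together with the ordered sequence of its collisions up to time $T$. Both the empirical $N$-particle system and the Boltzmann process \eqref{eq-linboltz} induce probability measures $\mathbb P^N$ and $\mathbb P^\infty$ on this tree space; the $(x(T),v(T))$-marginal of $\mathbb P^N$ is $\hat f^N_T$, and the marginal of $\mathbb P^\infty$ will be the claimed $f_T$. Forward-Kolmogorov arguments for $\mathbb P^\infty$ produce a Duhamel-like expansion whose $k$-th term has the form
\[
\int \!\!\cdots\!\! \int \prod_{i=1}^{k} g_0(\bar v_i)\,[(v_{i-1}-\bar v_i)\cdot \nu_i]_+ \, \dd\bar v_i\,\dd\nu_i\,\dd t_i \; \cdot \; f_0(x_0,v_0),
\]
integrated over the simplex $0<t_k<\cdots<t_1<T$, with pre-collision velocities updated via the collision rule. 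Unlike the signed representation \eqref{Duhamel}, each term here is non-negative and has an immediate probabilistic meaning.

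First I would write the analogous expansion for $\mathbb P^N$, where the integrand is replaced by a sum over ordered tuples of the $N$ background particles, with indicator functions enforcing: (a) no initial overlap, (b) that the straight-line tagged trajectory between scheduled collisions stays $\varepsilon$-away from every background particle. A small-volume computation, using the fact that the background is spatially uniform, shows that, term by term, the empirical tree density factorises into a product matching the Boltzmann density, up to errors controlled by (i) initial overlap of size $O(N\varepsilon^3)$, (ii) \emph{recollisions} where the tagged particle hits the same scatterer twice, and (iii) \emph{interferences} where a scatterer destined for collision $i$ would already have been encountered before $t_i$. Standard tube-volume arguments using $N\varepsilon^2=1$ bound the measure of (ii)--(iii) by a quantity $O(\varepsilon\, T^{2}\, \|v\text{-moments}\|)$ per fixed $k$.

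The main obstacle is tightness in the number of collisions. The admissibility assumptions \eqref{eq-f0assmp2}--\eqref{eq-g0linf5} are far from exponential, so one cannot sum the Duhamel series by brute force. The bound \eqref{eq-g0linf5} on $g_0(v)(1+|v|^4)$ is the key: it implies that the Boltzmann collision kernel $\int g_0(\bar v)|v-\bar v|\,\dd \bar v$ grows at most linearly in $|v|$, so the conditional rate of producing the next collision given current velocity $v$ is $O(1+|v|)$. Combined with the energy bound from \eqref{eq-f0assmp2}, \eqref{eq-g0l1assmp} and the fact that the tagged particle's kinetic energy is stochastically dominated along collision trees, this yields moment bounds $\mathbb E^\infty[k^j]\le C_j(T)<\infty$ for every $j$, and likewise for $\mathbb E^N$ uniformly in $N$. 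A truncation at $k\le K(N)$ with $K(N)\to\infty$ slowly then renders the bad-event bounds from the previous paragraph summable and gives the needed tail control.

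Putting the pieces together, I would fix $\eta>0$, choose $K=K(\eta)$ so that both $\mathbb P^\infty(k>K)$ and $\sup_N \mathbb P^N(k>K)$ are below $\eta$, and then estimate $\|\mathbb P^N-\mathbb P^\infty\|_{\mathrm{TV}}$ on trees with $k\le K$ by summing the per-term comparison, which vanishes as $N\to\infty$ since $K$ is fixed. Projecting on the final $(x,v)$-coordinate yields TV convergence $\hat f^N_T\to f_T$. Finally, differentiating the Kolmogorov identity that characterises $\mathbb P^\infty$ in $T$ and integrating out the history produces exactly the weak form of \eqref{eq-linboltz} with collision operator $Q=Q^+-Q^-$, so the limit $f_t$ is the unique mild solution of the linear Boltzmann equation.
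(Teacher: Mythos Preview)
Your overall architecture---lifting to probability measures on collision trees, writing Kolmogorov/Duhamel representations for both the empirical and the idealized (Boltzmann) tree measures, then projecting back to the $(x,v)$-marginal---is exactly the strategy of the paper. Where you diverge is in the \emph{comparison step}, and that is where your sketch has a genuine gap.

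The paper does not do a symmetric term-by-term comparison plus a truncation based on uniform-in-$N$ tightness. Instead it derives a pointwise \emph{one-sided} differential inequality on good trees, yielding
\[
\hat P_t^\varepsilon(\Phi)\ \ge\ \bigl(1-\hat\rho_t^\varepsilon(\Phi)\bigr)\,\zeta(\varepsilon)\,P_t(\Phi),\qquad \Phi\in\mathcal G(\varepsilon),
\]
with $\hat\rho_t^\varepsilon\to 0$ uniformly on $\mathcal G(\varepsilon)$. Crucially, tightness is then needed only for the \emph{idealized} measure $P_t$: one shows $P_t(\mathcal G(\varepsilon))\to 1$, which is a statement about the Boltzmann process alone and follows from the $L^1$ theory for \eqref{eq-linboltz}. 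The two-sided TV bound comes for free from the fact that both $P_t$ and $\hat P_t^\varepsilon$ have total mass $1$: a one-sided lower bound on $S\cap\mathcal G(\varepsilon)$ automatically gives the matching upper bound on the complement.

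Your route instead requires $\sup_N \mathbb P^N(k>K)\to 0$ as $K\to\infty$, i.e.\ tightness of the \emph{empirical} collision count uniformly in $N$. You assert this ``likewise'' but give no argument, and it is not a consequence of the moment bounds you cite: controlling $\mathbb E^N[k^j]$ would require controlling the tagged particle's speed along the true $N$-body dynamics, not just along the idealized process. Your claim that ``the tagged particle's kinetic energy is stochastically dominated along collision trees'' is at best imprecise in this model (energy is \emph{not} conserved, cf.\ the paper's remarks), and in any case a bound of the form $|v(t)|^2\le |v_0|^2+\sum_i|\bar v_i|^2$ has a random number of summands, so it does not close without already knowing something about $k$. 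The paper's one-sided trick is precisely what lets one avoid ever proving this uniform empirical tightness directly.

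In short: your plan is on the right track and would likely be salvageable, but as written the uniform-in-$N$ tail bound is a real hole. The cleaner fix is to replace the symmetric truncation argument by the one-sided inequality plus the ``both are probabilities'' observation, which is what the paper does.
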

\subsection{Remarks}
\begin{enumerate}
\item The reader is reminded that solutions of \eqref{eq-linboltz} only conserve mass, but not energy.
\item
The analysis of the Rayleigh gas can also be done using traditional BBGKY approach. Here one uses the collision
operator in $\mathbb{R}^d$:
\begin{align*}
\mathcal C^{\text{Rayl}}_{s,s+1}f^{(s+1)} (t,Z_s)&=(N-s) \varepsilon^{d-1}  \int_{S^{d-1} \times \mathbb{R}^d} \nu \cdot (v_{s+1}-v_i)\\
 & \qquad  \times f_N^{(s+1)}(t,Z_s,x_1+ \varepsilon  \nu, v_{s+1}) \, \mathrm{d}\nu \, \mathrm{d}v_{s+1}.
\end{align*}
The hard sphere collision operator is given by
\begin{align}\label{eq-Chs}
\mathcal C^{\text{hs}}_{s,s+1} f^{(s+1)}(t,Z_s)&:= (N-s) \varepsilon^{d-1} \sum_{i=1}^s \int_{S^{d-1} \times \mathbb{R}^d} \nu \cdot (v_{s+1}-v_i)
\\ \nonumber
 & \qquad  \times f_N^{(s+1)}(t,Z_s,x_i+ \varepsilon  \nu, v_{s+1}) \, \mathrm{d}\nu \, \mathrm{d}v_{s+1}.
\end{align}
The only difference between $\mathcal C^{\text{Rayl}}_{s,s+1}$ and $\mathcal C^{\text{hs}}_{s,s+1}$ is the fact that
in the hard sphere case one sums over all indices $i=1,\ldots ,s$ and in the Rayleigh case only over $i=1$. This gives estimates on $\mathcal C^{\text{Rayl}}_{s,s+1}$, which are independent of $s$ in the contraction
proof for the mild form of the associated BBGKY hierarchy. Using the function spaces $\bf{X}_{\varepsilon,\beta,\mu}$ with norm
 $ \|.\|_{\varepsilon, \beta,\mu}$  as in \cite[Def 5.1.4]{saintraymond13}
for measurable functions $G: t \in[0,T] \mapsto (t)=(g_s(t))_{s \geq 1} \in \bf{X}_{\varepsilon,\beta,\mu} $ one can introduce another time-dependent
 variant compared to \cite[Def 5.1.4]{saintraymond13}
 \[ \||G|\|_{\varepsilon, \beta,\mu,\lambda}:= \sup_{0 \leq t \leq T} \|G(t)\exp(-\lambda t |v_1|^2)  \|_{\varepsilon, \beta,\mu}. \]
This will lead to a contraction for arbitrary large times $T$. For a slightly different approach assuming only finite moments see \cite[Section II.B]{spohn80}. \\
\item Our method can be used to derive quantitative error estimates at the expense of more complex notation and additional regularity requirements for $f_0$ and $g_0$. In particular, see lemmas \ref{lem-intg0bhtbound} and \ref{lem-emp3} for some quantitative expressions.\\
\item The result should also hold in the case $d=2$ or $d \geq 4$ up to a change in moment assumptions on the initial data and minor changes in estimates and calculations throughout the paper.   \\
\item One could consider a spatially inhomogeneous initial distribution for the background particles $g_0=g_0(x,v)$. This adds a complication to the equations since for example the operator $Q$ in \eqref{eq-linboltz} becomes time-dependent, i.e. $Q_t = Q_t^+-Q^-_t$ with
\[ Q_t^-[f](x,v) = f(x,v)\int_{\mathbb{S}^2} \int_{\mathbb{R}^3}  g_0(x-t\bar{v},\bar{v})[(v-\bar{v})\cdot \nu]_+ \, \mathrm{d}\bar{v} \, \mathrm{d}\nu, \]
and $Q^+_t$ analogous.
Since the operator now depends on the time $t$ this would require evolution semigroup results to echo the semigroup results in \cite{arlotti06}.\\
\item One could also attempt to adapt these results to more complex and involved models. For example a model where each particle has an associated counter and a collision occurs between particle $i$ and $j$ if and only if both counters are less than $k$, in the hope of letting $k$ tend to infinity. The main difficulty here will be that one will need to keep track of the current distribution of the background $g_t$ in contrast to our model where the background has constant with time distribution $g_0$.
\end{enumerate}

\subsection{Method of Proof}
We closely follow the method of \cite{matt12}. That is we study the probability distribution of finding a given history of collisions, a given tree, at time $t$.

Firstly in section \ref{sec:ideal} we prove the main result, theorem \ref{thm-ideq}, which shows that there exists a solution $P_t$ to a Kolmogorov differential equation \eqref{eq-id} and relate this solution to the solution of the linear Boltzmann equation. We show existence by explicitly building a solution on the most simple trees and using this to iteratively build a full solution.

In section \ref{sec:emp} we consider the distribution $\hat{P}_t$ of finding a given history of collisions from our particle dynamics and show by direct calculation that this solves a similar differential equation in theorem~\ref{thm-emp} for sufficiently well controlled (good) trees.

Finally in section \ref{sec:conv}  we prove the main theorem of the paper, theorem~\ref{thm-main}, by proving the convergence between $P_t$ and $\hat{P}_t$ in theorem~\ref{thm-ptphatcomp} and then relating this to $f_t$ and $\hat{f}^N_t$.

\subsection{Tree Set Up}
We construct trees in a similar way to \cite{matt12}.

A tree represents a specific history of collisions. The nodes of the tree are denoted by $m$ and represent particles while the edges, denoted $E$, represent collisions. The root of the tree represents the tagged particle and is marked with the initial position of the tagged particle $(x_0,v_0)\in U \times \mathbb{R}^3$. The child nodes of the root represent background particles that the root collides with and are denoted $(t_j,\nu_j,v_j) \in (0,T]\times \mathbb{S}^2\times \mathbb{R}^3$, where $t_j$ represents the collision time, $\nu_j$ the collision parameter and $v_j$ the incoming velocity of the background particle.  Since the background particles only collide with the root particle and not each other we only consider trees of height at most 1, so the trees simplify to the initial position of the tagged particle and a list of its $n\geq 0$ collisions. The graph structure is mainly suppressed.

\begin{deff}
 The set of collision trees $\mathcal{MT}$ is defined by,
\[ \mathcal{MT}:= \{ (x_0,v_0),(t_1,\nu_1,v_1),\dots,(t_{n},\nu_{n},v_{n}) : n \in \mathbb{N}\cup \{0\}  \}. \]
For a tree $\Phi \in \mathcal{MT}$, $n(\Phi)$ is the number of collisions.

The final collision in a tree $\Phi$ plays a significant role. Define the maximum collision time $\tau(\Phi) \in [0,T]$,
\begin{equation}
\tau(\Phi):=
\begin{cases}
0 & \textrm{ if } n(\Phi) =0, \\
\max_{1\leq j \leq n(\Phi) } t_j & \textrm{ else. }
\end{cases}
\end{equation}
Further for $n(\Phi) \geq 1$ the marker for the final collision is denoted by,
\[ (\tau, \nu,v') := (t_{n(\Phi) },\nu_{n(\Phi) },v_{n(\Phi) } ). \]

\end{deff}

The realisation of a tree $\Phi$ at a time $t\in[0,T]$ for a particle diameter $\varepsilon>0$ uniquely defines the position and velocity of the tagged particle for all times up to $t$ since the initial position and the collisions the root experiences are known. Further it determines the initial positions of the $n(\Phi)$ background particles involved in the tree since we can work backwards from the collision and we know that their velocity does not change. Finally it also includes information about the other $N-n$ background particles, because it is known that they do not interfere with the root up to time $t$.

If the root collides at the instant $t$ denote the pre-collisional velocity by  $v(t^-)$ and the post-collisional velocity by $v(t)$.
Throughout this paper the dependence on $\Phi$ is often dropped from these and other variables when the context is clear.

For $n\geq 1$, define $\bar{\Phi}$ as being the pruned tree of ${\Phi}$ with the node representing the final collision, which occurs at time $\tau$, removed. For example if $\Phi = ((x_0,v_0),(t_1,\nu_1,v_1),(\tau,\nu,v'))$ then $\bar{\Phi} = ((x_0,v_0),(t_1,\nu_1,v_1))$.

\begin{figure}[ht]
\hspace*{\fill}
\subfigure[An example tree with 1 collision]
{
\includegraphics[scale=1]{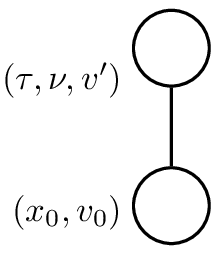}
}\hspace*{\fill}
\subfigure[An example tree with 3 collisions]
{
\includegraphics[scale=1]{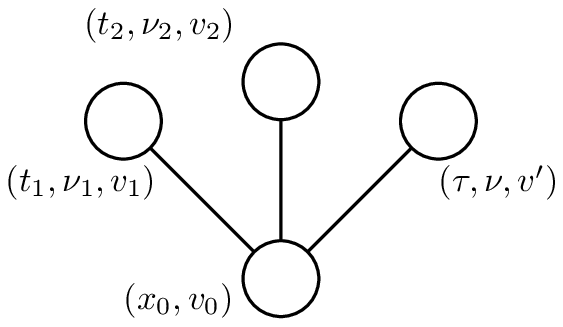}
}
\caption{Two example trees}
\hspace*{\fill}
\end{figure}
Define a metric $\mathrm{d}$ on $\mathcal{MT}$ as follows. For $\Phi,\Psi \in \mathcal{MT}$ with components $\Phi_j$  and $\Psi_j$ respectively.
\begin{equation*}
\textrm{d}(\Phi,\Psi) := \begin{cases}
1 & \textrm{ if } n(\Phi) \neq n(\Psi) \\
\min \left\{ 1,\max_{0\leq j \leq n} |\Phi_j - \Psi_j|_\infty \right\} & \textrm{ else.}
\end{cases}
\end{equation*}
Further denote by $B_h(\Phi)$ the ball of radius $h/2$ around $\Phi \in \mathcal{MT}$,
\begin{equation}\label{eq-balldeff}
 B_h(\Phi) := \{ \Psi \in \mathcal{MT} : \mathrm{d}(\Phi,\Psi) < h/2 \}.
\end{equation}
The standard Lebesgue measure on $\mathcal{MT}$ is denoted by $\mathrm{d}\lambda$.

\section{The Idealized Distribution} \label{sec:ideal}

In this section we show that there exists a solution, denoted $P_t$, to the idealized equation, equation \eqref{eq-id}, and relate this solution to the solution of the linear Boltzmann equation. We prove existence by constructing a solution iteratively on different sized trees. In section~\ref{sec:conv} $P_t$ is compared to the solution of a similar evolution equation defined by the particle dynamics in order to show the required convergence. The idealized system plays the same role as the Boltzmann hierarchy in \cite{saintraymond13}.

For a given tree $\Phi \in \mathcal{MT}$, $P_0(\Phi)$ is zero unless $\Phi$ involves no collisions, in which case $P_0(\Phi)$ is given in terms of the initial distribution $f_0$. $P_t(\Phi)$ remains zero until $t=\tau$ when there is an instantaneous increase to a positive value depending on $P_\tau(\bar{\Phi})$ and the final collision in $\Phi$. For $t>\tau$, $P_t(\Phi)$ decreases at a rate that is obtained by considering all possible collisions.

The idealized equation is given by,
\begin{equation} \label{eq-id}
\begin{cases}
\partial _t P_t (\Phi) & = \mathcal{Q}_t[P_t](\Phi) = \mathcal{Q}_t^+ [P_t](\Phi) - \mathcal{Q}^{-}_t[P_t](\Phi), \\ P_0(\Phi) & = f_0 (x_0,v_0)\mathbbm{1}_{n(\Phi)=0},
\end{cases}
\end{equation}
where,
\begin{equation}
\mathcal{Q}_t^+ [P_t](\Phi): = \left\{\begin{array}{cc}\delta(t-\tau)P_t (\bar{\Phi})g_0(v')[(v(\tau^-)-v')\cdot \nu]_+&\mbox{if } n(\Phi)\geq 1,
\\ 0&\mbox{if } n(\Phi)=0,\end{array}\right.
\end{equation}

\begin{equation}
\mathcal{Q}^{-}_t[P_t](\Phi): = P_t(\Phi) \int_{\mathbb{S}^{2}} \int_{\mathbb{R}^3} g_0(\bar{v})[(v(\tau)-\bar{v})\cdot \nu ]_+ \, \mathrm{d}\bar{v} \, \mathrm{d}\nu.
\end{equation}
\begin{thm} \label{thm-ideq}
Suppose that $f_0$ and $g_0$ are admissible (in the sense
of Def.~\ref{admissible}). Then there exists a solution $P:[0,T] \to L^1(\mathcal{MT})$ to the idealized equation, \eqref{eq-id}. Moreover for any $t\in[0,T]$ and for any $\Omega \subset U \times \mathbb{R}^3  $ define $$S_t(\Omega):=\{ \Phi \in \mathcal{MT} : (x(t),v(t)) \in \Omega \}.$$ Then
\[ \int_\Omega f_t(x,v) \, \mathrm{d}x \, \mathrm{d}v = \int_{S_t(\Omega)} P_t(\Phi) \, \mathrm{d}\Phi,\]
where $f_t$ is the unique mild solution of the linear Boltzmann equation given in proposition \ref{prop-linboltz}.
\end{thm}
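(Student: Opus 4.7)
The plan is to construct $P_t$ stratum-by-stratum on $\mathcal{MT}_n := \{\Phi : n(\Phi) = n\}$ by induction on $n$. On $\mathcal{MT}_0$ the idealized equation reduces to the linear ODE $\partial_t P_t(\Phi) = -\mu(v_0) P_t(\Phi)$ with
$$\mu(v) := \int_{\mathbb{S}^2 \times \R^3} g_0(\bar v)[(v-\bar v)\cdot\nu]_+ \, \mathrm{d}\bar v\,\mathrm{d}\nu,$$
giving $P_t(\Phi) = f_0(x_0,v_0)e^{-t\mu(v_0)}$. For $n \geq 1$, the Dirac mass in $\mathcal{Q}^+_t$ forces $P_t(\Phi) = 0$ for $t < \tau$ and produces a jump of size $P_\tau(\bar\Phi)g_0(v')[(v(\tau^-)-v')\cdot\nu]_+$ at $t = \tau$, which then decays at rate $\mu(v(\tau))$. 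Iterating yields the explicit product formula
$$P_t(\Phi) = f_0(x_0,v_0)\,\mathbbm{1}_{t \geq \tau}\prod_{j=1}^n g_0(v_j)[(v(t_j^-)-v_j)\cdot\nu_j]_+ \prod_{j=0}^n \exp\!\left(-\bigl((t_{j+1}\wedge t) - t_j\bigr)\mu(v(t_j))\right),$$
with $t_0 := 0$, $t_{n+1} := +\infty$, and $v(t_j^-) = v(t_{j-1})$. By construction this is non-negative and solves \eqref{eq-id} in the distributional sense on each stratum.

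Next I would verify $P_t \in L^1(\mathcal{MT})$ and in fact
$$\sum_n \int_{\mathcal{MT}_n} P_t(\Phi)\,\mathrm{d}\lambda(\Phi) = \int_{U\times\R^3} f_0(x,v)\,\mathrm{d}x\,\mathrm{d}v.$$
The $(\nu_j, v_j)$ integrations against $g_0(v_j)[(v(t_j^-)-v_j)\cdot\nu_j]_+$ reproduce exactly $\mu(v(t_j^-))$, and the remaining $t_j$-integrations combine with the exponential factors to produce a telescoping expression. The admissibility assumptions are used to control $\mu$: \eqref{eq-g0l1assmp} and \eqref{eq-g0linf5} together yield $\mu(v) \leq C(1+|v|)$, while the $(1+|v|^2)$-integrability of $f_0$ and the $(1+|v|^4)$ bound on $g_0$ furnish the moment bounds on post-collisional velocities needed to justify the uniformity of the sum over $n$.

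Finally I would identify $F_t(x,v) := \int_{S_t(\{(x,v)\})} P_t(\Phi)\,\mathrm{d}\Phi$ (as a density) with the mild solution of \eqref{eq-linboltz}. Stratifying by the time $\tau$ of the last collision yields a Duhamel decomposition
$$F_t(x,v) = e^{-t\mu(v)} f_0(x-tv,v) + \int_0^t e^{-(t-s)\mu(v)} \mathcal{G}_s(x-(t-s)v,v)\,\mathrm{d}s,$$
where $\mathcal{G}_s$ collects all contributions with a last collision at time $s$ producing outgoing velocity $v$. The hard-sphere change of variables relating pre- and post-collisional triples $(v(s^-), v', \nu)$ maps the integrand of $\mathcal{G}_s$ onto the gain kernel $Q^+[F_s]$, so that $F_t$ solves the mild form of the linear Boltzmann equation. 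Uniqueness (Proposition~\ref{prop-linboltz}) yields $F_t = f_t$, and integrating against $\mathbbm{1}_\Omega$ gives the claimed identity.

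The main obstacle is the simultaneous handling of the Dirac-mass gain term in the idealized equation and the unbounded collision rate $\mu(v)$ along trees with arbitrarily many collisions. Making the inductive construction and the $L^1$ estimate rigorous—in particular justifying the exchange of the sum over $n$ with the tree integration and the change of variables in the $Q^+$ identification—is the technical heart of the argument, and is precisely where the admissibility hypotheses \eqref{eq-f0assmp2}--\eqref{eq-g0linf5} enter decisively.
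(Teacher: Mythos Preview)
Your construction of $P_t$ via the explicit product formula and the verification that it solves \eqref{eq-id} stratum-by-stratum is exactly what the paper does (Definition~\ref{deff-ptfull} and the differentiation at the end of the proof of Theorem~\ref{thm-ideq}). Likewise, your identification of the phase-space marginal $F_t$ with the Boltzmann solution via a Duhamel/last-collision decomposition is the same idea the paper implements through the functions $P^{(j)}_t$ (Proposition~\ref{prop-pj}), which are shown to satisfy $P^{(j)}_t = \int_0^t T(t-\theta) B P^{(j-1)}_\theta\,\mathrm{d}\theta$ and then summed to a mild solution of \eqref{eq-linboltz}.

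The one place where your sketch diverges from the paper, and where it is genuinely thin, is the $L^1$ bound and the claim that the total mass equals $\int f_0$. You write that the $(\nu_j,v_j)$-integrations reproduce $\mu(v(t_j^-))$ and that the $t_j$-integrations then telescope. This does not work as stated: once you integrate out $(\nu_n,v_n)$, the post-collisional velocity $v(t_n)$---and hence the final decay factor $e^{-(t-t_n)\mu(v(t_n))}$---depends on those very variables, so you cannot peel off collisions one at a time to get a clean telescoping sum. The obstruction is precisely the non-explosion question for the underlying velocity jump process, and since $\mu(v)$ grows like $|v|$ while the root velocity can grow at each collision, this is not automatic. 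The paper does not attempt a direct combinatorial argument here; instead it invokes the \emph{honesty} of the semigroup generated by $\overline{A+B}$ (obtained from \cite[Theorem~10.28]{arlotti06} via the kernel estimate of Lemma~\ref{lem-mya7}, which is where \eqref{eq-g0linf5} enters), together with the pointwise bound $P^{(j)}_t \le f_t$ (Lemma~\ref{lem-p0leqft} and its inductive extension). Honesty is exactly the statement that the Dyson series has no mass defect, i.e.\ $\sum_j P^{(j)}_t = f_t$, which simultaneously gives $P_t \in L^1(\mathcal{MT})$ and the identification with $f_t$. You correctly flag this step as the technical heart, but your proposal underestimates what is needed: a bare telescoping will not close, and something equivalent to the honesty result must be supplied.
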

\begin{rmk}
Condition \eqref{eq-g0linf5} can be relaxed to
\begin{equation}\label{eq-g0l4eta}
\esssup_{v\in\mathbb{R}^3} g_0(v)(1+|v|^{3+\eta}) < \infty
\end{equation}
for some $\eta > 0$.
\end{rmk}
From now on assume that $f_0$ and $g_0$ are admissible with the provision that either \eqref{eq-g0linf5} or \eqref{eq-g0l4eta} holds. We prove the existence by construction, taking several steps to build a solution by solving on the most simple trees first and using this solution to iteratively build a full solution.
We begin by solving the linear Boltzmann equation. We establish existence, uniqueness and regularity of solutions of \eqref{eq-linboltz} by adapting
methods from semigroup theory. The difficulty here is that after writing the linear Boltzmann equation as the sum of two unbounded operators we need to ensure that a honest semigroup is generated in order to prove existence and uniqueness. Next we adapt these semigroup techniques to define functions $P_t^{(j)}$ that describe the distribution of finding the tagged particle such that it has experienced $j$ collisions. This is key to connecting $P_t$ to the solution of the linear Boltzmann.

The following notion of mild solution suitable for transport equations is used (c.f.
\cite[Def 3.1.1]{arendt11})
\begin{deff} \label{deff-mildsol}
Consider the following system,
\begin{equation}
\begin{cases} \label{eq-mildsol}
\partial_t u(t)& = Lu(t), \\
u(0)& =u_0.
\end{cases}
\end{equation}
Where $L: D(L) \subset  L^1(U\times \mathbb{R}^3)  \to L^1(U\times \mathbb{R}^3)$ is an operator and $u_0 \in L^1(U\times \mathbb{R}^3)$ is given. The function $u:[0,T]\to U \times \mathbb{R}^3$ is called a mild solution of \eqref{eq-mildsol}  if for all $t\geq 0$,
\[ \int_0^t u(\theta)\, \mathrm{d}\theta \in D(L) \, \textrm{ and } \, L\int_0^t u(\theta)\, \mathrm{d}\theta = u(t)-u_0. \]
\end{deff}
We split the right hand side of \eqref{eq-linboltz}
into two operators, $A$ and $B$. These will appear in the construction of $P_t$.
\begin{deff} \label{deff-aoperator}
Define $D(A), D(B)\subset L^1(U \times \mathbb{R}^3)$ by,
\begin{align*}
 D(A)&:=\{ f \in L^1(U \times \mathbb{R}^3) : v\cdot \partial_x f(x,v) + Q^-[f](x,v) \in L^1(U \times \mathbb{R}^3)  \},\\
 D(B)&:= \{ f \in L^1(U\times \mathbb{R}^3) :  Q^+[f] \in  L^1(U\times \mathbb{R}^3)  \}.
\end{align*}
Then define $A:D(A) \to L^1(U \times \mathbb{R}^3)$ and $B:D(B) \to L^1(U \times \mathbb{R}^3)$ by,
\begin{align}\label{eq-adeff}
(Af)(x,v) &:= -v\cdot \partial_x f(x,v) - Q^-[f](x,v),\\
\label{eq-bdeff}
(Bf)(x,v) &:= Q^+[f](x,v).
\end{align}
\end{deff}

\begin{prop} \label{prop-linboltz}
Suppose that the assumptions in theorem~\ref{thm-ideq} hold. Then there exists a unique mild solution $f:[0,T] \to L^1(U \times \mathbb{R}^3)$ to \eqref{eq-linboltz}. Furthermore $f_t$ remains non-negative and of mass $1$, and
\begin{eqnarray}
\label{lem-ftmom}
\int_{U\times \mathbb{R}^3} f_t(x,v)(1+|v|) \, \mathrm{d}x \, \mathrm{d}v < \infty,\\
f_t\in D(B) \label{lem-ftindb}
 \end{eqnarray}
hold for all $t\in [0,T]$.
\end{prop}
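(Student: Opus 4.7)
The plan is to apply honest positive-perturbation theory for $C_0$-semigroups on $L^1$, in the spirit of \cite{arlotti06}, and then extract the quantitative bounds \eqref{lem-ftmom} and \eqref{lem-ftindb} from the resulting mild formulation.

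The first step is to check that $A$ generates a strongly continuous, positive contraction semigroup $(T_A(t))_{t\ge 0}$ on $L^1(U\times\R^3)$. The collision frequency
\[ \lambda(v) := \int_{\mathbb{S}^{2}}\int_{\R^3} g_0(\bar v)\,[(v-\bar v)\cdot\nu]_+ \,\dd\bar v\,\dd\nu \]
is a locally integrable multiplier bounded by $C(1+|v|)$ by \eqref{eq-g0l1assmp}, so $A$ is the generator of the explicit transport-with-absorption semigroup $(T_A(t)f)(x,v)=e^{-t\lambda(v)}f(x-tv,v)$, interpreted on the torus $U$. Next, $B$ is manifestly positive, $D(A)\subset D(B)$, and the standard unit-Jacobian change of variables $(v,\bar v)\mapsto(v',\bar v')$ combined with the $\nu\mapsto-\nu$ symmetry of $\mathbb{S}^2$ yields the balance identity
\[ \int_{U\times\R^3}(Af+Bf)(x,v)\,\dd x\,\dd v = 0 \qquad \text{for every } f\in D(A),\ f\ge 0. \]
This is precisely the hypothesis of Kato's theorem in $L^1$, and its honest version (see \cite{arlotti06}) upgrades the Kato minimal semigroup to a strongly continuous, positive, stochastic (mass-preserving) $C_0$-semigroup $(T(t))_{t\ge 0}$ with generator $\overline{A+B}$. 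Setting $f_t:=T(t)f_0$ produces a mild solution in the sense of Definition~\ref{deff-mildsol}; uniqueness, non-negativity and mass conservation are inherited from the semigroup.

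The quantitative estimates are then relatively routine. For \eqref{lem-ftmom} I would test the Dyson--Phillips partial sums against the weight $1+|v|$: using $|v'|\le|v|+|v-\bar v|$ after the gain-term change of variables, together with \eqref{eq-g0l1assmp}, one obtains
\[ \tfrac{\dd}{\dd t}\int_{U\times\R^3} f_t(x,v)(1+|v|)\,\dd x\,\dd v \;\le\; C\int_{U\times\R^3} f_t(x,v)(1+|v|)\,\dd x\,\dd v + C'\|f_0\|_{L^1}, \]
which is rigorously justified on truncated partial sums and passed to the limit by monotone convergence; Gronwall delivers \eqref{lem-ftmom} on $[0,T]$. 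For \eqref{lem-ftindb}, the $L^1$-balance between $Q^+$ and $Q^-$ gives
\[ \|Q^+[f_t]\|_{L^1} = \int_{U\times\R^3} f_t(x,v)\,\lambda(v)\,\dd x\,\dd v \;\le\; C\int_{U\times\R^3}f_t(x,v)(1+|v|)\,\dd x\,\dd v, \]
which is finite by \eqref{lem-ftmom}, hence $f_t\in D(B)$ for all $t\in[0,T]$.

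The principal obstacle is the honesty step, i.e.\ ruling out loss of mass at $|v|\to\infty$ when passing from Kato's minimal semigroup to the full semigroup. Since the Rayleigh collision does not conserve kinetic energy one cannot invoke a second-moment barrier as in the classical Boltzmann setting; honesty must be extracted directly from the linear growth $\lambda(v)=O(1+|v|)$ and the exact $L^1$ balance between $Q^+$ and $Q^-$, which is precisely the framework in which the Arlotti--Banasiak honesty theorem operates. The stronger assumption \eqref{eq-g0linf5} is not used at this stage; it will enter only when pointwise control of the collision operator is required in later sections.
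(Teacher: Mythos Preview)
Your proposal follows the same overall strategy as the paper---substochastic perturbation theory on $L^1$ from \cite{arlotti06}---but there is a genuine gap in the honesty step, and a secondary difference in how the moment bound is obtained.

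\medskip

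\textbf{The honesty gap.} You correctly identify honesty as the principal obstacle, but then assert that it follows from the linear growth $\lambda(v)=O(1+|v|)$ together with the exact $L^1$ balance $\int(Af+Bf)=0$, and that the pointwise decay assumption \eqref{eq-g0linf5} is not used at this stage. This is not correct. The balance identity places you in the Kato framework and yields a \emph{substochastic} minimal semigroup; it does not by itself rule out mass loss at $|v|\to\infty$. The paper establishes honesty by invoking a concrete criterion, Theorem~10.28 of \cite{arlotti06}, which requires rewriting $Q^+$ via Carleman's representation with kernel $k(v,v_*)$ and then verifying the uniform bound
\[
\int_{|v|>V} k(v,v_*)\,\dd v \le C \quad\text{for a.e.\ } |v_*|\le V.
\]
This estimate (Lemma~\ref{lem-mya7} in the paper) is precisely where the decay hypothesis \eqref{eq-g0linf5} (or its relaxation \eqref{eq-g0l4eta}) enters: one needs $\int_0^\infty r\,h(r^2)\,\dd r<\infty$ with $h(r)=\int_r^\infty z\sup_{|v|=z}g_0(v)\,\dd z$, and this fails without a pointwise bound on $g_0$ of order $|v|^{-3-\eta}$. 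So your claim that \eqref{eq-g0linf5} plays no role here is a concrete error, and your honesty argument as written is incomplete.

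\medskip

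\textbf{The moment bound.} Your Gronwall-on-Dyson--Phillips route is different from the paper's. The paper uses the Duhamel-type identity $f_t=T(t)f_0+\int_0^t G(t-\theta)BT(\theta)f_0\,\dd\theta$ (valid for $f_0\in D(A)$ by Corollary~5.8 of \cite{arlotti06}), bounds it directly using the second-moment hypothesis \eqref{eq-f0assmp2} on $f_0$, and then extends to general admissible $f_0$ by a density and contradiction argument. Your displayed differential inequality, as written, does not close at first-moment level: after the change of variables the gain term produces an extra factor $|v-\bar v|$, so the right-hand side involves $\int f_t(1+|v|)^2$ rather than $\int f_t(1+|v|)$. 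This is reparable---the second-moment assumption on $f_0$ is available---but the inequality you wrote is not the one that works.

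The argument for \eqref{lem-ftindb} via the $L^1$ balance between $Q^+$ and $Q^-$ is the same as the paper's.
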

\begin{proof}
	See section~\ref{sec:aux}.
\end{proof}

\begin{prop} \label{prop-p0}
There exists a unique mild solution, $P^{(0)}:[0,T] \to L^1( U \times \mathbb{R}^3)$, to the following evolution equation,
\begin{equation} \label{eq-p0}
\begin{cases}
\partial_t P^{(0)}_t (x,v) & = (AP^{(0)}_t)(x,v), \\
 P^{(0)}_0 (x,v) & = f_0(x,v).
\end{cases}
\end{equation}
Where $A$ is as in \eqref{eq-adeff}.
\end{prop}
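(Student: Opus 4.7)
The plan is to solve \eqref{eq-p0} via the method of characteristics, write down an explicit formula for $P^{(0)}_t$, and then identify this formula with a strongly continuous contraction semigroup on $L^1(U \times \mathbb{R}^3)$ whose generator agrees with $A$. Define the collision frequency
\[ R(v) := \int_{\mathbb{S}^2}\int_{\mathbb{R}^3} g_0(\bar v)\,[(v-\bar v)\cdot \nu]_+ \, \mathrm{d}\bar v \, \mathrm{d}\nu, \]
so that $Q^-[f](x,v) = R(v)\,f(x,v)$; admissibility \eqref{eq-g0l1assmp} gives $R(v) \leq C(1+|v|)$ and in particular $R(v) < \infty$ for a.e.\ $v$. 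Since the characteristics of $-v \cdot \partial_x$ are the lines $x \mapsto x + tv$ (read modulo $U$), integrating the associated transport ODE with multiplicative damping $-R(v)$ along them suggests the candidate
\[ P^{(0)}_t(x,v) := f_0(x-tv,\,v)\,\exp\bigl(-t R(v)\bigr). \]

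I would first verify that $(S(t))_{t \geq 0}$ defined by $(S(t)f)(x,v) := f(x-tv,v)\,e^{-tR(v)}$ is a strongly continuous contraction semigroup on $L^1(U\times\mathbb{R}^3)$. Contractivity follows from the change of variables $x \mapsto x - tv$, which preserves Lebesgue measure on the torus $U$, combined with $e^{-tR(v)} \leq 1$. The semigroup identity $S(t+s) = S(t)S(s)$ is immediate from additivity of translations and of the exponent. Strong continuity follows by combining the classical strong continuity of the translation semigroup on $L^1$ with dominated convergence applied to the multiplier $e^{-tR(v)}$, which is bounded by $1$ and converges pointwise to $1$ as $t \to 0$. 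A direct computation on a core of smooth compactly supported functions shows the generator acts as $-v\cdot \partial_x f - R(v) f = Af$, and one checks that the maximal $L^1$ domain of this generator coincides with $D(A)$ of Definition \ref{deff-aoperator}.

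Given these facts, standard $C_0$-semigroup theory (c.f.\ \cite[Ch.~3]{arendt11}) yields both existence and uniqueness of the mild solution: setting $P^{(0)}_t := S(t) f_0$, one has $\int_0^t P^{(0)}_\theta \, \mathrm{d}\theta \in D(A)$ and $A \int_0^t P^{(0)}_\theta \, \mathrm{d}\theta = P^{(0)}_t - f_0$, matching Definition \ref{deff-mildsol}; and any two mild solutions differ by a mild solution with vanishing initial data, which must be identically zero. The main obstacle is that the multiplier $R(v)$ is unbounded in $v$, so one cannot combine the pure transport semigroup with $-Q^-$ via a standard bounded-perturbation theorem; this is precisely what makes the explicit characteristic formula above the cleanest route, since it absorbs the loss term into the evolution and leaves only the elementary semigroup verifications.
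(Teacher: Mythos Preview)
Your proposal is correct and follows essentially the same route as the paper: both identify the explicit semigroup $(T(t)f)(x,v)=e^{-tR(v)}f(x-tv,v)$ and then invoke standard $C_0$-semigroup theory (the paper via \cite[Thm~3.1.12]{arendt11}) to obtain the unique mild solution $P^{(0)}_t=T(t)f_0$. The only difference is that the paper delegates the verification that $A$ generates this semigroup to Lemma~\ref{lem-tsemi}, which in turn cites \cite[Thm~10.4]{arlotti06}, whereas you carry out the contractivity, strong continuity, and generator computations by hand.
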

The distribution $P^{(0)}_t(x,v)$ can be thought of as the probability of finding the tagged particle at $(x,v)$ at time $t$ such that it has not yet experienced any collisions.
\begin{proof}
By lemma~\ref{lem-tsemi} $A$ generates the substochastic $C_0$ semigroup $T(t)$ given in \eqref{eq-tsemi}. By the Hille-Yoshida theorem,  \cite[Thm 1.3.1]{pazy83} $A$ is closed. By  \cite[Thm 3.1.12]{arendt11} \eqref{eq-p0} has a unique mild solution given by
\begin{equation} \label{eq-pt0ttf0}
P_t^{(0)}=T(t)f_0.
\end{equation}
\end{proof}
\begin{lem} \label{lem-p0leqft}
For all $t \in [0,T]$, $P_t^{(0)} \leq f_t$ pointwise.
\end{lem}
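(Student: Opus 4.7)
The plan is to write $f_t - P_t^{(0)}$ as a Duhamel integral against the nonnegative substochastic semigroup $T(t)$ generated by $A$, with an integrand that is nonnegative because $f_t \geq 0$ and $Q^+$ maps nonnegative functions to nonnegative functions.

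First I would note that by Proposition \ref{prop-linboltz} the linear Boltzmann equation \eqref{eq-linboltz} rewrites as $\partial_t f_t = A f_t + B f_t$ in the mild sense, with $f_0$ as initial data, where $A$ and $B$ are as in Definition \ref{deff-aoperator}. Since $f_t \in D(B)$ for all $t$ (by \eqref{lem-ftindb}) and since $T(t)$ is the $C_0$-semigroup generated by $A$ from Lemma \ref{lem-tsemi}, the mild-solution theory (cf. \cite[Thm.~3.1.12]{arendt11}) gives the variation-of-constants formula
\begin{equation*}
f_t \;=\; T(t) f_0 \;+\; \int_0^t T(t-s)\, B f_s\, \mathrm{d}s.
\end{equation*}
Combining with the representation $P_t^{(0)} = T(t) f_0$ from \eqref{eq-pt0ttf0}, we obtain
\begin{equation*}
f_t - P_t^{(0)} \;=\; \int_0^t T(t-s)\, Q^+[f_s]\, \mathrm{d}s.
\end{equation*}

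Next I would show the integrand is pointwise nonnegative. By Proposition \ref{prop-linboltz}, $f_s \geq 0$ for all $s$, and $Q^+[f_s](x,v)$ is an integral of $f_s(x,v') g_0(\bar{v}')$ against a nonnegative kernel $[(v-\bar{v})\cdot\nu]_+$, hence $Q^+[f_s] \geq 0$ a.e. Since $T(t-s)$ is substochastic (from Lemma \ref{lem-tsemi}) it is in particular a positive operator on $L^1(U\times\mathbb{R}^3)$, so $T(t-s) Q^+[f_s] \geq 0$. Integrating in $s$ preserves nonnegativity, so $f_t - P_t^{(0)} \geq 0$ pointwise a.e., as desired.

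The main subtlety is that the above Duhamel identity must be justified for the mild solutions produced in Proposition \ref{prop-linboltz}, rather than via a formal termwise differentiation. Concretely, one applies the definition of mild solution (Definition \ref{deff-mildsol}) to both $f_t$ (as solution of $\partial_t u = (A+B)u$) and to $P_t^{(0)}$ (as solution of $\partial_t u = Au$), subtracts, and uses the closedness of $A$ together with $\int_0^t Bf_s\, \mathrm{d}s \in L^1$ (which follows from \eqref{lem-ftmom} and the structure of $Q^+$) to move $A$ through the integral. Once this is done, the positivity argument above concludes the proof.
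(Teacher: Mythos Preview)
Your proof is correct and follows essentially the same route as the paper: both arrive at the Duhamel representation $f_t - P_t^{(0)} = \int_0^t T(t-s)\,B f_s\,\mathrm{d}s$ and then invoke positivity of $f_s$, of $B=Q^+$, and of the substochastic semigroup $T$. The only cosmetic difference is that the paper names the difference $F_t^{(0)}:=f_t-P_t^{(0)}$, identifies it as the mild solution of $\partial_t F = AF + Bf_t$ with zero initial data, and cites \cite[Prop.~3.1.16]{arendt11} for the integral formula, whereas you write the variation-of-constants formula for $f_t$ itself and subtract $P_t^{(0)}=T(t)f_0$; the reference you want for that step is likewise Prop.~3.1.16 rather than Thm.~3.1.12.
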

\begin{rmk}
This lemma is entirely expected. The probability of finding the tagged particle at $(x,v)$ at time $t$ is given by $f_t(x,v)$ and the probability of finding it at $(x,v)$ at time $t$ such that it has not experienced any collisions up to time $t$ is given by $P_t^{(0)}(x,v)$ so one expects $P_t^{(0)} \leq f_t$.
\end{rmk}
\begin{proof}
For $t \in [0,T]$ define $F_t^{(0)}:=f_t-P_t^{(0)}$. Then since $f_t$ and $P_t^{(0)}$ are mild solutions of \eqref{eq-linboltz} and \eqref{eq-p0} respectively, $F_t^{(0)}$ is a mild solution of
\begin{equation*}
\begin{cases}
\partial_t F_t^{(0)}(x,v) & = AF_t^{(0)} + Bf_t(x,v) \\
F_0^{(0)}(x,v) & = 0.
\end{cases}
\end{equation*}
By \eqref{lem-ftindb} and \cite[Prop. 3.1.16]{arendt11} $F_t^{(0)}$ is given by,
\[ F_t^{(0)} = \int_0^t T(t-\theta)Bf_\theta \, \mathrm{d}\theta.  \]
Now noting that $f_\theta$ is non-negative it follows that $Bf_\theta$ and hence $T(t-\theta)Bf_\theta$ are non-negative also. Hence $F_t^{(0)} \geq 0$ which implies $P_t^{(0)} \leq f_t$.
\end{proof}

\begin{deff} \label{deff-ptfull}
For $j \in \mathbb{N} \cup \{0\}$ denote by $\mathcal{T}_j$ the set of all trees with exactly $j$ collisions. Explicitly,
\begin{equation} \label{eq-tjdeff}
\mathcal{T}_j := \{ \Phi \in \mathcal{MT} : n(\Phi) = j \}.
\end{equation}
The required solution $P_t$ can now be defined iteratively on the space $\mathcal{MT}$. For $\Phi \in \mathcal{T}_0$ define
\begin{equation} \label{eq-ptt0}
 P_t(\Phi) := P^{(0)}_t (x(t),v(t)).
\end{equation}
Else define,
\begin{align} \label{eq-ptfull}
P_t(\Phi): = \mathbbm{1}_{t \geq \tau} \exp \left( -(t-\tau) \int_{\mathbb{S}^2} \int_{\mathbb{R}^3} g_0(\bar{v}) [(v(\tau)-\bar{v})\cdot \nu']_+ \, \mathrm{d}\bar{v} \, \mathrm{d}\nu'  \right) \nonumber \\ P_\tau (\bar{\Phi})g_0(v')[(v(\tau^-)-v')\cdot \nu]_+.
\end{align}
The right hand side of this equation depends on $P_\tau (\bar{\Phi})$ but since $\bar{\Phi}$ has degree exactly one less than $\Phi$ the equation is well defined.
\end{deff}

The proof that $P_t$ has the required properties of theorem~\ref{thm-ideq} is given shortly. We first define the function $P^{(j)}_t$ which is thought of, in parallel to $P^{(0)}_t$, as being the probability of finding the tagged particle at a certain position at time $t$ such that it has experienced exactly $j$ collisions up to time $t$. The $P_t^{(j)}$ will be required to show the connection between $P_t$ and the solution of the linear Boltzmann equation.
\begin{deff}\label{def-pj}
Let $t\in [0,T]$ and $\Omega \subset U \times \mathbb{R}^3$ be measurable. Recall in theorem \ref{thm-ideq} we define the set, $S_t(\Omega)=\{ \Phi \in \mathcal{MT} : (x(t),v(t))\in \Omega \}$ - the set of all trees such that the tagged particle at time $t$ is in $\Omega$. Define for all $j \in \mathbb{N}\cup \{0\}$,
\[ S^j_t(\Omega):=\mathcal{T}_j \cap S_t(\Omega) . \]
Then for $j \geq 1$, define $P^{(j)}(\Omega)$ by,
\[ P^{(j)}_t(\Omega):= \int_{ S^j_t(\Omega) } P_t(\Phi) \, \mathrm{d}\Phi. \]
\end{deff}
\begin{lem} \label{lem-Ptjmble}
Let $t\in [0,T]$, $j \geq 1$. Then $P_t^{(j)}$ is absolutely continuous with respect to the Lebesgue measure on $U \times \mathbb{R}^3$.
\end{lem}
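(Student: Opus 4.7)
The plan is to exhibit $P^{(j)}_t$ explicitly as the integral of a non-negative $L^1$ function against Lebesgue measure on $U\times \R^3$; absolute continuity follows immediately. The approach is to unroll the recursion \eqref{eq-ptfull} and then make measure-preserving changes of variables so that $(x(t),v(t))$ become integration coordinates.

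First I would unroll the definition. Setting $t_0:=0$, $t_{j+1}:=t$, $v(t_0):=v_0$, and
\[
E_k := \int_{\mathbb{S}^2\times \R^3} g_0(\bar v)\,[(v(t_k)-\bar v)\cdot \nu']_+ \,d\bar v\,d\nu',
\]
iterating \eqref{eq-ptfull} $j$ times gives, for every $\Phi = ((x_0,v_0),(t_1,\nu_1,v_1),\dots,(t_j,\nu_j,v_j)) \in \mathcal{T}_j$,
\[
P_t(\Phi) = \mathbbm{1}_{0\le t_1\le\dots\le t_j\le t}\, P^{(0)}_{t_1}(x_0+t_1 v_0, v_0) \prod_{k=1}^{j} e^{-(t_{k+1}-t_k)E_k} \, g_0(v_k)\,[(v(t_{k-1})-v_k)\cdot \nu_k]_+,
\]
where $v(t_k) = v(t_{k-1}) - \nu_k\bigl(\nu_k\cdot(v(t_{k-1})-v_k)\bigr)$.

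Next, treating the $t_k$'s and $\nu_k$'s as parameters, I would perform two substitutions in the integral $P^{(j)}_t(\Omega) = \int_{S^j_t(\Omega)} P_t(\Phi)\,d\Phi$. The substitution $x_0 \mapsto x(t) = x_0 + t_1 v_0 + \sum_{k=1}^{j-1}(t_{k+1}-t_k) v(t_k) + (t-t_j) v(t_j)$ is a translation on the torus $U$ with unit Jacobian. Then, for $k = 1, \dots, j$ in order, I would replace $(v(t_{k-1}), v_k)$ by $(v(t_k), \tilde v_k)$, where $\tilde v_k$ has $\nu_k$-parallel component equal to that of $v(t_{k-1})$ and tangential component equal to that of $v_k$. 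In a basis adapted to $\nu_k$ this is a coordinate permutation swapping two scalar components, hence an isometry of $\R^3\times \R^3$. After all substitutions the integration variables become $(x(t),v(t),\tilde v_1,\dots,\tilde v_j,t_1,\dots,t_j,\nu_1,\dots,\nu_j)$ and the integrand depends on $(x(t),v(t))$ only through the indicator $\mathbbm{1}_{(x(t),v(t))\in\Omega}$. Tonelli then gives
\[
P^{(j)}_t(\Omega) = \int_\Omega h_j(x,v)\,dx\,dv
\]
for a non-negative measurable marginal $h_j$, whose $L^1$ norm equals $\int_{\mathcal{T}_j} P_t(\Phi)\,d\Phi$ and is finite by the admissibility estimates \eqref{eq-f0assmp2}--\eqref{eq-g0linf5}, the bound $\|P^{(0)}_{t_1}\|_{L^1}\le 1$ provided by Proposition~\ref{prop-p0}, and $[(v(t_{k-1})-v_k)\cdot \nu_k]_+ \le |v(t_{k-1})|+|v_k|$.

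The main obstacle is the bookkeeping for the $j$ successive velocity substitutions: one must verify that, performed in sequence in bases adapted to different $\nu_k$'s, they compose into a genuine measure-preserving bijection $(v_0,v_1,\dots,v_j) \leftrightarrow (v(t_j), \tilde v_1, \dots, \tilde v_j)$. Each individual step is an isometry in its own $\nu_k$-frame, but confirming that $v(t_{k-1})$, newly introduced at step $k-1$, remains a free variable at step $k$, and that no hidden Jacobian factors arise when composing the substitutions, requires a short induction. Once this is handled, the rest of the proof is routine Fubini together with the moment estimates already used throughout this section.
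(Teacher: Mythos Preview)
Your proposal is correct and follows essentially the same approach as the paper: both arguments write out $P^{(j)}_t(\Omega)$ from the recursion and then perform a measure-preserving change of variables that turns $(x_0,v_0,v_1,\dots,v_j)$ into $(x(t),v(t),\tilde v_1,\dots,\tilde v_j)$, so that the indicator $\mathbbm{1}_{(x(t),v(t))\in\Omega}$ can be pulled outside as an integral over $\Omega$. The paper carries out the $j=1$ case explicitly with a single Jacobi matrix computation and then says ``for $j\ge 2$ we use a similar approach,'' whereas you factor the transformation as a translation composed with $j$ elementary velocity swaps and sketch the induction; your description of the $\tilde v_k$ is exactly the paper's $\bar w$.
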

\begin{proof}
Let $j=1$. Then we have by \eqref{eq-ptfull},
\begin{align} \label{eq-ptjomegaform}
P^{(1)}_t(\Omega)& = \int_{ S^1_t(\Omega) } P_t(\Phi) \nonumber \\
& =  \int_0^t \int_{\mathbb{R}^3} \int_{\mathbb{R}^3} \int_{\mathbb{S}^2} \int_U \exp \left( -(t-\tau) \int_{\mathbb{S}^2} \int_{\mathbb{R}^3} g_0(\bar{v}) [(v(\tau)-\bar{v})\cdot \nu]_+ \, \mathrm{d}\bar{v} \, \mathrm{d}\nu  \right) \nonumber \\
& \qquad \qquad P_\tau^{(0)}(x_0+ \tau v_0,v_0)g_0(v') [(v_0-v')\cdot \nu]_+ \mathbbm{1}_{(x(t),v(t)) \in \Omega)} \, \mathrm{d}x_0 \, \mathrm{d}\nu \, \mathrm{d}v' \, \mathrm{d}v_0   \, \mathrm{d}\tau
\end{align}
We define a coordinate transform $(\nu, x_0,v_0,v') \mapsto (\nu, x,v,\bar{w})$ given by,
\begin{align*}
v & :=v_0 + \nu (v'-v_0)\cdot \nu  \\
x & := x_0 + \tau v_0 + (t-\tau) v \\
\bar{w} & := v' - \nu (v'-v_0)\cdot \nu.
\end{align*}
This transformation has Jacobi matrix,
\[  \begin{pmatrix}
\textrm{Id} & 0 & 0 & 0 \\
 & \textrm{Id} & &  \\
 & 0 & \textrm{Id}-\nu \otimes \nu & \nu \otimes \nu \\
 & 0 & \nu \otimes \nu & \textrm{Id}-\nu \otimes \nu \\
\end{pmatrix} \]
where the blank entries are not required for the computation of the matrix's determinant. The 2x2 matrix in the bottom right has determinant $- 1$ and hence the absolute value of the determinant of the entire matrix is $1$. With this \eqref{eq-ptjomegaform} becomes,
\begin{align*}
P^{(1)}_t(\Omega)& = \int_{\Omega} \int_0^t \int_{\mathbb{R}^3}  \int_{\mathbb{S}^2}  \exp \left( -(t-\tau) \int_{\mathbb{S}^2} \int_{\mathbb{R}^3} g_0(\bar{v}) [(v-\bar{v})\cdot \nu]_+ \, \mathrm{d}\bar{v} \, \mathrm{d}\nu  \right) \nonumber \\
& \qquad \qquad P_\tau^{(0)}(x-(t-\tau)v,w')g_0(\bar{w}') [(v-\bar{w})\cdot \nu]_+  \, \mathrm{d}\nu \, \mathrm{d}\bar{w} \,  \mathrm{d}\tau \, \mathrm{d}x \, \mathrm{d}v \nonumber,
\end{align*}
where $w'=v + \nu (\bar{w} - v) \cdot \nu$ and $\bar{w}'=\bar{w} - \nu (\bar{w} - v) \cdot \nu$. Hence we see that if the Lebesgue measure of $\Omega$ equals zero then so does $P_t^{(1)}(\Omega)$. For $j \geq 2$ we use a similar approach, using the iterative formula for $P_t(\Phi)$ \eqref{eq-ptfull}.
\end{proof}
\begin{rmk}
Since $P_t^{(j)}$ is an absolutely continuous measure on $U\times \mathbb{R}^3$, the Radon-Nikodym theorem (see Theorem 4.2.2 \cite{cohn13}) implies that $P_t^{(j)}$ has a density, which we denote by $P_t^{(j)}$ also. This gives,
\[ \int_{\Omega} P_t^{(j)}(x,v) \, \mathrm{d}x \, \mathrm{d}v = \int_{ S^j_t(\Omega) } P_t(\Phi) \, \mathrm{d}\Phi.  \]
Hence for almost all $(x,v) \in U \times \mathbb{R}^3$,
\[   P_t^{(j)}(x,v)  = \int_{S^j_t(x,v) } P_t(\Phi) \, \mathrm{d}\Phi. \]
\end{rmk}

\begin{rmk} \label{rmk-p0pt}
A similar formula holds for $P^{(0)}_t$ since the set $S^0_t(x,v)$ contains exactly one tree: the tree $\Phi$ with initial root data $(x-tv,v)$ and such that the root has no collisions,
\begin{align*}
\int_{S^0_t(x,v) } P_t(\Phi) \, \mathrm{d}\Phi &= P_t((x-tv,v)) = P^{(0)}_t(x,v).
\end{align*}
\end{rmk}

\begin{prop} \label{prop-pj}
For $j\geq 1$, $P^{(j)}_t$ as defined above is almost everywhere the unique mild solution to the following differential equation,
\begin{equation*}
\begin{cases}
\partial_t P^{(j)}_t(x,v) &  = AP_t^{(j)}(x,v) + BP_t^{(j-1)}(x,v), \\
P^{(j)}_0(x,v)& =0,
\end{cases}
\end{equation*}
where $A$ is given in \eqref{eq-adeff} and $B$ in \eqref{eq-bdeff}.
\end{prop}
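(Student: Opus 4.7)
My plan is to proceed by induction on $j$, using the same coordinate change that appears in the proof of Lemma \ref{lem-Ptjmble} to convert the iterative tree formula \eqref{eq-ptfull} into a Duhamel integral against the semigroup $T(t)$ generated by $A$, and then to invoke standard semigroup theory (as in Proposition \ref{prop-p0}) to conclude mild-solution status and uniqueness. The base case $j=0$ is proposition \ref{prop-p0}. For the inductive step, I will assume $P^{(j-1)}_t$ is the unique mild solution of its evolution equation and show that $P^{(j)}_t$ satisfies
\begin{equation*}
P^{(j)}_t = \int_0^t T(t-\tau) B P^{(j-1)}_\tau \, \dd\tau
\end{equation*}
almost everywhere in $U \times \R^3$. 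Once this representation is established, \cite[Prop.~3.1.16]{arendt11} gives that the right-hand side is the unique mild solution to the claimed inhomogeneous Cauchy problem, finishing the proof.

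The main calculation is the Duhamel identification. Starting from definition \ref{def-pj}, I write
\begin{equation*}
P^{(j)}_t(\Omega) = \int_{S^j_t(\Omega)} P_t(\Phi) \, \dd\Phi
\end{equation*}
and insert the iterative formula \eqref{eq-ptfull}, which factors the integrand into (i) the exponential loss factor on $[\tau,t]$ coming from $\mathcal{Q}^-$, (ii) the pruned-tree contribution $P_\tau(\bar{\Phi})$, and (iii) the gain kernel $g_0(v')[(v(\tau^-)-v')\cdot\nu]_+$. Fixing $\tau$ and the final collision marker $(\nu, v')$, I split the integration over $\bar{\Phi}$ into a fibre integral at the pre-collision phase-space point $(x-(t-\tau)v, w')$, which by the inductive definition of $P^{(j-1)}$ equals $P^{(j-1)}_\tau(x-(t-\tau)v, w')$. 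Applying the same change of variables $(\nu, x_0, v_0, v') \mapsto (\nu, x, v, \bar{w})$ used in the proof of Lemma \ref{lem-Ptjmble}, whose Jacobian has absolute value $1$, the combined integrand reorganises exactly as $T(t-\tau)$ (in its explicit form from lemma \ref{lem-tsemi}) applied to $BP^{(j-1)}_\tau$, with the exponential prefactor matching the loss attenuation built into $T(\cdot)$ and the collision kernel reproducing $Q^+$. Stripping off the arbitrary measurable set $\Omega$ yields the Duhamel formula pointwise almost everywhere.

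The remaining step is to verify that the integrand $T(t-\tau)BP^{(j-1)}_\tau$ is well defined in $L^1(U\times\R^3)$, i.e.\ that $P^{(j-1)}_\tau \in D(B)$. This is where the admissibility hypotheses \eqref{eq-f0assmp2}--\eqref{eq-g0linf5} enter. For $j=1$ it follows from $P^{(0)}_\tau \le f_\tau$ pointwise (Lemma \ref{lem-p0leqft}) combined with $f_\tau \in D(B)$ from proposition \ref{prop-linboltz}. For $j \ge 2$ I would propagate an analogous bound inductively: the inductive hypothesis gives $P^{(j-1)}_\tau$ as a Duhamel integral of $T(\cdot)BP^{(j-2)}$, and since $T(\cdot)$ is substochastic one obtains $\sum_j P^{(j)}_\tau \le f_\tau$ pointwise (the identity $f_t = \sum_{j\ge 0} P^{(j)}_t$ is the content of the second assertion of theorem \ref{thm-ideq}), so $P^{(j-1)}_\tau$ inherits the moment control needed to land in $D(B)$.

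The main obstacle I anticipate is making the coordinate change rigorous when unfolding $P_\tau(\bar\Phi)$, since $\bar\Phi$ still has $j-1$ internal collision times and markers, and care is needed that the Fubini/substitution steps commute with the integral that defines $P^{(j-1)}_\tau$ as a density rather than as a tree integral. This is handled by conditioning on $(\tau, \nu, v')$ first, applying Fubini to bring the $\bar\Phi$-integration to the innermost level, invoking remark \ref{rmk-p0pt} and its generalisation (the density identification just after Lemma \ref{lem-Ptjmble}) to replace the $\bar\Phi$-integral by $P^{(j-1)}_\tau$ at the shifted phase-space point, and only then performing the Jacobian-$1$ substitution on the outer variables. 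Once this bookkeeping is carried out, the proof reduces to the abstract semigroup argument already used for $P^{(0)}$.
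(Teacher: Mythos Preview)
Your approach is essentially that of the paper: establish the Duhamel representation $P^{(j)}_t=\int_0^t T(t-\theta)BP^{(j-1)}_\theta\,\dd\theta$ by the coordinate change of Lemma~\ref{lem-Ptjmble} (the paper records the case $j=1$ as a separate lemma), then invoke \cite[Prop.~3.1.16]{arendt11}.

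One genuine issue: to place $P^{(j-1)}_\tau$ in $D(B)$ for $j\ge 2$ you appeal to $\sum_k P^{(k)}_\tau\le f_\tau$ and cite the second assertion of Theorem~\ref{thm-ideq}. That identity is Proposition~\ref{prop-pjsum}, which is proved \emph{after} the present proposition and in fact relies on it, so the argument as written is circular. The paper breaks the loop by proving the individual bound $P^{(j-1)}_t\le f_t$ directly at each step of the induction, repeating the device of Lemma~\ref{lem-p0leqft}: set $F^{(j-1)}_t:=f_t-P^{(j-1)}_t$, note it is a mild solution of $\partial_t F^{(j-1)}=AF^{(j-1)}+B(f_t-P^{(j-2)}_t)$ with non-negative initial datum $f_0$, and deduce $F^{(j-1)}_t\ge 0$ from its Duhamel formula together with the inductive hypothesis $P^{(j-2)}\le f$. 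With that correction your proof coincides with the paper's. (A minor stylistic difference: for $j=1$ the paper verifies the hypothesis of \cite[Prop.~3.1.16]{arendt11} via $\int_0^t P^{(0)}_\theta\,\dd\theta\in D(A)\subset D(B)$ rather than via the pointwise inclusion $P^{(0)}_\theta\in D(B)$; either route works.)
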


The following lemma helps prove the proposition for the case $j=1$ which allows the use of an inductive argument to prove the proposition in full.
\begin{lem} \label{lem-p1alt}
For any $t \in [0,T]$ and almost all $(x,v)\in U \times \mathbb{R}^3$,
\begin{equation*}
P^{(1)}_t(x,v)= \int_0^t T(t-\theta)BP_\theta^{(0)}(x,v) \, \mathrm{d}\theta,
\end{equation*}
where the semigroup $T(t)$ is as in \eqref{eq-tsemi}.  The right hand side is well defined since  \eqref{lem-ftindb}, \eqref{lem-p0leqft} and \eqref{eq-bfl1bound} imply $P_t^{(0)} \in D(B)$.
\end{lem}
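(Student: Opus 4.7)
The plan is to read the identity directly off the computation already carried out in the proof of Lemma \ref{lem-Ptjmble}, with essentially no new analysis required.

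First I would recall the explicit formula for $P^{(1)}_t(\Omega)$ obtained after the unit-Jacobian change of variables $(\nu,x_0,v_0,v')\mapsto(\nu,x,v,\bar w)$ in the proof of Lemma \ref{lem-Ptjmble}:
\begin{equation*}
P^{(1)}_t(\Omega)=\int_\Omega\!\int_0^t\!\int_{\mathbb{R}^3}\!\int_{\mathbb{S}^2} e^{-(t-\tau)\int_{\mathbb{S}^2}\!\int_{\mathbb{R}^3} g_0(\bar v)[(v-\bar v)\cdot\nu']_+d\bar v d\nu'} P^{(0)}_\tau\bigl(x-(t-\tau)v,\,w'\bigr)\,g_0(\bar{w}')[(v-\bar w)\cdot\nu]_+\,d\nu\,d\bar w\,d\tau\,dx\,dv.
\end{equation*}
Because $P^{(1)}_t$ is absolutely continuous with respect to Lebesgue measure on $U\times\mathbb{R}^3$ (Lemma \ref{lem-Ptjmble} and the following remark), this identifies the density $P^{(1)}_t(x,v)$ almost everywhere with the inner $(\tau,\bar w,\nu)$-integral.

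Next I would match the inner integrand to $T(t-\tau)BP^{(0)}_\tau(x,v)$. Relabelling the dummy variable $\bar v$ as $\bar w$ in the definition of $Q^+$, the operator $B$ acts as
\begin{equation*}
(BP^{(0)}_\tau)(y,v)=\int_{\mathbb{S}^2}\!\int_{\mathbb{R}^3} P^{(0)}_\tau(y,w')\,g_0(\bar{w}')[(v-\bar w)\cdot\nu]_+\,d\bar w\,d\nu,
\end{equation*}
while from \eqref{eq-tsemi} the semigroup acts by free transport with exponential absorption,
\begin{equation*}
(T(s)h)(x,v)=e^{-s\int_{\mathbb{S}^2}\!\int_{\mathbb{R}^3} g_0(\bar v)[(v-\bar v)\cdot\nu']_+d\bar v d\nu'}\,h(x-sv,v).
\end{equation*}
Applying $T(t-\tau)$ to $BP^{(0)}_\tau$ and comparing term by term with the integrand above (the spatial shift $x-(t-\tau)v$ coming from $T$, the absorption exponential coming from $T$, and the gain kernel coming from $B$) shows they agree pointwise. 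Integrating in $\tau$ over $[0,t]$ then yields the claim.

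Finally I would briefly justify well-posedness and the use of Fubini: as the lemma statement notes, $P^{(0)}_\tau\le f_\tau$ by Lemma \ref{lem-p0leqft}, $f_\tau\in D(B)$ by \eqref{lem-ftindb}, and the pointwise control \eqref{eq-bfl1bound} together give $P^{(0)}_\tau\in D(B)$ with $\|BP^{(0)}_\tau\|_{L^1}$ bounded uniformly on $[0,T]$; substochasticity of $T(t-\tau)$ then makes $\theta\mapsto T(t-\theta)BP^{(0)}_\theta$ Bochner integrable, which is precisely what legitimates exchanging the order of integration in the identification above. The only real obstacle is bookkeeping: one must check that the pre- and post-collisional pair $(w',\bar w')$ produced by the change of variables in Lemma \ref{lem-Ptjmble} is exactly the pair appearing in the definition of $Q^+$, and that the spatial translation obtained in that change of variables is exactly the free-transport shift encoded in $T(t-\tau)$. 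No new estimates are needed.
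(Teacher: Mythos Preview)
Your proposal is correct and follows essentially the same route as the paper: both start from the post-change-of-variables formula for $P^{(1)}_t(\Omega)$ in Lemma~\ref{lem-Ptjmble} and identify the integrand with $T(t-\theta)BP^{(0)}_\theta$ by unwinding the definitions of $T$ and $B$. Your additional remarks on Bochner integrability and Fubini are a small elaboration beyond what the paper writes out, but not a different argument.
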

\begin{proof}
We show that for any $\Omega \subset U \times \mathbb{R}^3$ measurable we have,
\begin{equation*} \label{eq-p1alteq}
 \int_{ \Omega } P_t^{(1)}(x,v) \, \mathrm{d}x \, \mathrm{d}v = \int_{\Omega} \int_0^t T(t-\theta)BP_\theta^{(0)}(x,v) \, \mathrm{d}\theta \, \mathrm{d}x\, \mathrm{d}v. \\
\end{equation*}
By the definition of $T(t)$ in equation \eqref{eq-tsemi}, the definition of $B$ in \eqref{eq-bdeff}, and the proof of lemma~\ref{lem-Ptjmble}, for $w'=v + \nu (\bar{w} - v) \cdot \nu$ and $\bar{w}'=\bar{w} - \nu (\bar{w} - v) \cdot \nu$, we have,
\begin{align*} \label{eq-p1altrhs}
\int_{\Omega} \int_0^t & T(t-\theta)BP_\theta^{(0)}(x,v) \, \mathrm{d}\theta \, \mathrm{d}x\, \mathrm{d}v  \\
& = \int_{\Omega}   \int_0^t \exp \left( -(t-\theta) \int_{\mathbb{S}^2} \int_{\mathbb{R}^3} g_0(\bar{v}) [(v-\bar{v})\cdot \nu]_+ \, \mathrm{d}\bar{v} \, \mathrm{d}\nu  \right) \nonumber \\
& \qquad \qquad \times BP_\theta^{(0)}(x-(t-\theta)v,v)  \, \mathrm{d}\theta \, \mathrm{d}x\, \mathrm{d}v \\
& = \int_{\Omega}   \int_0^t \exp \left( -(t-\theta) \int_{\mathbb{S}^2} \int_{\mathbb{R}^3} g_0(\bar{v}) [(v-\bar{v})\cdot \nu]_+ \, \mathrm{d}\bar{v} \, \mathrm{d}\nu  \right) \nonumber \\
& \qquad \int_{\mathbb{R}^3}  \int_{\mathbb{S}^2} P_\theta^{(0)}(x-(t-\theta)v,w')g_0(\bar{w}') [(v-\bar{w})\cdot \nu]_+  \, \mathrm{d}\nu \, \mathrm{d}\bar{w} \,  \mathrm{d}\theta \, \mathrm{d}x \, \mathrm{d}v \nonumber \\
& = \int_{\Omega} \int_0^t \int_{\mathbb{R}^3}  \int_{\mathbb{S}^2}  \exp \left( -(t-\theta) \int_{\mathbb{S}^2} \int_{\mathbb{R}^3} g_0(\bar{v}) [(v-\bar{v})\cdot \nu]_+ \, \mathrm{d}\bar{v} \, \mathrm{d}\nu  \right) \nonumber \\
& \qquad \qquad P_\theta^{(0)}(x-(t-\theta)v,w')g_0(\bar{w}') [(v-\bar{w})\cdot \nu]_+  \, \mathrm{d}\nu \, \mathrm{d}\bar{w} \,  \mathrm{d}\theta \, \mathrm{d}x \, \mathrm{d}v \nonumber \\
& = \int_{ \Omega }  P_t^{(1)}(x,v) \, \mathrm{d}x \, \mathrm{d}v.
\end{align*}

\end{proof}
\begin{proof}[Proof of proposition~\ref{prop-pj}]
Consider induction on $j$. First let $j=1$. We seek to apply \cite[Prop 3.1.16]{arendt11}. If $\int_0^t BP_\theta^{(0)} \, \mathrm{d}\theta \in L^1(U\times \mathbb{R}^3)$ then the proposition holds so by the above lemma $P_t^{(1)}$ is the unique mild solution.

To this aim note that since $P_t^{(0)}$ is the unique mild solution of \eqref{eq-p0}, \[ \int_0^t P_\theta^{(0)} \, \mathrm{d}\theta \in D(A). \]
By \cite[Section 10.4.3]{arlotti06} $D(A) \subset D(B)$ and hence
\[ \int_0^t P_\theta^{(0)} \, \mathrm{d}\theta \in D(B). \]
This implies,
\[ B\int_0^t P_\theta^{(0)} \, \mathrm{d}\theta \in L^1(U\times \mathbb{R}^3). \]
as required.
Now consider $j\geq 2$ and assume the proposition is true for $j-1$. By setting $F_t^{(j-1)}:=f_t-P_t^{(j-1)}$ a similar argument to lemma~\ref{lem-p0leqft} shows that $P_t^{(j-1)} \leq f_t$. By \eqref{lem-ftindb} and \eqref{eq-bfl1bound}, $P_t^{(j-1)} \in D(B)$ so the right hand side is well defined.
A similar approach to lemma~\ref{lem-p1alt} shows that for any $t \in [0,T]$  and almost all $(x,v)\in U \times \mathbb{R}^3$,
\begin{equation*}
P^{(j)}_t(x,v)= \int_0^t (T(t-\theta)BP_\theta^{(j-1)})(x,v) \, \mathrm{d}\theta,
\end{equation*}
where $T(t)$ is the semigroup given in \eqref{eq-tsemi}. The rest follows by the same argument as in the $j=1$ case.
\end{proof}

\begin{prop} \label{prop-pjsum}
For all $t \in [0,T]$ and almost all $(x,v) \in U \times \mathbb{R}^3$,
\begin{equation} \label{eq-pjsum}
\sum_{j=0}^\infty P^{(j)}_t(x,v) = f_t(x,v),
\end{equation}
where $f_t$ is the unique mild solution of the linear Boltzmann equation given in proposition~\ref{prop-linboltz}.
\end{prop}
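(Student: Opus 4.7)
The strategy is to introduce the partial sums
$S_t^{(N)}(x,v) := \sum_{j=0}^N P_t^{(j)}(x,v)$
and show, using the recursive structure from Proposition~\ref{prop-pj}, that they are dominated by $f_t$ and exhaust all of its mass as $N\to\infty$. Since $S_t^{(N)}$ is pointwise non-decreasing in $N$, a monotone limit $S_t(x,v)\in[0,\infty]$ exists, and the two ingredients $S_t\le f_t$ pointwise and $\|S_t\|_{L^1}=\|f_t\|_{L^1}$ together with the positivity of $f_t-S_t$ will force $S_t=f_t$ a.e.

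\textbf{Step 1 (domination).} I first show by induction on $N$ that $S_t^{(N)}\le f_t$ pointwise a.e. The base case $N=0$ is Lemma~\ref{lem-p0leqft}. For the inductive step, set $R_t^{(N)}:=f_t-S_t^{(N)}$. From Proposition~\ref{prop-linboltz} and Proposition~\ref{prop-pj}, $R_t^{(N)}$ is a mild solution of $\partial_t R_t^{(N)}=AR_t^{(N)}+B(f_t-S_t^{(N-1)})$ with $R_0^{(N)}=0$, so by \cite[Prop.~3.1.16]{arendt11}
\[
R_t^{(N)} \;=\; \int_0^t T(t-\theta)\,B\bigl(f_\theta-S_\theta^{(N-1)}\bigr)\,\mathrm{d}\theta,
\]
and the integrand is non-negative by the induction hypothesis and the positivity of $B$ and of the semigroup $T$. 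Thus $R_t^{(N)}\ge 0$, i.e.\ $S_t^{(N)}\le f_t$, so $S_t := \lim_N S_t^{(N)}$ exists a.e.\ and lies in $L^1$ with $S_t\le f_t$.

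\textbf{Step 2 (mass telescoping).} Denote $q(v):=\int_{\mathbb{S}^2}\int_{\mathbb{R}^3} g_0(\bar v)[(v-\bar v)\cdot\nu]_+\,\mathrm{d}\bar v\,\mathrm{d}\nu$. Testing the equation in Proposition~\ref{prop-pj} against $1$, the transport term vanishes by the periodic boundary conditions, and the standard collisional change of variables $(v,\bar v)\leftrightarrow(v',\bar v')$ (Jacobian $1$) gives $\int Q^+[h]\,\mathrm{d}x\,\mathrm{d}v=\int q(v)\,h(x,v)\,\mathrm{d}x\,\mathrm{d}v$ for any non-negative $h\in D(B)$. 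Hence
\[
\frac{\mathrm{d}}{\mathrm{d}t}\int P_t^{(j)}\,\mathrm{d}x\,\mathrm{d}v \;=\; \int q(v)\,P_t^{(j-1)}\,\mathrm{d}x\,\mathrm{d}v \;-\; \int q(v)\,P_t^{(j)}\,\mathrm{d}x\,\mathrm{d}v\qquad (j\ge 1),
\]
and similarly $\tfrac{\mathrm{d}}{\mathrm{d}t}\int P_t^{(0)}=-\int q(v)P_t^{(0)}$. Summing over $j=0,\dots,N$ telescopes to
\[
\bigl\|S_t^{(N)}\bigr\|_{L^1} \;=\; 1-\int_0^t\!\int q(v)\,P_s^{(N)}(x,v)\,\mathrm{d}x\,\mathrm{d}v\,\mathrm{d}s.
\]

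\textbf{Step 3 (vanishing of the tail and conclusion).} The assumption \eqref{eq-g0l1assmp} gives $q(v)\le C(1+|v|)$. Because $\sum_{j=0}^\infty P_s^{(j)}(x,v)=S_s(x,v)\le f_s(x,v)$ and by \eqref{lem-ftmom} the integral $\int(1+|v|)f_s\,\mathrm{d}x\,\mathrm{d}v$ is finite (and, from the semigroup bound behind Proposition~\ref{prop-linboltz}, uniformly bounded for $s\in[0,T]$), the series $\sum_j\int q(v)P_s^{(j)}\,\mathrm{d}x\,\mathrm{d}v$ converges uniformly in $s$. In particular its tail $\int q(v)P_s^{(N)}\,\mathrm{d}x\,\mathrm{d}v\to 0$ pointwise in $s$, while being dominated by the integrable majorant $C\int(1+|v|)f_s\,\mathrm{d}x\,\mathrm{d}v$; dominated convergence in $s$ then yields $\|S_t^{(N)}\|_{L^1}\to 1=\|f_t\|_{L^1}$. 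Combined with $0\le S_t\le f_t$ this forces $S_t=f_t$ almost everywhere, which is exactly \eqref{eq-pjsum}.

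The main obstacle, and the only place where the moment hypothesis enters, is the last step: justifying the interchange of limit and integral in the tail so that the mass captured by arbitrarily high-order collision trees does not escape; this is where the weighted moment bound \eqref{lem-ftmom} on $f_t$ is essential.
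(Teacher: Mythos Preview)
Your argument is correct and reaches the conclusion, but it follows a genuinely different route from the paper's own proof, so a brief comparison is worthwhile.

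The paper proceeds by summing the mild-solution identities for the $P_t^{(j)}$ directly: it checks that $\int_0^t\sum_j P_\theta^{(j)}\,\mathrm{d}\theta$ lies in $D(A)\cap D(B)=D(A+B)$ and that the sum satisfies the mild form of \eqref{eq-linboltzoperatorform}, then invokes the uniqueness of mild solutions from Proposition~\ref{prop-linboltz}. In other words, the paper identifies $\sum_j P_t^{(j)}$ as another mild solution of the linear Boltzmann equation and lets the abstract semigroup machinery (closedness of $A$, honesty of the perturbed semigroup established via \cite[Thm~10.28]{arlotti06}) do the work.

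Your approach is more measure-theoretic: you show $S_t^{(N)}\le f_t$ by iterating the argument of Lemma~\ref{lem-p0leqft}, then use a mass-telescoping identity together with the first-moment bound \eqref{lem-ftmom} to force $\|S_t\|_{L^1}=\|f_t\|_{L^1}$, hence $S_t=f_t$ a.e. This avoids manipulating the infinite sum as a mild solution and makes the role of the moment hypothesis very transparent; in effect you are re-deriving, by hand, the ``no mass defect'' (honesty) statement that the paper imports from \cite{arlotti06}. The price is that you need an $s$-integrable majorant for $\int q(v)P_s^{(N)}$, which is where \eqref{lem-ftmom} enters.

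Two small points of care in your Step~3. First, the claim that the series $\sum_j\int q(v)P_s^{(j)}$ converges \emph{uniformly} in $s$ is stronger than what you actually have or need: a uniform upper bound on $\int q(v)f_s$ does not by itself give uniform convergence of the series. What suffices, and what your argument really uses, is dominated convergence in $s$ with majorant $s\mapsto\int q(v)f_s\,\mathrm{d}x\,\mathrm{d}v$; for this you only need that map to lie in $L^1([0,T])$. Second, since the $P_t^{(j)}$ are only mild solutions, the differential identities in Step~2 should be read in integrated form (which is exactly what the telescoping uses), not as pointwise time derivatives.
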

\begin{proof}
Since $P_t^{(0)}$ is a mild solution of \eqref{eq-p0},
\begin{equation} \label{eq-p0mildsolda}
\int_0^t P_\theta^{(0)}(x,v) \, \mathrm{d}\theta \in D(A),
\end{equation}
and,
\begin{equation} \label{eq-p0mildsol}
P_t^{(0)}(x,v)  = f_0(x,v) + A\int_0^t P_\theta^{(0)}(x,v) \, \mathrm{d}\theta.
\end{equation}
Further for $j\geq 1$ by proposition~\ref{prop-pj} and \cite[Prop 3.31]{arlotti06},
\begin{equation} \label{eq-pjmildsolda}
\int_0^t P_\theta^{(j)}(x,v) \, \mathrm{d}\theta \in D(A),
\end{equation}
and,
\begin{equation} \label{eq-pjmildsol}
P_t^{(j)}(x,v) = A\int_0^t P_\theta^{(j)}(x,v) \, \mathrm{d}\theta  + \int_0^t BP_\theta^{(j-1)}(x,v) \, \mathrm{d}\theta.
\end{equation}
Combining \eqref{eq-p0mildsolda} and \eqref{eq-pjmildsolda},
\[ \int_0^t \sum_{j=0}^\infty P_\theta^{(j)}(x,v) \, \mathrm{d}\theta \in D(A). \]
Recalling from the proof of proposition~\ref{prop-pj} that $D(A) \subset D(B)$,
\[ \int_0^t \sum_{j=0}^\infty P_\theta^{(j)}(x,v) \, \mathrm{d}\theta \in D(A) \cap D(B)=D(A+B). \]
Further summing \eqref{eq-p0mildsol} and \eqref{eq-pjmildsol} for $j \geq 1$,
\begin{align*}
\sum_{j=0}^\infty P_t^{(j)}(x,v) & =  f_0(x,v) +  \sum_{j=0}^\infty A\int_0^t P_\theta^{(j)}(x,v) \, \mathrm{d}\theta + \sum_{j=1}^\infty \int_0^t BP_\theta^{(j-1)}(x,v) \, \mathrm{d}\theta \\
& = f_0(x,v) + A \int_0^t \sum_{j=0}^\infty  P_\theta^{(j)}(x,v) \, \mathrm{d}\theta + \int_0^t B \sum_{j=0}^\infty P_\theta^{(j)}(x,v) \, \mathrm{d}\theta \\
& = f_0(x,v) + (A+B)\int_0^t \sum_{j=0}^\infty  P_\theta^{(j)}(x,v).
\end{align*}
Hence by definition~\ref{deff-mildsol}, $\sum_{j=0}^\infty  P_t^{(j)}(x,v)$ is a mild solution of \eqref{eq-linboltzoperatorform} and therefore since $f_t$ is the unique mild solution the proof is complete.
\end{proof}
We now have all the results needed to prove that $P_t$ satisfies all the requirements of theorem \ref{thm-ideq}.

\begin{proof}[Proof of theorem \ref{thm-ideq}]
Using definition~\ref{def-pj}, proposition \ref{prop-pjsum}, and, since each $P_t^{(j)}$ is positive, the monotone convergence theorem we have for any measurable $\Omega \subset U \times \mathbb{R}^3$,
\begin{align} \label{eq-ptftconn}
\int_{S_t(\Omega)} P_t(\Phi) \, \mathrm{d}\Phi & = \sum_{j=0}^\infty \int_{S_t^{j}(\Omega)} P_t(\Phi) \, \mathrm{d}\Phi
 = \sum_{j=0}^\infty \int_{\Omega} P^{(j)}_t(x,v) \, \mathrm{d}x \, \mathrm{d}v \nonumber \\
& =  \int_{\Omega} f_t(x,v) \, \mathrm{d}x \, \mathrm{d}v.
\end{align}
In particular,
\[ \int_{\mathcal{MT}} P_t(\Phi) \, \mathrm{d}\Phi =  \int_{U \times \mathbb{R}^3} f_t(x,v) \, \mathrm{d}x \, \mathrm{d}v < \infty. \]
Hence $P_t \in L^1(\mathcal{MT})$. To show that $P_t$ is a solution of \eqref{eq-id} first consider $\Phi \in \mathcal{T}_0$. Since $n(\Phi)=0$,
\[ P_0(\Phi) = P^{(0)}_0 (x(0),v(0)) = f_0(x_0,v_0) =  f_0(x_0,v_0)\mathbbm{1}_{n(\Phi)=0}. \]
Hence it solves the initial condition. Now for $t>0$, since $\Phi \in \mathcal{T}_0$, $v(t)=v_0$ and $x(t)=x_0+tv_0$. Hence by \eqref{eq-tsemi} and \eqref{eq-pt0ttf0},
\begin{align*}
P_t(\Phi) & = P_t^{(0)}(x(t),v(t))  = P_t^{(0)}(x_0+tv_0,v_0) \\
& = \exp \left( -t \int_{\mathbb{S}^2} \int_{\mathbb{R}^3} g_0(\bar{v}) [(v_0-\bar{v})\cdot \nu]_+ \, \mathrm{d}\bar{v} \, \mathrm{d}\nu  \right) f_0(x_0,v_0).
\end{align*}
The only dependence on $t$ here is in the exponential term so we differentiate $P_t(\Phi)$ with respect to $t$,
\begin{align*}
\partial_t P_t(\Phi) & = \partial_t \left( \exp \left( -t \int_{\mathbb{S}^2} \int_{\mathbb{R}^3} g_0(\bar{v}) [(v_0-\bar{v})\cdot \nu]_+ \, \mathrm{d}\bar{v} \, \mathrm{d}\nu  \right) f_0(x_0,v_0)  \right) \\
& = -\int_{\mathbb{S}^2} \int_{\mathbb{R}^3} g_0(\bar{v}) [(v_0-\bar{v})\cdot \nu]_+ \, \mathrm{d}\bar{v} \, \mathrm{d}\nu \times P_t(\Phi) = -\mathcal{Q}_t^-[P_t](\Phi).
\end{align*}
Hence $P_t$ solves \eqref{eq-id} on $\mathcal{T}_0$.

We now consider $ \Phi \in \mathcal{T}_j$ for $j \geq 1$. Since $\Phi \in \mathcal{T}_j$ we have $n(\Phi)=j$ and $\tau >0$. Hence
\begin{equation} \label{eq-pt01}
P_0(\Phi) =  0 =  f_0(x_0,v_0)\mathbbm{1}_{n(\Phi)=0}.
\end{equation}
For $t=\tau$,
\begin{align} \label{eq-pt02}
P_\tau(\Phi) & =  P_\tau(\bar{\Phi})g_0(v')[v(\tau^-)_-v')\cdot \nu]_+.
\end{align}
Further for $t>\tau$ the only dependence on $t$ is inside the exponential term and hence differentiating gives,
\begin{align} \label{eq-pt03}
\partial_t P_t(\Phi) & = \partial_t \Big( \exp \big(-(t-\tau) \int_{\mathbb{S}^2} \int_{\mathbb{R}^3} g_0(\bar{v}) [(v(\tau)-\bar{v})\cdot \nu']_+ \, \mathrm{d}\bar{v} \, \mathrm{d}\nu'  \big) \nonumber \\
& \qquad \qquad  P_\tau (\bar{\Phi})g_0(v')[(v_0-v')\cdot \nu]_+ \Big) \nonumber \\
& =   \exp \left( -(t-\tau) \int_{\mathbb{S}^2} \int_{\mathbb{R}^3} g_0(\bar{v}) [(v(\tau)-\bar{v})\cdot \nu']_+ \, \mathrm{d}\bar{v} \, \mathrm{d}\nu'  \right)\nonumber \\
& \qquad \qquad  P_\tau (\bar{\Phi})g_0(v')[(v_0-v')\cdot \nu]_+  \nonumber \\
& \qquad \qquad  \times \left( - \int_{\mathbb{S}^2} \int_{\mathbb{R}^3} g_0(\bar{v}) [(v(\tau)-\bar{v})\cdot \nu']_+ \, \mathrm{d}\bar{v} \, \mathrm{d}\nu' \right) \nonumber \\
& = - P_t(\Phi) \int_{\mathbb{S}^2} \int_{\mathbb{R}^3} g_0(\bar{v}) [(v(\tau)-\bar{v})\cdot \nu']_+ \, \mathrm{d}\bar{v} \, \mathrm{d}\nu'= -\mathcal{Q}^-_t[P_t](\Phi).
\end{align}
Equations \eqref{eq-pt01}, \eqref{eq-pt02} and \eqref{eq-pt03} prove that $P_t$ solves \eqref{eq-id} on $\mathcal{T}_j$. Since $\mathcal{MT}$ is the disjoint union of $\mathcal{T}_j$ for $j \geq 0$, $P_t$ is a solution of \eqref{eq-id} on $\mathcal{MT}$.
Finally, the required connection between $P_t$ and the solution of the linear Boltzmann equation has been shown in \eqref{eq-ptftconn}.
\end{proof}

\section{The Empirical Distribution}\label{sec:emp}
We now consider the empirical distribution of trees $\hat{P}_t^\varepsilon$ defined by the dynamics of the particle system for particles with diameter $\varepsilon$. To ease notation we drop the dependence on $\varepsilon$ and write $\hat{P}_t$. The key result of this section is that $\hat{P}_t$ solves the differential equation \eqref{eq-emp} which is similar to the idealized equation \eqref{eq-id}. The similarity between the two equations is exploited in the next section to prove the required convergence as $\varepsilon$ tends to zero.

We do this by restricting our attention to trees that are well controlled in various ways, calling these trees good trees.
\begin{deff}
For a tree $\Phi \in \mathcal{MT}$ define $ \mathcal{V}(\Phi) \in [0,\infty)$ to be the maximum velocity involved in the tree. That is,
\[ \mathcal{V}(\Phi):= \max \left\{ \max_{j=1,\dots,n(\Phi)}\{|v_j|\}, \max_{s \in [0,T]}\{|v(s)| \} \right\}.  \]
\end{deff}
\begin{deff} \label{def-recollisionfree}
A tree $\Phi \in \mathcal{MT}$ is called re-collision free at diameter $\varepsilon$ if for all $0 \leq \varepsilon'\leq \varepsilon$, for all $1 \leq j \leq n(\Phi)$ and for all $t < t_j$,
\[ |x(t)-(x_j+tv_j)|>\varepsilon'. \]
That is to say, if the tree involves a collision between the root particle and background particle $j$ at time $t_j$ then the root particle has not previously collided with background particle $j$. So if a tree is re-collision free then it involves at most one collision per background particle.

Further define
\[   R(\varepsilon)  : = \{ \Phi \in \mathcal{MT} : \Phi \textrm{ is re-collision free at diameter } \varepsilon   \}. \]
\end{deff}
\begin{deff}
A tree $\Phi \in \mathcal{MT}$ is called non-grazing if all collisions in $\Phi$ are non-grazing, that is if,
\[  \min_{1\leq j\leq n(\Phi)} \nu_j \cdot (v(t_j^-)-v_j) >0. \]
\end{deff}
\begin{deff}
A tree is $\Phi \in \mathcal{MT}$ is called free from initial overlap at diameter $\varepsilon>0$ if initially the root is at least $\varepsilon$ away from the centre of each background particle. Explicitly if, for $j=1,\dots,N$,
\[ |x_0-x_j|>\varepsilon. \]
Define $S(\varepsilon) \subset \mathcal{MT}$ to be the set of all trees that are free from initial overlap at radius $\varepsilon$.
\end{deff}
\begin{deff} \label{def-goodtrees}
For any pair of decreasing functions $V,M:(0,\infty) \to [0,\infty)$ such that $\lim_{\varepsilon \to 0} V(\varepsilon) = \lim_{\varepsilon \to 0}M(\varepsilon) =\infty $, the set of good trees of diameter $\varepsilon$, $\mathcal{G}(\varepsilon)$, is defined as,
\begin{align*}
\mathcal{G}(\varepsilon) &: = \Big\{   \Phi \in \mathcal{MT} : n(\Phi) \leq M(\varepsilon), \, \mathcal{V}(\Phi) \leq V(\varepsilon), \\ & \qquad \qquad \Phi \in  R(\varepsilon) \cap S(\varepsilon) \, \textrm{ and } \Phi \textrm{ is non-grazing}   \Big\}
\end{align*}
Since $M,V$ are decreasing for $\varepsilon' < \varepsilon$ we have $\mathcal{G}(\varepsilon) \subset \mathcal{G}(\varepsilon')$. Later some conditions on $M$ and $V$ are required to prove that $\hat{P}_t$ solves the relevant equation and to prove convergence.
\end{deff}
Now define the operator $\hat{\mathcal{Q}}_t$ which mirrors the idealized operator $\mathcal{Q}_t$ in the empirical case. Fix $\hat{C}_1>0$, a constant depending only on $\Phi$ described later. Define the gain operator,
\[ \hat{\mathcal{Q}}_t^+ [\hat{P}_t](\Phi): =\left\{ \begin{array}{ll} \delta (t - \tau) \hat{P}_t(\bar{\Phi}) \frac{ g_0(v')[(v(\tau^-)-v')\cdot \nu ]_+}{1- \pi \varepsilon^2 \int_0^\tau |v(s)-v'| \, \mathrm{d}s + \hat{C}_1\varepsilon^3}& \mbox{if } n\geq 1\\0& \mbox{if } n = 0.\end{array}\right.
\]
Next for a given tree $\Phi$, a time $0<t<T$ and $\varepsilon>0$, define the function $\mathbbm{1}_t^\varepsilon[\Phi]:  U \times \mathbb{R}^3 \rightarrow \{0,1\}$ by
\begin{equation} \label{eq-deff1phi}
 \mathbbm{1}_t^\varepsilon[\Phi] (\bar{x},\bar{v}):= \begin{cases}
1 \textrm{ if for all } s \in (0,t), \, |x(s)-(\bar{x}+s\bar{v})| > \varepsilon, \\ 0 \textrm{ else}.
\end{cases}
\end{equation}

That is $\mathbbm{1}_t^\varepsilon[\Phi] (\bar{x},\bar{v})$ is $1$ if a background particle starting at the position $(\bar{x},\bar{v})$ avoids colliding with the root particle of the tree $\Phi$ up to the time $t$. This allows us to define the loss operator,
\[ \hat{\mathcal{Q}}_t^- [\hat{P}_t](\Phi): =  \hat{P}_t(\Phi)\frac{\int_{\mathbb{S}^{2}}  \int_{\mathbb{R}^3}   g_0(\bar{v})[(v(\tau)-\bar{v})\cdot \nu]_+ \, \mathrm{d}\bar{v} \, \mathrm{d}\nu - \hat{C}_2(\varepsilon)}{\int_{ U \times {R}^3}  g_0(\bar{v}) \mathbbm{1}_t^\varepsilon[\Phi] (\bar{x},\bar{v}) \, \mathrm{d}\bar{x} \, \mathrm{d}\bar{v}  }. \]
For some $\hat{C}_2(\varepsilon)>0$ depending on $t$ and $\Phi$ of $o(1)$ as $\varepsilon$ tends to zero. Finally define the operator $\hat{\mathcal{Q}}_t$ as follows,
\[ \hat{\mathcal{Q}}_t = \hat{\mathcal{Q}}_t^+ - \hat{\mathcal{Q}}_t^-  . \]
\begin{thm} \label{thm-emp}
For $\varepsilon$ sufficiently small and for $\Phi \in \mathcal{G}(\varepsilon)$, $\hat{P}_t$ solves the following
\begin{equation} \label{eq-emp}
\begin{cases}
\partial_t \hat{P}_t(\Phi) & = (1-\gamma(t)) \hat{\mathcal{Q}}_t [\hat{P_t}](\Phi) \\
\hat{P}_0 (\Phi)& = \zeta(\varepsilon)  f_0 (x_0,v_0) \mathbbm{1}_{n(\Phi) = 0}.
\end{cases}
\end{equation}
The functions $\gamma$ and $\zeta$ are given by
\begin{equation} \label{eq-zetadeff}
\zeta(\varepsilon) := (1- \frac{4}{3}\pi \varepsilon^3)^N,
\end{equation}
and,
\[ \gamma(t) :=  \left\{ \begin{array}{ll} n(\bar{\Phi})\varepsilon^{2}&\mbox{if } t=\tau,\\
  n(\Phi)\varepsilon^2& \mbox{if } t>\tau. \end{array}\right.\]
\end{thm}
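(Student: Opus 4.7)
The plan is to obtain an explicit product formula for $\hat{P}_t(\Phi)$ on $\mathcal{G}(\varepsilon)$ by reconstructing the initial configurations of the $N{+}1$ particles from the tree data, and then to differentiate this formula in $t$ term by term.

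\textbf{Step 1 (density formula).} Fix $\Phi \in \mathcal{G}(\varepsilon)$ with $n = n(\Phi)$ collisions. I would split the $N$ background particles into (i) the $n$ \emph{specific} particles that collide with the root and (ii) $N-n$ \emph{free} particles. For (i), the initial datum of the $j$-th specific particle is $x_j^{(0)} = x(t_j^-) - \varepsilon\nu_j - t_j v_j$, so changing variables $x_j^{(0)} \mapsto (t_j,\nu_j)$ at fixed $v_j$ produces the standard hard-sphere Jacobian $\varepsilon^2\,[(v(t_j^-)-v_j)\cdot\nu_j]_+$, strictly positive thanks to non-grazing. For (ii), free particles are constrained only to avoid the root up to time $t$, contributing $I_t(\Phi)^{N-n}$ with
\[
I_t(\Phi) := \int_{U\times\mathbb{R}^3} g_0(\bar v)\,\mathbbm{1}_t^\varepsilon[\Phi](\bar x,\bar v)\,\mathrm{d}\bar x\,\mathrm{d}\bar v.
\]
Multiplying by the combinatorial factor $N!/(N-n)!$ for the ordered choice of specific particles, and by a factor $\Xi_t(\Phi)=1+O(\varepsilon^3)$ on $\mathcal{G}(\varepsilon)$ encoding the initial no-overlap condition between the root and the specific particles, I expect
\[
\hat{P}_t(\Phi) = \frac{N!}{(N-n)!}\,\varepsilon^{2n}\,\Xi_t(\Phi)\,I_t(\Phi)^{N-n}\,f_0(x_0,v_0)\prod_{j=1}^n g_0(v_j)\,[(v(t_j^-)-v_j)\cdot\nu_j]_+.
\]
The good-tree assumptions (non-grazing, re-collision free, no initial overlap, bounded $n(\Phi)$ and $\mathcal{V}(\Phi)$) are exactly what make this change of variables a bona fide bijection and the error terms controllable.

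\textbf{Step 2 (initial condition).} For $n(\Phi) = 0$ there are no specific particles, so $\Xi_0 = 1$ and $I_0(\Phi)$ reduces to $1 - \tfrac{4}{3}\pi\varepsilon^3$ (the volume outside the $\varepsilon$-ball around the root), giving $\hat{P}_0(\Phi) = \zeta(\varepsilon) f_0(x_0,v_0)$. For $n(\Phi)\geq 1$ the formula automatically gives $\hat{P}_0(\Phi)=0$. Together these match the initial condition in \eqref{eq-emp}.

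\textbf{Step 3 (smooth evolution for $t > \tau$ and jump at $t = \tau$).} For $t > \tau$ only $I_t(\Phi)^{N-n}$ depends on $t$, so $\partial_t\log\hat{P}_t(\Phi) = (N-n)\,\partial_t I_t(\Phi)/I_t(\Phi)$. The derivative $\partial_t I_t(\Phi)$ is the flux of free $(\bar x,\bar v)$ through the moving sphere $\partial B_\varepsilon(x(t))$; by the co-area formula and the straight-line motion of the background it equals $-\varepsilon^2\int_{\mathbb{S}^2}\int_{\mathbb{R}^3} g_0(\bar v)[(v(\tau)-\bar v)\cdot \nu]_+\,\mathrm{d}\bar v\,\mathrm{d}\nu$ plus the correction $\varepsilon^2 \hat{C}_2(\varepsilon)$ coming from interference between the would-be collision cylinder and the tubes already excluded by the specific particles. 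Using $(N-n)\varepsilon^2 = 1-\gamma(t)$ from the Boltzmann-Grad scaling yields exactly $-(1-\gamma(t))\hat{\mathcal{Q}}_t^-[\hat P_t](\Phi)$. At $t=\tau$, forming the ratio $\hat P_\tau(\Phi)/\hat P_\tau(\bar\Phi)$ from the Step~1 formula produces the jump: the combinatorial piece collapses to $(N-n+1)\varepsilon^2 = 1-\gamma(\tau)$, the added Jacobian contributes $g_0(v')[(v(\tau^-)-v')\cdot\nu]_+$, and the ratio $I_\tau(\bar\Phi)/I_\tau(\Phi)$ contributes the denominator $1 - \pi\varepsilon^2\int_0^\tau|v(s)-v'|\,\mathrm{d}s + \hat{C}_1\varepsilon^3$, since adding the $n$-th collision enlarges the excluded tube by the cylinder swept by the new specific particle with a geometric remainder of order $\varepsilon^3$.

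\textbf{Main obstacle.} The core technical difficulty is the careful evaluation of $\partial_t I_t(\Phi)$ and the ratio $I_\tau(\bar\Phi)/I_\tau(\Phi)$: one must identify the leading term as the Boltzmann collision kernel and genuinely isolate the lower-order corrections $\hat{C}_1\varepsilon^3$ and $\hat{C}_2(\varepsilon)$ that arise from overlaps between the tube excluded by each specific particle and the one excluded by the new collision, and from the geometry near $\partial B_\varepsilon(x(t))$. The good-tree conditions are what make this quantitative: non-grazing keeps the hard-sphere Jacobian non-degenerate, re-collision freedom ensures the excluded cylinders are essentially disjoint up to $O(\varepsilon^3)$, and the bounds on $\mathcal{V}(\Phi)$ and $n(\Phi)$ keep the error constants $\hat{C}_1, \hat{C}_2$ under control as $\varepsilon \to 0$.
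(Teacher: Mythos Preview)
Your strategy---derive a closed product formula for $\hat P_t(\Phi)$ and then differentiate---is a genuinely different route from the paper's. The paper never assembles such a formula; it establishes each piece of \eqref{eq-emp} by a separate probabilistic computation: the initial condition by counting configurations with no initial overlap (Lemma~\ref{lem-emp1}); the loss term by writing $\hat P_{t+h}(\Phi)=\bigl(1-\hat P_t(\#(\omega\cap W_h(t))>0\mid\Phi)\bigr)\hat P_t(\Phi)$ and computing the one-collision rate via inclusion--exclusion (Lemmas~\ref{lem-phwht2coll}, \ref{lem-intg0bhtbound}, \ref{lem-phwht1coll}, \ref{lem-emp2}); and the jump at $\tau$ by factoring $\hat P_\tau(\Phi)=\hat P_\tau(\Phi\mid\bar\Phi)\,\hat P_\tau(\bar\Phi)$ and computing the conditional density of the new background particle (Lemma~\ref{lem-emp3}). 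Your route is more synthetic and, when carried out carefully, neatly explains why the loss denominator is $\int g_0\,\mathbbm 1^\varepsilon_t[\Phi]$; the paper's route produces each error term as an explicit probabilistic rate, which plugs directly into the comparison argument of Section~\ref{sec:conv}.

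There is, however, a concrete error in your analysis of the jump at $t=\tau$. You attribute the denominator $1-\pi\varepsilon^2\int_0^\tau|v(s)-v'|\,\mathrm{d}s+\hat C_1\varepsilon^3$ to the ratio $I_\tau(\bar\Phi)/I_\tau(\Phi)$, saying that the $n$-th collision ``enlarges the excluded tube by the cylinder swept by the new specific particle''. But $\mathbbm 1^\varepsilon_t[\Phi]$ in \eqref{eq-deff1phi} depends only on the \emph{root's} trajectory on $(0,t)$, and the root's path on $(0,\tau)$ is identical for $\Phi$ and $\bar\Phi$ since the last collision occurs \emph{at} $\tau$. Hence $I_\tau(\bar\Phi)=I_\tau(\Phi)$ and that ratio is $1$. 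What your product formula actually yields is
\[
\frac{I_\tau(\Phi)^{N-n}}{I_\tau(\bar\Phi)^{\,N-n+1}}=\frac{1}{I_\tau(\bar\Phi)},
\]
coming from the mismatch of exponents. Now $I_\tau(\bar\Phi)$ is the avoidance probability \emph{averaged over the background velocity}, whereas the denominator in $\hat{\mathcal Q}^+_t$ is $1-|\Delta|$ with $\Delta$ the excluded tube at the \emph{specific} incoming velocity $v'$ (see \eqref{eq-voldelta}). These quantities differ at order $\varepsilon^2$, so your formula does not reproduce $\hat{\mathcal Q}^+_t$ exactly as stated; at best it proves a variant of the theorem with an equally harmless but different $1+O(\varepsilon^2)$ denominator. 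The paper obtains the $v'$-specific form precisely because in Lemma~\ref{lem-emp3} it conditions on $\bar\Phi$ and then computes the density of the new particle's initial \emph{position} given that its velocity equals $v'$; the region $\Delta$ is the tube swept by the root as seen from a background particle moving with velocity $v'$, not an average over all velocities.
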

\begin{rmk} When choosing  the background particles according to some Poisson point process some of  these terms simplify as in \cite{matthies10}.
\end{rmk}

The proof is developed by a series of lemmas in which we prove the gain term, loss term and initial condition separately.
\begin{deff} \label{deff-omega}
Define $\omega_0:=(u_0,w_0) \in U \times \mathbb{R}^3$ to be the random initial position of the test particle. By our model $\omega_0$ has distribution $f_0$.

Further for $j=1,\dots,N$ define $\omega_j:=(u_j,w_j)$ to be the random  initial position and velocity of background particle $j$. Note that $\omega_j$ has distribution $\textrm{Unif}(U) \times g_0$. Finally define $\omega := (\omega_1,\dots,\omega_N)$.
\end{deff}
\begin{lem} \label{lem-phatabscts}
Let $\varepsilon>0$ and $\Psi \in \mathcal{G}(\varepsilon)$ then $\hat{P}_t$ is absolutely continuous with respect to the Lebesgue measure $\lambda$ on a neighbourhood of $\Psi$.
\end{lem}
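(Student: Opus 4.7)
The plan is to realize $\hat{P}_t$ in a neighborhood of $\Psi$ as the pushforward of the absolutely continuous initial distribution $f_0 \otimes (\mathrm{Unif}(U)\otimes g_0)^{\otimes N}$ under the smooth map that sends initial configurations to collision trees, and then appeal to the change of variables formula. The good-tree hypothesis is exactly what is needed to make this map a local diffeomorphism with a non-degenerate Jacobian.

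First, I would argue that $\mathcal{G}(\varepsilon)$ is open in $(\mathcal{MT},\mathrm{d})$: the conditions $n(\Phi)\le M(\varepsilon)$, $\mathcal{V}(\Phi)\le V(\varepsilon)$, $\Phi\in R(\varepsilon)\cap S(\varepsilon)$, and strict non-grazing are all stable under small perturbations of the tree parameters (the non-grazing condition gives a quantitative lower bound $\min_j \nu_j\cdot(v(t_j^-)-v_j)>0$ that persists). Hence there exists $h_0>0$ so that $B_h(\Psi)\subset\mathcal{G}(\varepsilon)$ for $h<h_0$; in particular every $\Phi\in B_h(\Psi)$ has exactly $n=n(\Psi)$ collisions.

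Next, for measurable $A\subset B_h(\Psi)$, I would write
\[
\hat{P}_t(A)=\mathbb{P}\bigl(\Phi(\omega)\in A\bigr)
=\frac{N!}{(N-n)!}\int f_0(\omega_0)\prod_{j=1}^n g_0(w_{i_j})\mathbbm{1}_U(u_{i_j})\,\chi(\omega_0,\omega_{i_1},\ldots,\omega_{i_n})\,\mathrm{d}\omega_0\,\mathrm{d}\omega_{i_1}\cdots\mathrm{d}\omega_{i_n},
\]
after summing over ordered tuples $(i_1,\ldots,i_n)$ by exchangeability, where $\chi$ is the indicator that this configuration produces a tree in $A$ and where the non-colliding particles have been integrated out (yielding a factor bounded by $1$, essentially $\bigl(\!\int g_0\mathbbm{1}_t^\varepsilon[\Phi]\bigr)^{N-n}$, cf.\ \eqref{eq-deff1phi}). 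On the domain of integration, the iterative implicit equation $|x(t_j;\Phi)-u_{i_j}-t_j w_{i_j}|=\varepsilon$ determines $t_j,\nu_j$ as smooth functions of $(u_{i_j})$ because non-grazing yields $\partial_t|x(t)-u_{i_j}-t w_{i_j}|^2\bigl|_{t=t_j}=2\varepsilon(v(t_j^-)-w_{i_j})\cdot\nu_j>0$, so the implicit function theorem applies. Inverting, $u_{i_j}=x(t_j;\Phi)-\varepsilon\nu_j-t_j v_j$ and $w_{i_j}=v_j$, and iterating over $j$ shows the map from $(\omega_0,u_{i_1},w_{i_1},\ldots,u_{i_n},w_{i_n})$ to the tree parameters is a smooth diffeomorphism on the relevant set, with Jacobian a product of $\varepsilon^2\,(v(t_j^-)-v_j)\cdot\nu_j$ factors that is bounded and strictly positive by the non-grazing lower bound. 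Applying the change of variables yields
\[
\hat{P}_t(A)=\int_A \rho_t(\Phi)\,\mathrm{d}\lambda(\Phi)
\]
for a density $\rho_t\in L^1_{\mathrm{loc}}(B_h(\Psi))$ depending on $f_0$, $g_0$, the Jacobian, and the non-colliding factor. In particular $\lambda(A)=0$ forces $\hat{P}_t(A)=0$, which is the desired absolute continuity.

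The main obstacle is making the local diffeomorphism rigorous in the presence of the iterative dependence of $t_j$ on previous collisions; this is where all four clauses in the definition of $\mathcal{G}(\varepsilon)$ are used simultaneously. Non-grazing gives the transverse intersection needed for the implicit function theorem at each collision; the recollision-free and initial-overlap-free conditions guarantee that a single time/background-particle pair solves the collision equation in $B_h(\Psi)$; and the bounds $n,\mathcal{V}\le M,V$ keep all constants uniform. The explicit form of $\rho_t$ that this argument produces will reappear, with the constants $\hat{C}_1,\hat{C}_2$, when verifying Theorem~\ref{thm-emp}.
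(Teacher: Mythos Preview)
Your approach is essentially the same as the paper's: both identify the diffeomorphism between tree parameters and the initial positions $(x_0,v_0),(x_1,v_1),\dots,(x_n,v_n)$ of the particles involved, compute the Jacobian as the product $\prod_{j=1}^n \varepsilon^2 (v(t_j^-)-v_j)\cdot\nu_j$, and use non-grazing to ensure it is nonzero. The only presentational difference is that you invoke the implicit function theorem and write an explicit density via change of variables, whereas the paper defines the inverse map $\tilde\varphi$ directly, bounds $\hat P_t(\tilde\varphi^{-1}(C_h))$ above by $\int_{C_{h,0}}f_0\cdot\prod_j N\int_{C_{h,j}}g_0$ on shrinking cubes $C_h$, and shows the ratio $\hat P_t/\lambda$ stays bounded as $h\to 0$; the underlying computation is identical.
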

\begin{proof}
Recall the definition of $B_h(\Psi)$ \eqref{eq-balldeff}. Since $\mathcal{G}(\varepsilon)$ is open there exists a $h>0$ such that $B_h(\Psi) \subset \mathcal{G}(\varepsilon)$. In the case $n(\Psi) =0$ for all $t \geq 0$,  $\hat{P}_t(\Psi) \leq f_0(x_0,v_0)$ and hence absolute continuity follows.

Suppose $n(\Phi) \geq 1$. Define a map $\varphi:B_h(\Psi) \to \mathcal{MT} \times U\times \mathbb{R}^3$,
\[ \varphi(\Phi) := (\bar{\Phi},(x(\tau)+\varepsilon \nu - \tau v',v')). \]
We view $\varphi$ as having $n(\Phi)+1$ components, the first being the initial root position $(x_0,v_0)$, components $j=2\dots,n$ being the marker $(t_j,\nu_j,\nu_j)$ and the final component being $(x(\tau)+\varepsilon \nu - \tau v',v')$ - the initial position of the background particle that leads to the final collision with the root in $\Phi$. We claim that,
\begin{equation} \label{eq-detgradphi}
\det(\nabla \varphi)(\Phi) = \varepsilon^2 (v(\tau^-) - v')\cdot \nu.
\end{equation}
To prove this we first rotate our coordinate axis so that $\nu = \textbf{e}_1$. Then for $k=0,\dots,n$ define $F_{0,k}: = \nabla_{x_0} \varphi_k(\Phi) $ and for $j=1,\dots, n$ define $F_{j,k}: = \nabla_{t_k,\nu_k}\varphi_j(\Phi)$.
We calculate,
\[ F_{n+1,n+1} = \nabla_{\tau,\nu}\varphi_{n+1}(\Phi) = \begin{pmatrix}
(v(\tau^-)-v')\cdot \nu & 0 & 0 \\
 & \varepsilon & 0 \\
 & 0 & \varepsilon \\
\end{pmatrix}, \]
where the blank components are not needed. Also,
$ F_{0,n} = \nabla_{x_0} \varphi_{n}(\Phi) = \textrm{Id}(2). $ Further for $j=2,\dots,n+1$,
$F_{j,j} = \nabla_{t_j,\nu_j}\varphi_{j}(\Phi) = \textrm{Id}(2). $ For any other $j,k$ not already calculated,
$ F_{j,k} =0.$

Hence $\det(\nabla \varphi)(\Phi)$ is the product of the determinants of all $F_{k,k}$ for $k=1,\dots,n+1$, proving the claim.

Now define a second map, $\tilde{\varphi}: B_h(\Psi) \to (U \times \mathbb{R})^{n+1}$,
\[ \tilde{\varphi}(\Phi) : = ((x_0,v_0),(x_1,v_1),\dots,(x_n,v_n)). \]
This maps $\Phi$ to the initial position of each particle in $\Phi$. By repeatedly applying \eqref{eq-detgradphi},
\begin{equation} \label{eq-dettildphi}
\det (\nabla \tilde{\varphi}) (\Phi) = \prod_{j=1}^{n(\Phi)} \big(\varepsilon^2 (v(t_j^-) - v_j)\cdot \nu_j \big).
\end{equation}
For $h>0$ and $j=0,\dots,n$ define $C_{h,j}(\Phi) = C_{h,j} \subset U\times \mathbb{R}^3$ to be the cube with side length $h$ centred at $\tilde{\varphi}_j(\Phi)$.
Further for $h>0$ define,
\[ C_h(\Phi) := \prod_{j=0}^{n} C_{h,j}(\Phi).   \]
By the fact that the probability of finding a tree is less than the probability that initially there is a particles at the required initial position,
\begin{align} \label{eq-phatchbound}
\hat{P}_t(\tilde{\varphi}^{-1}(C_h))
&  \leq \int_{C_{h,0}}f_0(x,v) \, \mathrm{d}x \, \mathrm{d}v \nonumber \\
& \qquad \qquad  \times \prod_{j=1}^n  N \int_{C_{h,j}}g_0(v) \, \mathrm{d}x \, \mathrm{d}v.
\end{align}
Recalling that $\lambda$ denotes the Lebesgue measure on $\mathcal{MT}$, by \eqref{eq-dettildphi} it follows,
\begin{equation} \label{eq-lambdabound}
\lambda (\tilde{\varphi}^{-1}(C_h)) = \frac{h^{6(n+1)}}{\prod_{j=1}^{n} \big(\varepsilon^2 (v(t_j^-) - v_j)\cdot \nu_j \big)}(1+o(1)).
\end{equation}
Hence combining \eqref{eq-phatchbound} and \eqref{eq-lambdabound} and recalling \eqref{eq-boltzgrad},
\begin{align}
\frac{\hat{P}_t(\tilde{\varphi}^{-1}(C_h))}{\lambda (\tilde{\varphi}^{-1}(C_h))} & \leq \frac{1}{h^6} \int_{C_{h,0}}f_0(x,v) \, \mathrm{d}x \, \mathrm{d}v \nonumber \\
&  \qquad   \times   \prod_{j=1}^n \left( \frac{(v(t_j^-) - v_j)\cdot \nu_j}{h^6(1+o(1))}  \int_{C_{h,j}}g_0(v) \, \mathrm{d}x \, \mathrm{d}v  \right).
\end{align}
Since $f_0\in L^1(U\times \mathbb{R}^3)$ and $g_0\in L^1(\mathbb{R}^3)$ let $h$ tend to zero and the left hand side, which becomes $\hat{P}_t(\Phi) / \lambda (\Phi)$ in the limit, remains bounded. This completes the proof.
\end{proof}
We now prove the initial condition requirement on $\hat{P}_t$.
\begin{lem} \label{lem-emp1}
Under the assumptions and set up of theorem~\ref{thm-emp} we have $$\hat{P}_0(\Phi) = \zeta(\varepsilon)  f_0 (x_0,v_0) \mathbbm{1}_{n(\Phi) = 0}.$$
\end{lem}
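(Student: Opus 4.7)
The plan is to observe that at $t=0$ the only trees consistent with the empirical dynamics are those containing no collisions, and then to compute the density on such trees directly from the product law for the initial configuration.

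First I would handle the case $n(\Phi) \geq 1$. Any tree in $\mathcal{MT}$ with at least one collision has $\tau(\Phi) > 0$, so by time $t=0$ no collision described by $\Phi$ can have occurred under the particle dynamics. Consequently the event whose density is $\hat P_0(\Phi)$ is empty, giving $\hat P_0(\Phi) = 0$. This accounts for the indicator $\mathbbm{1}_{n(\Phi)=0}$ in the statement.

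Next, for the case $n(\Phi) = 0$, the tree is determined solely by the root data $(x_0,v_0)$. The relevant event at $t=0$ is that the tagged particle starts at $(x_0,v_0)$ and that the particle system is admissible, i.e. no background particle initially overlaps the tagged particle. Using Definition~\ref{deff-omega}, I would write
\[
\hat P_0(\Phi) \,\dd x_0\,\dd v_0 \;=\; \mathbb{P}\bigl(\omega_0 \in \dd x_0\,\dd v_0,\ |u_j - x_0| > \varepsilon \text{ for all } j = 1,\dots,N\bigr).
\]
Independence of $\omega_0$ from $\omega_1,\dots,\omega_N$ and independence of the background particles among themselves lets me factor the right hand side into $f_0(x_0,v_0)\,\dd x_0\,\dd v_0$ times the product over $j$ of the probability that $u_j$, uniform on the torus $U$, lies outside the ball of radius $\varepsilon$ around $x_0$. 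For $\varepsilon$ small enough that this ball embeds in $U$, each factor equals $1 - \tfrac{4}{3}\pi\varepsilon^3$, independent of $x_0$ by translation invariance of the uniform measure on the torus. Taking the $N$-fold product produces exactly $\zeta(\varepsilon)$ defined in \eqref{eq-zetadeff}.

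The only real subtlety — and the step I would flag as requiring a line of justification — is that the background positions do not interact with the velocity distribution $g_0$ for the non-overlap computation, so the $g_0$-factors integrate out to $1$ and only the spatial volume enters. Combining the three paragraphs yields
\[
\hat P_0(\Phi) \;=\; \zeta(\varepsilon)\, f_0(x_0,v_0)\,\mathbbm{1}_{n(\Phi)=0},
\]
which is exactly the claimed identity.
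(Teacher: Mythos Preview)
Your proof is correct and follows essentially the same approach as the paper: you split into the cases $n(\Phi)\geq 1$ (where $\tau>0$ forces $\hat P_0(\Phi)=0$) and $n(\Phi)=0$ (where you factor the root density $f_0(x_0,v_0)$ against the $N$-fold product of non-overlap probabilities, each equal to $1-\tfrac{4}{3}\pi\varepsilon^3$). Your explicit remark that the $g_0$-factors integrate to~$1$ matches the paper's computation where the $\bar v$-integral over $g_0$ is carried along and then collapses.
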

\begin{proof}
In the case $n(\Phi) > 0$, we have $\hat{P}_0(\Phi)=0$. This is because the tree is free from initial overlap and the tree involves collisions happening at some positive time therefore the collisions cannot have occurred at time 0.

Now consider $n(\Phi)=0$. In this situation the tree $\Phi$ contains only the root particle and the probability of finding the root at the given initial data $(x_0,v_0)$ is given by $f_0(x_0,v_0)$. However this must be multiplied by a factor less than one because we must rule out situations that would give initial overlap of the root particle with a background particle. So we calculate the probability that there is no overlap. Firstly,
 \begin{align*}
\mathbb{P}(|x_0 - x_1| > \varepsilon ) & = 1 - \mathbb{P}(|x_0 - x_1| < \varepsilon )
 = 1 - \int_{\mathbb{R}^3} \int_{|x_0 - x_1| <\varepsilon}  g_0(\bar{v}) \, \mathrm{d}{x_1} \, \mathrm{d}\bar{v} \\
&= 1- \frac{4}{3}\pi \varepsilon^3 \int_{\mathbb{R}^3} g_0(\bar{v}) \, \mathrm{d}\bar{v}  = 1 - \frac{4}{3}\pi \varepsilon^3.
\end{align*}
Hence,
\begin{align*}
\mathbb{P}(|x_0 - x_j| > \varepsilon, \forall j = 1 ,\dots ,N ) & = \mathbb{P}(|x_0 - x_1| > \varepsilon ) ^N =( 1 - \frac{4}{3}\pi \varepsilon^3) ^N = \zeta (\varepsilon),
\end{align*} as required.
\end{proof}
Before we prove the loss term lemma we require a few technical estimates to calculate the rate at which the root particle experiences a collision.
\begin{deff} \label{def-wht}
Let $\Phi \in \mathcal{G}(\varepsilon)$. Define for $h>0$,
\begin{align*}
 W_h(t):= \Big\{ (\bar{x},\bar{v}) & \in U \times \mathbb{R}^3 : \exists (\nu',t') \in \mathbb{S}^2 \times (t,t+h) \\ & \textrm{ such that } x(t')+ \varepsilon \nu' = \bar{x }+t' \bar{v}  \textrm{ and } (v(t'^-) - \bar{v})\cdot \nu' >0 \Big\}.
\end{align*}
That is $W_h(t)$ is the set of initial points in $U \times \mathbb{R}^3 $ for the background particles that lead to a collision with the root particle of $\Phi$ between the time $t$ and $t+h$. Further define,
\begin{equation} \label{eq-iht}
I_h(t):= \frac{\int_{U \times \mathbb{R}^{3}}    g_0(\bar{v}) \mathbbm{1}_{W_h(t)} (\bar{x},\bar{v}) \mathbbm{1}_t^\varepsilon[\Phi] (\bar{x},\bar{v}) \, \mathrm{d}\bar{x} \, \mathrm{d}\bar{v} }{\int_{ U \times \mathbb{R}^3}  g_0(\bar{v}) \mathbbm{1}_t^\varepsilon[\Phi] (\bar{x},\bar{v}) \, \mathrm{d}\bar{x} \, \mathrm{d}\bar{v}  }.
\end{equation}
\end{deff}
From now on assume that the functions $V$ and $M$ in definition~\ref{def-goodtrees} satisfy, for any $0<\varepsilon<1$,
\begin{equation}\label{eq-vassump2}
\varepsilon V(\varepsilon)^3  \leq \frac{1}{8},  \end{equation}
and,
\begin{equation} \label{eq-massump}
M(\varepsilon)  \leq \frac{1}{\sqrt[]{\varepsilon}}.
\end{equation}
\begin{lem} \label{lem-phwht2coll}
Recall definition~\ref{deff-omega}. For $\varepsilon$ sufficiently small, $\Phi \in \mathcal{G}(\varepsilon)$ and $t>\tau$,
\begin{equation*}
\lim_{h \to 0}\frac{1}{h} \hat{P}_{t}(\#(\omega \cap W_h(t))\geq 2 \, | \, \Phi )   =0 . \end{equation*}
\end{lem}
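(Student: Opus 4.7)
The strategy is a union bound over pairs of background particles, combined with the geometric estimate that a single particle's initial data lands in $W_h(t)$ with probability $O(\varepsilon^2 h)$. The Boltzmann-Grad scaling $N\varepsilon^2 = 1$ then absorbs the combinatorial factor $\binom{N}{2}$ and leaves an $O(h^2)$ bound. Conditional on $\Phi \in \mathcal{G}(\varepsilon)$, the $n(\Phi)$ background particles appearing in $\Phi$ have prescribed initial data and, since $\Phi$ is re-collision-free, none of them lies in $W_h(t)$; the remaining $N - n(\Phi)$ particles are conditionally independent, each with common law proportional to $g_0(\bar v)\, \mathbbm{1}_t^\varepsilon[\Phi](\bar x, \bar v) \, \mathrm{d}\bar x \, \mathrm{d}\bar v$. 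A union bound over pairs then yields
\[
\hat{P}_t\bigl(\#(\omega \cap W_h(t)) \geq 2 \,\bigm|\, \Phi \bigr) \leq \binom{N - n(\Phi)}{2} I_h(t)^2,
\]
where $I_h(t)$ is the single-particle quantity from \eqref{eq-iht}.

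To estimate $I_h(t)$, note that since $t > \tau$ and $\Phi$ is non-grazing and re-collision-free, the root evolves with constant velocity $v(\tau)$ throughout $(\tau, T]$. For each fixed $\bar v$, the set of $\bar x \in U$ with $(\bar x, \bar v) \in W_h(t)$ consists of those initial positions whose straight-line trajectory $s \mapsto \bar x + s \bar v$ enters the $\varepsilon$-ball around $x(t) + (s-t) v(\tau)$ for some $s \in (t, t+h)$. A standard computation in the relative frame, writing $\bar y := \bar x + t \bar v - x(t)$ and decomposing along the axis $\bar v - v(\tau)$, shows that, under the additional constraint $|\bar y| \geq \varepsilon$ enforced by $\mathbbm{1}_t^\varepsilon[\Phi]$, the admissible $\bar y$ form a cylinder of cross-section $\pi \varepsilon^2$ and axial length $h|\bar v - v(\tau)|$; the $\varepsilon^3$ endcap is removed. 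Integrating against $g_0$, using $|v(\tau)| \leq V(\varepsilon)$ and the finite first moment \eqref{eq-g0l1assmp}, and noting that the denominator of $I_h(t)$ is bounded below by $1/2$ for $\varepsilon$ small (the excluded tube has measure $O(\varepsilon^2 V(\varepsilon) T) = o(1)$ by \eqref{eq-vassump2}), one obtains
\[
I_h(t) \leq C \varepsilon^2 h \bigl(V(\varepsilon) + 1\bigr).
\]

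Combining the two bounds and applying $N\varepsilon^2 = 1$,
\[
\hat{P}_t\bigl(\#(\omega \cap W_h(t)) \geq 2 \,\bigm|\, \Phi \bigr) \leq \tfrac{1}{2} N^2 C^2 \varepsilon^4 h^2 (V(\varepsilon)+1)^2 \leq C' h^2 V(\varepsilon)^2,
\]
so dividing by $h$ and letting $h \to 0$ gives zero for each fixed $\varepsilon$. The main obstacle is the sharp $O(\varepsilon^2 h)$ slice estimate: a naive bound retaining the $\varepsilon^3$ endcap would, after squaring and dividing by $h$, leave a divergent $\varepsilon^2 / h$ contribution. The crucial point is that $\mathbbm{1}_t^\varepsilon[\Phi]$ excludes precisely the initial data whose particle is already within $\varepsilon$ of the root at time $t$, eliminating the endcap of the collision tube so that only the cylindrical sweep of volume $O(\varepsilon^2 h)$ survives.
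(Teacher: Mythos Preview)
Your proposal is correct and follows essentially the same route as the paper: a union bound over pairs giving $\binom{N-n(\Phi)}{2} I_h(t)^2$ (the paper uses the cruder $N(N-1)I_h(t)^2$), the single-particle bound $I_h(t)\le C\varepsilon^2 h(V(\varepsilon)+\beta)$, the denominator bound $\ge 1/2$ for small $\varepsilon$, and then $N\varepsilon^2=1$ to absorb the combinatorial factor.

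One small correction to your closing commentary: the $\varepsilon^3$ endcap is \emph{not} removed by $\mathbbm{1}_t^\varepsilon[\Phi]$. The paper simply drops $\mathbbm{1}_t^\varepsilon[\Phi]\le 1$ in the numerator and still obtains the sharp bound $\int_U \mathbbm{1}_{W_h(t)}(\bar x,\bar v)\,\mathrm{d}\bar x \le \pi\varepsilon^2\!\int_t^{t+h}|v(s)-\bar v|\,\mathrm{d}s$. The reason is that the pre-collision condition $(v(t'^-)-\bar v)\cdot\nu'>0$ is already built into the definition of $W_h(t)$: a background particle starting inside the $\varepsilon$-ball at time $t$ can only \emph{exit} the sphere, never satisfy the entering condition, so such initial data are automatically excluded from $W_h(t)$ itself. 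Your cylinder computation is right, but the mechanism you credit is the wrong one; $\mathbbm{1}_t^\varepsilon[\Phi]$ plays no role in the numerator estimate here.
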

\begin{proof}
Note that by the inclusion exclusion principle the fact that the background particles are independent,
\begin{align} \label{eq-phatwht2est}
 \hat{P}_{t}(&\#(\omega \cap  W_h(t))\geq 2 \, | \, \Phi ) \leq \sum_{1\leq i < j \leq N-n(\Phi)} \hat{P}_t( (x_i,v_i)\in W_h(t) \textrm{ and } (x_j,v_j) \in W_h(t) \, | \, \Phi) \nonumber \\
 & \leq N(N-1) \hat{P}_t( (x_1,v_1)\in W_h(t) \textrm{ and } (x_2,v_2) \in W_h(t) \, | \, \Phi) \nonumber \\
 &= N(N-1) \hat{P}_t( (x_1,v_1)\in W_h(t) \, | \, \Phi ) ^2.
\end{align}
Recalling \eqref{eq-iht},
\begin{equation} \label{eq-phatwhest}
\hat{P}_t( (x_1,v_1)\in W_h(t) \, | \, \Phi )= I_h(t).
\end{equation}
Now we estimate the right hand side of \eqref{eq-phatwhest} by estimating the numerator and denominator. Firstly by calculating the volume of the appropriate cylinder, for any $\bar{v} \in \mathbb{R}^3$,
\begin{equation} \label{eq-whtest}
\int_{U} \mathbbm{1}_{W_h(t)} (\bar{x},\bar{v}) \, \mathrm{d}\bar{x} \leq \pi \varepsilon^2 \int_t^{t+h} |v(s)-\bar{v}| \, \mathrm{d}s.
\end{equation}
Define
\begin{equation} \label{eq-beta}
\beta := \int_{\mathbb{R}^3} g_0(v)(1+|v|)\,\mathrm{d}v.
\end{equation}
Note that by assumption \eqref{eq-g0l1assmp}, $\beta<\infty$. Since $\Phi \in \mathcal{G}(\varepsilon)$ it follows that $|v(t)| \leq \mathcal{V}(\Phi) \leq V(\varepsilon)$. Using these and \eqref{eq-whtest} we estimate the numerator in \eqref{eq-phatwhest},

\begin{align} \label{eq-whtnumest}
\int_{U \times \mathbb{R}^{3}}    g_0(\bar{v}) \mathbbm{1}_{W_h(t)}& (\bar{x},\bar{v}) \mathbbm{1}_t^\varepsilon[\Phi] (\bar{x},\bar{v}) \, \mathrm{d}\bar{x} \, \mathrm{d}\bar{v}  \leq \int_{U \times \mathbb{R}^{3}}    g_0(\bar{v}) \mathbbm{1}_{W_h(t)} (\bar{x},\bar{v})  \, \mathrm{d}\bar{x} \, \mathrm{d}\bar{v} \nonumber \\
& \leq \int_{\mathbb{R}^3} g_0(\bar{v}) \pi \varepsilon^2 \int_t^{t+h} |v(s)-\bar{v}| \, \mathrm{d}s  \, \mathrm{d}\bar{v}
 \leq \pi \varepsilon^2 \int_{\mathbb{R}^3} g_0(\bar{v})  \int_t^{t+h} |v(s)| + | \bar{v}| \, \mathrm{d}s  \, \mathrm{d}\bar{v} \nonumber \\
& \leq  \pi \varepsilon^2 \int_{\mathbb{R}^3} g_0(\bar{v})  \int_t^{t+h} V(\varepsilon) + | \bar{v}| \, \mathrm{d}s  \, \mathrm{d}\bar{v}
 \leq  \pi \varepsilon^2 \int_{\mathbb{R}^3} g_0(\bar{v})  h\left(V(\varepsilon) + | \bar{v}| \right) \, \mathrm{d}\bar{v} \nonumber \\
& \leq  h \pi \varepsilon^2 \int_{\mathbb{R}^3} g_0(\bar{v})  \left( V(\varepsilon) + | \bar{v}| \right) \, \mathrm{d}\bar{v}
 \leq h \pi \varepsilon^2 (V(\varepsilon)+\beta).
\end{align}
Turning to the denominator of \eqref{eq-iht}. Firstly note that,
\begin{align} \label{eq-whtdenom1}
\int_{ U \times \mathbb{R}^3}  g_0(\bar{v}) \mathbbm{1}_t^\varepsilon[\Phi] (\bar{x},\bar{v}) \, \mathrm{d}\bar{x} \, \mathrm{d}\bar{v} &  = \int_{ U \times \mathbb{R}^3}  g_0(\bar{v}) \left ( 1 - \mathbbm{1}_{W_t(0)} (\bar{x},\bar{v}) \right) \, \mathrm{d}\bar{x} \, \mathrm{d}\bar{v} \nonumber \\
& = 1-  \int_{ U \times \mathbb{R}^3}  g_0(\bar{v})   \mathbbm{1}_{W_t(0)} (\bar{x},\bar{v})  \, \mathrm{d}\bar{x} \, \mathrm{d}\bar{v}.
\end{align}
By using \eqref{eq-whtest}, $t\leq T$ and the same estimates from the numerator estimate,
\begin{align} \label{eq-whtdenom2}
\int_{ U \times {R}^3}  g_0(\bar{v})   \mathbbm{1}_{W_t(0)} & (\bar{x},\bar{v})    \, \mathrm{d}\bar{x} \, \mathrm{d}\bar{v} \nonumber \\
& \leq \pi \varepsilon^2 \int_{\mathbb{R}^3}  g_0(\bar{v})   \int_0^{t} |v(s)-\bar{v}| \, \mathrm{d}s  \, \mathrm{d}\bar{v} \nonumber
 \leq  \pi \varepsilon^2 \int_{\mathbb{R}^3}  g_0(\bar{v})   \int_0^{t} \mathcal{V}(\Phi)+|\bar{v}| \, \mathrm{d}s  \, \mathrm{d}\bar{v} \nonumber \\
&\leq  \pi \varepsilon^2 \int_{\mathbb{R}^3}  g_0(\bar{v})   t \left( V(\varepsilon)+|\bar{v}| \right) \, \mathrm{d}\bar{v}
 \leq  \pi \varepsilon^2 T (V(\varepsilon)+\beta).
\end{align}
Hence for $\varepsilon$ sufficiently small by \eqref{eq-vassump2},
\[ \pi \varepsilon^2 T (V(\varepsilon)+\beta) \leq 1/2, \]
so by \eqref{eq-whtdenom1} and \eqref{eq-whtdenom2},
\begin{equation} \label{eq-whtdenomest}
\int_{ U \times \mathbb{R}^3}  g_0(\bar{v}) \mathbbm{1}_t^\varepsilon[\Phi] (\bar{x},\bar{v}) \, \mathrm{d}\bar{x} \, \mathrm{d}\bar{v}  \geq 1/2.
\end{equation}
Bounds for both the numerator and the denominator of \eqref{eq-phatwhest} have been found in equations \eqref{eq-whtnumest} and \eqref{eq-whtdenomest} respectively. Hence,
\begin{equation} \label{eq-ihtbound}
I_h(t) \leq 2  h \pi \varepsilon^2 (V(\varepsilon)+\beta).
\end{equation}
Substituting this into \eqref{eq-phatwht2est} and recalling \eqref{eq-boltzgrad},
\begin{align*}
\hat{P}_{t}(\#(\omega \cap W_h(t))\geq 2 \, | \, \Phi ) & \leq N(N-1 ) \times 4  h^2 \pi^2 \varepsilon^4 (V(\varepsilon)+\beta)^2 \\
& \leq 4   h^2 \pi^2 N^2  \varepsilon^4 (V(\varepsilon)+\beta)^2 \leq 4 h^2 \pi^2 (V(\varepsilon)+\beta)^2.
\end{align*}
This gives finally that,
\begin{align*}
 \lim_{h \to 0} \frac{1}{h} \left( \hat{P}_{t}(\#(\omega \cap W_h(t))\geq 2 \, | \, \Phi ) \right) & \leq  \lim_{h \to 0} \frac{1}{h}  \times 4 h^2 \pi^2 (V(\varepsilon)+\beta)^2 \\
 & \leq 4  \pi^2 (V(\varepsilon)+\beta)^2  \lim_{h \to 0}  h= 0,
\end{align*}
completing the proof of the lemma.
\end{proof}
The previous lemma shows that the rate of seeing two collisions in a short time converges to zero. We now show that the rate of seeing one collision converges to the required loss term. Before we do this we first estimate the error caused by re-collisions.
\begin{deff} \label{def-bht}
For $\Phi \in \mathcal{G}(\varepsilon)$, $t>\tau$ and $h>0$ recall the definition of $W_h(t)$ in definition~\ref{def-wht} and $\mathbbm{1}_t^\varepsilon[\Phi] (\bar{x},\bar{v})$ \eqref{eq-deff1phi}. Define, $B_{h,t}(\Phi) \subset U \times \mathbb{R}^3$,
\begin{align*}
B_{h,t}(\Phi) := \big\{ (\bar{x},\bar{v})\in U \times \mathbb{R}^3 :    \mathbbm{1}_t^\varepsilon[\Phi] (\bar{x},\bar{v}) = 0 \textrm{ and } \mathbbm{1}_{W_h(t)} (\bar{x},\bar{v}) =1 \big\}.
\end{align*}
Notice that $B_{h,t}(\Phi)$ is the set of all initial positions that a background particle can take such that it collides with the root once during $(0,t)$ and once during $(t,t+h)$.
\end{deff}
\begin{lem} \label{lem-intg0bhtbound}
For $\varepsilon$ sufficiently small, $\Phi \in \mathcal{G}(\varepsilon)$, $t>\tau$ and $h>0$ sufficiently small there exists a $\hat{C}_2(\varepsilon)>0$ depending on $t$ and $\Phi$ with $\hat{C}_2(\varepsilon)= o(1)$ as $\varepsilon$ tends to zero such that,
\[ \int_{B_{h,t}(\Phi)} g_0(\bar{v}) \, \mathrm{d}\bar{x} \, \mathrm{d}\bar{v} = h \varepsilon^2\hat{C}_2(\varepsilon).   \]
\end{lem}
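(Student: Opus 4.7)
My plan is to decompose $B_{h,t}(\Phi)$ according to which straight segment of the root's trajectory carries the prior close approach recorded by $\mathbbm{1}_t^\varepsilon[\Phi]=0$. Label the root's segments on $[0,s']$ by $i=1,\dots,n(\Phi)+1$, with segment $i$ covering $[t_{i-1},t_i]$; here $t_0:=0$, $t_{n(\Phi)+1}:=s'$ is the impending collision time in $(t,t+h)$, and I write $v^{(i)}$ for the root's constant velocity on segment $i$. On each segment $s\mapsto x(s)-\bar{x}-s\bar{v}$ is affine, so its norm is convex. On the final segment the collision condition $(v(t)-\bar{v})\cdot\nu'>0$ forces the squared separation to have strictly negative slope at $s'$, and convexity then keeps the separation strictly above $\varepsilon$ throughout $[\tau,s')$. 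Hence any prior close approach occurs on some segment $i\in\{1,\dots,n(\Phi)\}$. The finitely many background particles that are themselves paired with collisions in $\Phi$ correspond to a measure-zero set of initial data under $g_0(\bar{v})\,\dd\bar{x}\,\dd\bar{v}$ and do not contribute.

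For each such $i$ I would parameterize $W_h(t)$ by $(s',\nu',\bar{v})\in(t,t+h)\times\mathbb{S}^2_+\times\mathbb{R}^3$ via $\bar{x}=x(s')+\varepsilon\nu'-s'\bar{v}$, obtaining the collisional Jacobian $\dd\bar{x}=\varepsilon^2((v(t)-\bar{v})\cdot\nu')_+\,\dd\nu'\,\dd s'$. The constraint that the straight-line background trajectory had a close approach on segment $i$ becomes $|q_i(\theta)+\varepsilon\nu'|\leq\varepsilon$ for some $\theta\in[t_{i-1},t_i]$, where $q_i(\theta):=x(s')-x(\theta)-(s'-\theta)\bar{v}$ is affine in $\theta$ with slope $\bar{v}-v^{(i)}$. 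Equivalently, the admissible $\nu'$ lie in the intersection of $\mathbb{S}^2$ with the unit neighbourhood in $\mathbb{R}^3$ of the rescaled line segment $\{-q_i(\theta)/\varepsilon:\theta\in[t_{i-1},t_i]\}$.

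The key geometric step is the two-tube intersection bound
\[
\mathrm{vol}_{\bar{x}}\bigl(W_h(t)\cap T_{2,i}\bigr)\leq C\pi\varepsilon^2\min\!\bigl(h\,|v(t)-\bar{v}|,\;2\varepsilon/\sin\alpha_i(\bar{v})\bigr),
\]
where $T_{2,i}$ is the $\varepsilon$-tube of close approaches on segment $i$ and $\alpha_i(\bar{v})$ is the angle between $v(t)-\bar{v}$ and $v^{(i)}-\bar{v}$. The first term is the total volume of $W_h(t)$, while the second reflects that two $\varepsilon$-cylinders crossing at angle $\alpha$ overlap in a region of volume $\lesssim\varepsilon^3/\sin\alpha$. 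Integrating against $g_0(\bar{v})\,\dd\bar{v}$ and splitting at a threshold $\sin\alpha_i=\delta(\varepsilon)$, one treats the nearly parallel regime $\sin\alpha_i<\delta$ (where $\bar{v}$ lies in a thin tubular neighbourhood of the one-dimensional line through $v(t)$ and $v^{(i)}$ in velocity space) using the first bound, and the transverse regime using the second; both are controlled via the moment bound \eqref{eq-g0l1assmp} and the $L^\infty$ bound \eqref{eq-g0linf5} on $g_0$. Summing over $i\leq n(\Phi)\leq M(\varepsilon)$ and invoking $M(\varepsilon)\leq\varepsilon^{-1/2}$ and $\varepsilon V(\varepsilon)^3\leq 1/8$ yields the desired form $h\varepsilon^2\hat{C}_2(\varepsilon)$, with $\hat{C}_2(\varepsilon)$ an explicit positive power of $\varepsilon$.

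The main obstacle is the small-angle analysis: controlling the contribution of $\bar{v}$ close to the one-dimensional singular line $\{v(t)+\lambda(v^{(i)}-v(t)):\lambda\in\mathbb{R}\}$, on which $\alpha_i=0$ and the transverse bound degenerates, requires optimising the threshold $\delta(\varepsilon)$ and simultaneously using both the essential-supremum bound with $(1+|v|^4)$ weight and the $L^1$-moment bound on $g_0$ to convert the volume of a tubular neighbourhood of this line into a factor that is integrable against $g_0$ and still leaves $\hat{C}_2(\varepsilon)=o(1)$ after summation over the at most $M(\varepsilon)$ segments.
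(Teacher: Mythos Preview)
Your segment-wise decomposition and two-tube intersection bound in $\bar x$-space is a genuinely different route from the paper's argument, which instead splits by the \emph{time} $s\in(0,t)$ of the earlier close approach (at a threshold $\delta=\varepsilon^{1/3}$) and works in \emph{velocity} space: from the two approach conditions one solves $\bar v=\frac{x(\sigma)-x(s)}{\sigma-s}+O(\varepsilon/\delta)$, so for $s\le t-\delta$ the admissible $\bar v$ lie in an $O(\varepsilon/\delta)$-tube around the curve $r(s)=\frac{x(t)-x(s)}{t-s}$, whose arclength is bounded explicitly; one then integrates $g_0$ over this tube and over the $\bar x$-cylinder $C(\bar v)$. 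The near-time regime $s\in(t-\delta,t)$ is handled separately by a wrap-around argument on the torus. Your angle-based crossing estimate is attractive because it avoids estimating the curve length and its singular integrand $1/(t-s)^2$, and it packages the geometry neatly per segment.

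There is, however, a genuine gap. Your convexity argument that the separation stays strictly above $\varepsilon$ on $[\tau,s')$ is valid in $\mathbb{R}^3$ but not on the torus $U=[0,1]^3$: the torus distance is a \emph{minimum} of translated convex parabolas and is itself not convex. A background particle with $|\bar v-v(t)|$ of order $1/(t-\tau)$ or larger can wrap around and re-approach the root on the final segment, so the case $i=n(\Phi)+1$ cannot be discarded. This is precisely the paper's $B_{h,t}^2(\Phi)$ contribution, and your scheme has no analogue for it: for the final segment $v^{(n(\Phi)+1)}=v(t)$, the angle $\alpha_i$ vanishes identically, the transverse bound $\varepsilon/\sin\alpha_i$ is useless, and your ``line through $v(t)$ and $v^{(i)}$'' degenerates to a point, so the near-parallel volume argument collapses. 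To repair this you would need a separate estimate showing that wrap-around on the final segment forces $|\bar v-v(t)|\gtrsim 1/(t-\tau)$ (or $\gtrsim 1/\delta$ after restricting to a short window), and then exploit the moment assumption \eqref{eq-g0l1assmp} on $g_0$ to make that contribution $o(1)$ after multiplying by $h\varepsilon^2$; this is exactly what the paper does. A similar periodicity issue lurks in your earlier-segment analysis (each $T_{2,i}$ may wrap and meet $W_h(t)$ in several copies), though there the first branch $h|v(t)-\bar v|$ of your minimum suffices to absorb the multiplicity.
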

\begin{proof}
Recall that $(x(t),v(t))$ is the trajectory of the tagged particle defined by the tree $\Phi$. $B_{h,t}(\Phi)$ is given by,
\begin{align*}
B_{h,t}(\Phi) = \big\{ (\bar{x}, \bar{v}) : & \,   \exists s\in (0,t), \sigma \in (t,t+h), \nu_1,\nu_2 \in \mathbb{S}^2  \textrm{ such that } \\
&  \bar{x} + s \bar{v} - x(s) =  \varepsilon \nu_1, \, \,  \bar{x} + \sigma \bar{v} - x(\sigma) =\varepsilon \nu_2    \textrm{ and } \\
& (v(s)-\bar{v})\cdot \nu_1 >0, \, \, (v(\sigma)-\bar{v})\cdot \nu_2 >0 \big\}.
\end{align*}
Define $\delta := \varepsilon^{1/3}$. We split the set $B_{h,t}(\Phi)$ into two parts, the first, denoted $B_{h,t}^\delta(\Phi)$, which considers $s\in(0,t-\delta]$ and the second, denoted $B_{h,t}^2(\Phi)$, which considers $s \in (t-\delta,t)$. We evaluate the bounds on these two sets separately.

Consider $s\in (0,t-\delta]$ and $ \sigma\in(t,t+h)$ fixed. The conditions defined by $B_{h,t}$ require that,
\[ \bar{v} = \frac{x(\sigma)- x(s)}{\sigma - s} + \frac{\varepsilon\nu_2 - \varepsilon \nu_1}{\sigma - s}. \]
For $\sigma$ fixed this implies that $\bar{v}$ is contained in a cylinder of radius $2\varepsilon/\delta$ around the curve defined by $\frac{x(\sigma)- x(s)}{\sigma - s}$ for $s \in (0,t-\delta]$. Recalling the definition of $V(\varepsilon)$ from definition~\ref{def-goodtrees}, taking $h \ll \varepsilon/ V(\varepsilon)$ implies $h|v(\sigma)| \ll \varepsilon$ so the dependence on $\sigma \in (t,t+h)$ gives only a small perturbation around the curve defined by $\frac{x(t)- x(s)}{t - s}$ for $s \in (0,t-\delta]$. Hence $\bar{v}$ is contained in the cylinder with radius $4\varepsilon/\delta$ around the piecewise differentiable curve $r(s):=\frac{x(t)- x(s)}{t - s}$ for $s \in (0,t-\delta]$.

Denote this cylinder in $\mathbb{R}^3$ by $E=E(t,\Phi,\delta)$. We seek a bound on the volume of $E$, $|E|$. First consider the length of the curve $r$. For almost all $s \in (0,t-\delta)$,
\[  \frac{\mathrm{d}}{\mathrm{d}s}r(s )= \frac{x(t)-x(s)}{(t-s)^2} + \frac{v(s)}{t-s} .\]
Hence,
\begin{align*}
|\frac{\mathrm{d}}{\mathrm{d}s}r(s )| & \leq \frac{|x(t)-x(s)|}{(t-s)^2}+ \frac{|v(s)|}{t-s}  \leq \frac{3}{(t-s)^2} + \frac{V(\varepsilon)}{t-s}.
\end{align*}
Thus the length of the curve is bounded by,
\begin{align*}
\int_0^{t-\delta} |\frac{\mathrm{d}}{\mathrm{d}s}r(s )| \, \mathrm{d}s &  \leq \int_0^{t-\delta} \frac{3}{(t-s)^2} + \frac{V(\varepsilon)}{t-s} \, \mathrm{d}s
 = \frac{3}{\delta} - \frac{3}{t} - V(\varepsilon)(\log(\delta) - \log(t)).
\end{align*}
Therefore for some $C>0$,
\begin{equation} \label{eq-volumeE}
|E| \leq  C \left(\frac{\varepsilon}{\delta}\right)^2 \left(\frac{3}{\delta} - \frac{3}{t} - V(\varepsilon)(\log(\delta) - \log(t)) \right).
\end{equation}
Noting that $x(\sigma) = x(t)+(\sigma-t) v(t)$, for $\bar{v}$ given, $(\bar{x},\bar{v}) \in B_{h,t} (\Phi)$ requires that,
\begin{align*}
\bar{x} & = x(\sigma) -\sigma\bar{v} + \varepsilon\nu_2 = x(t) + (\sigma -t) - \sigma \bar{v} + \varepsilon\nu_2 \\
& = x(t)-t\bar{v} +(\sigma- t)(v(t)-\bar{v})+\varepsilon\nu_2.
\end{align*}
Hence for $\bar{v}$ given $\bar{x}$ is contained in cylinder of radius $\varepsilon$ and length $h|v(t)-\bar{v}|$. Denote this cylinder by $C(\bar{v})$. By \eqref{eq-g0linf5} or \eqref{eq-g0l4eta}, for constants $C$ that change on each line,
\begin{align*}
\int_{B_{h,t}^\delta(\Phi) }g(\bar{v}) \, \mathrm{d}\bar{x} \, \mathrm{d}\bar{v} & \leq \int_E g(\bar{v}) \int_{C(\bar{v})} \, \mathrm{d}\bar{x} \, \mathrm{d}\bar{v} \leq C \varepsilon^2 h \int_E g(\bar{v}) |v(t)-\bar{v}| \, \mathrm{d}\bar{v} \\
&  \leq C \varepsilon^2 h \int_E g(\bar{v}) (V(\varepsilon)+ |\bar{v}|) \, \mathrm{d}\bar{v}  \leq C \varepsilon^2 h (V(\varepsilon)+1 ) |E|.
 \end{align*}
It remains to show that $(V(\varepsilon)+1)|E|$ is $o(1)$ as $\varepsilon$ tends to zero. Recall \eqref{eq-vassump2}, that $\delta = \varepsilon^{1/3}$ and \eqref{eq-volumeE},
 \begin{align*}
(V(\varepsilon)+1)|E|
& = (V(\varepsilon)+1) \times C  \left(\frac{\varepsilon}{\delta}\right)^2 \left(\frac{3}{\delta} - \frac{3}{t} - V(\varepsilon)(\log(\delta) - \log(t)) \right) \\
& \leq C\left(\frac{1}{2\varepsilon^{1/3}}+1\right)\varepsilon^{4/3} \left(\frac{3}{\varepsilon^{1/3}} +\frac{1}{2\varepsilon^{1/3}}(|\log\varepsilon^{1/3}|+|\log t|)\right) \\
& \leq C \left(\frac{1}{2\varepsilon^{1/3}}+1\right) \left(3\varepsilon +\frac{1}{2}\varepsilon\left(\frac{1}{3}|\log\varepsilon|+|\log t|\right)\right)
\end{align*}
as required.

Now consider the second part of $B_{h,t}(\Phi)$ for $s \in (t-\delta,t)$ denoted $B_{h,t}^2(\Phi)$. Since $\Phi$ is fixed, $t > \tau$ and $\delta = \varepsilon^{1/3}$ let $\varepsilon$ sufficiently small such that $t-\delta >\tau$. Hence for $s \in (t-\delta,t)$, $v(s)=v(t)$. We change the velocity space coordinates so that $v(t)=0$. If we require that a particle starting at $(\bar{x},\bar{v})$ collides with the tagged particle in $(t-\delta,t)$ and again in $(t,t+h)$ we require in the new coordinates that either $\bar{v}=0$ or that $|\bar{v}|$ is sufficiently large so that the background particle wraps round the torus having travelled at least distance $3/4$ (for $\varepsilon$ sufficiently small) within time $(\delta +h)$ to re-collide with the tagged particle. That is,
\[ |\bar{v}| \geq \frac{3}{4(\delta + h)} . \]
For $h \leq 1/4  \delta $, this implies $|\bar{v}| \geq \frac{3}{5\delta}$.
Changing back to the original coordinates, this means it is required that $\bar{v}=v(t)$ or $|\bar{v}-v(t)| \geq 3/5\delta$.

For a given $\bar{v}$, the same conditions as before on the $\bar{x}$ coordinate must hold and so $\bar{x}$ is in the cylinder $C(\bar{v})$. Recalling \eqref{eq-g0l1assmp}, \eqref{eq-vassump2},  $\delta=\varepsilon^{1/3}$ and that $|v(t)| \leq V(\varepsilon) \leq 1/2\varepsilon^{-1/3} = 1/2\delta$ it follows for constants $C$ that change on each line,
\begin{align*}
\int_{B_{h,t}^2(\Phi)} g_0(\bar{v}) \, &\mathrm{d}\bar{x} \, \mathrm{d}\bar{v}  \leq \int_{\mathbb{R}^3 \setminus B_{3/5\delta}(v(t))} g_0(\bar{v}) \int_{C(\bar{v})} \, \mathrm{d}\bar{x} \, \mathrm{d}\bar{v}
 \leq C\varepsilon^2h \int_{\mathbb{R}^3 \setminus B_{3/5\delta}(v(t))} g_0(\bar{v})|v(t)-\bar{v}| \, \mathrm{d}\bar{v} \\
& \leq C\varepsilon^2h \int_{\mathbb{R}^3 \setminus B_{3/5\delta}(v(t))} g_0(\bar{v})(V(\varepsilon)+|\bar{v}|) \, \mathrm{d}\bar{v}
 \leq C\varepsilon^2h \int_{\mathbb{R}^3 \setminus B_{1/10\delta}(0)} g_0(\bar{v})(V(\varepsilon)+|\bar{v}|) \, \mathrm{d}\bar{v} \\
& \leq C\varepsilon^2h  \int_{\mathbb{R}^3 \setminus B_{1/10\delta}(0)} g_0(\bar{v})(100\delta^2 |\bar{v}|^2 V(\varepsilon)+10\delta|\bar{v}|^2)  \, \mathrm{d}\bar{v}  \leq C\varepsilon^2h  (10\delta^2 V(\varepsilon)+\delta )\\
& \leq C\varepsilon^2 h \left(5 \frac{\varepsilon^{2/3}}{\varepsilon^{1/3}}+\varepsilon^{1/3}  \right)  =  C\varepsilon^2 h \times  \varepsilon^{1/3},
\end{align*}
as required. Since $B_{h,t}(\Phi) = B_{h,t}^\delta(\Phi) \cup B_{h,t}^2(\Phi)$ the proof of the lemma is complete.

\end{proof}

\begin{lem} \label{lem-phwht1coll}
For $\varepsilon$ sufficiently small, $\Phi \in \mathcal{G}(\varepsilon)$, $t>\tau$ and $\hat{C}_2(\varepsilon)$ as in the above lemma,
\begin{align*}
\lim_{h \to 0}\frac{1}{h} \hat{P}_{t} & (\#(\omega \cap W_h(t)) = 1 \, | \, \Phi )   \\
& =(1- \gamma(t)) \frac{\int_{\mathbb{S}^{2}}  \int_{\mathbb{R}^3}   g_0(\bar{v})[(v(t)-\bar{v})\cdot \nu]_+ \, \mathrm{d}\bar{v} \, \mathrm{d}\nu - \hat{C}_2(\varepsilon) }{\int_{ U \times {R}^3}  g_0(\bar{v}) \mathbbm{1}_t^\varepsilon[\Phi] (\bar{x},\bar{v}) \, \mathrm{d}\bar{x} \, \mathrm{d}\bar{v}  }.
\end{align*}
\end{lem}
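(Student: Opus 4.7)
The plan is to reduce the event $\{\#(\omega\cap W_h(t))=1\}$ to a sum of single-particle events, compute the infinitesimal collision rate of one such particle, and combine using the Boltzmann-Grad scaling to extract the $(1-\gamma(t))$ prefactor. Conditional on the tree $\Phi$, the $N-n(\Phi)$ background particles not involved in any collision of $\Phi$ are i.i.d.\ with density proportional to $g_0(\bar v)\mathbbm{1}_t^\varepsilon[\Phi](\bar x,\bar v)$ on $U\times\mathbb{R}^3$, so each such particle lies in $W_h(t)$ with conditional probability exactly $I_h(t)$. Combining the Bernoulli formula with Lemma~\ref{lem-phwht2coll} and the estimate \eqref{eq-ihtbound} together with $N\varepsilon^2=1$, which yields $(N-n(\Phi))I_h(t)=O(h)$, gives
\[ \hat P_t\bigl(\#(\omega\cap W_h(t))=1 \,\big|\, \Phi\bigr) = (N-n(\Phi))\,I_h(t) + o(h). \]

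Next I would evaluate $\lim_{h\to 0} h^{-1}I_h(t)$. Since the denominator of $I_h(t)$ is $h$-independent, only the numerator matters. Observing that $W_h(t)\cap\{\mathbbm{1}_t^\varepsilon[\Phi]=0\}=B_{h,t}(\Phi)$ by the definitions, Lemma~\ref{lem-intg0bhtbound} reduces the numerator to
\[ \int_{U\times\mathbb{R}^3} g_0(\bar v)\,\mathbbm{1}_{W_h(t)}(\bar x,\bar v)\,\mathrm{d}\bar x\,\mathrm{d}\bar v \;-\; h\varepsilon^2\hat C_2(\varepsilon). \]
For the remaining integral I would change variables at fixed $\bar v$, parametrising $W_h(t)$ by $(s,\nu)\in(t,t+h)\times\mathbb{S}^2$ subject to $(v(s^-)-\bar v)\cdot\nu>0$ through $\bar x = x(s)-s\bar v+\varepsilon\nu$. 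Because $t>\tau$, for $h$ small enough no collision occurs in $(t,t+h)$, so $v(\cdot)\equiv v(t)$ on this interval and the Jacobian of $(s,\nu)\mapsto\bar x$ equals $\varepsilon^2[(v(t)-\bar v)\cdot\nu]_+$, giving
\[ \lim_{h\to 0}\frac{1}{h}\int_{U\times\mathbb{R}^3} g_0(\bar v)\,\mathbbm{1}_{W_h(t)}(\bar x,\bar v)\,\mathrm{d}\bar x\,\mathrm{d}\bar v = \varepsilon^2\int_{\mathbb{R}^3}\int_{\mathbb{S}^2} g_0(\bar v)[(v(t)-\bar v)\cdot\nu]_+\,\mathrm{d}\nu\,\mathrm{d}\bar v. \]

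Combining the two displays and factoring out $(N-n(\Phi))\varepsilon^2 = 1-n(\Phi)\varepsilon^2 = 1-\gamma(t)$, using the Boltzmann-Grad scaling \eqref{eq-boltzgrad} and the definition of $\gamma$ for $t>\tau$, produces exactly the right-hand side of the claim. I expect the main technical obstacle to be the Jacobian computation: one must verify that $(s,\nu)\mapsto\bar x$ is a diffeomorphism of $(t,t+h)\times\{\nu:(v(t)-\bar v)\cdot\nu>0\}$ onto the slice $W_h(t)\cap(U\times\{\bar v\})$ for $h$ small, and that residual velocity fluctuations of the root trajectory near $s=t$ contribute only $o(h)$ to the volume. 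The non-grazing condition, the uniform bound $\mathcal V(\Phi)\leq V(\varepsilon)$ from $\Phi\in\mathcal G(\varepsilon)$, and the strict inequality $t>\tau$ together make this routine rather than subtle.
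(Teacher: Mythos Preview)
Your proposal is correct and follows essentially the same route as the paper. The paper writes the exact Bernoulli probability $(N-n(\Phi))I_h(t)(1-I_h(t))^{N-n(\Phi)-1}$, expands it binomially and uses \eqref{eq-ihtbound} to kill all but the $j=0$ term, then splits the numerator of $I_h(t)$ via $B_{h,t}(\Phi)$ exactly as you do; your reduction to $(N-n(\Phi))I_h(t)+o(h)$ is an equivalent shortcut, and your explicit Jacobian argument for the $W_h(t)$ volume makes precise what the paper states in one line.
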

\begin{proof}
Since the initial data for each background particle is  independent of the other background particles,
\begin{align} \label{eq-omegawht1}
\hat{P}_{t}(\#(\omega \cap & W_h(t)) = 1 \, | \, \Phi ) =\sum_{i=1}^{N-n(\Phi)} \hat{P}_t \Big( (x_i,v_i) \in W_h(t) \textrm{ and } (x_1,v_1), \dots,  \nonumber \\
& \qquad \qquad (x_{i-1},v_{i-1}),(x_{i+1},v_{i+1}), \dots,  (x_{N-n(\Phi)},v_{N-n(\Phi)}) \notin W_h(t) \, | \, \Phi  \Big) \nonumber \\
&
= (N-n(\Phi))\hat{P}_t((x_1,v_1) \in W_h(t) \, | \, \Phi)\hat{P}_t((x_2,v_2) \notin W_h(t) \, | \, \Phi)^{N-n(\Phi) -1}\nonumber \\
& = (N-n(\Phi))I_h(t) \left( 1-I_h(t) \right)^{N-n(\Phi) -1} \nonumber \\& = (N-n(\Phi))I_h(t) \sum_{j=0}^{N-n(\Phi)-1} (-1)^{j} \binom{N-n(\Phi)-1}{j} I_h(t)^j \nonumber \\
& = (N-n(\Phi))\sum_{j=0}^{N-n(\Phi)-1} (-1)^{j} \binom{N-n(\Phi)-1}{j} I_h(t)^{j+1}.
\end{align}
By \eqref{eq-ihtbound},
\[ \lim_{h \to 0} \frac{1}{h}I_h(t)^2 = 0. \]
Hence dividing \eqref{eq-omegawht1} by $h$ and taking $h$ to zero we see that all terms in the sum for $j\geq 1$ tend to zero, leaving only the contribution from the term $j=0$. Hence,
\begin{align} \label{eq-wht1calc}
\lim_{h \to 0} \frac{1}{h} \hat{P}_{t} & (\#(\omega \cap  W_h(t)) = 1 \, | \, \Phi ) \nonumber \\
& = \lim_{h \to 0} \frac{1}{h} (N-n(\Phi))\sum_{j=0}^{N-n(\Phi)-1} (-1)^{j} \binom{N-n(\Phi)-1}{j} I_h(t)^{j+1}
= \lim_{h \to 0} \frac{1}{h} (N-n(\Phi)) I_h(t)  .
\end{align}
It remains to investigate,
\[ \lim_{h \to 0} \frac{1}{h}  I_h(t) = \lim_{h \to 0} \frac{1}{h} \frac{\int_{U \times \mathbb{R}^{3}}    g_0(\bar{v}) \mathbbm{1}_{W_h(t)} (\bar{x},\bar{v}) \mathbbm{1}_t^\varepsilon[\Phi] (\bar{x},\bar{v}) \, \mathrm{d}\bar{x} \, \mathrm{d}\bar{v} }{\int_{ U \times \mathbb{R}^3}  g_0(\bar{v}) \mathbbm{1}_t^\varepsilon[\Phi] (\bar{x},\bar{v}) \, \mathrm{d}\bar{x} \, \mathrm{d}\bar{v}  }.    \]
For $B_{h,t}(\Phi)$ as defined in definition~\ref{def-bht} we have,
\begin{align*}
\int_{U \times \mathbb{R}^{3}}    g_0(\bar{v}) &  \mathbbm{1}_{W_h(t)} (\bar{x},\bar{v}) \mathbbm{1}_t^\varepsilon[\Phi] (\bar{x},\bar{v}) \, \mathrm{d}\bar{x} \, \mathrm{d}\bar{v}  \\
& = \int_{U \times \mathbb{R}^{3}}    g_0(\bar{v})  \mathbbm{1}_{W_h(t)} (\bar{x},\bar{v})  \, \mathrm{d}\bar{v}  - \int_{B_{h,t}(\Phi)}    g_0(\bar{v})  \, \mathrm{d}\bar{x} \, \mathrm{d}\bar{v}
\end{align*}
It then follows from lemma~\ref{lem-intg0bhtbound},
\begin{align*}
\lim_{h \to 0} \frac{1}{h}  I_h(t) & = \lim_{h \to 0} \frac{1}{h} \frac{\int_{U \times \mathbb{R}^{3}}    g_0(\bar{v}) \mathbbm{1}_{W_h(t)} (\bar{x},\bar{v}) \mathbbm{1}_t^\varepsilon[\Phi] (\bar{x},\bar{v}) \, \mathrm{d}\bar{x} \, \mathrm{d}\bar{v} }{\int_{ U \times \mathbb{R}^3}  g_0(\bar{v}) \mathbbm{1}_t^\varepsilon[\Phi] (\bar{x},\bar{v}) \, \mathrm{d}\bar{x} \, \mathrm{d}\bar{v}  } \\
& = \lim_{h \to 0} \frac{1}{h} \left( \frac{h  \varepsilon^2 \int_{\mathbb{S}^2}  \int_{\mathbb{R}^{3}}    g_0(\bar{v})  [(v(t)-\bar{v})\cdot \nu ]_+ \,   \mathrm{d}\bar{v} \, \mathrm{d}\nu -h\varepsilon^2\hat{C}_2(\varepsilon)}{\int_{ U \times \mathbb{R}^3}  g_0(\bar{v}) \mathbbm{1}_t^\varepsilon[\Phi] (\bar{x},\bar{v}) \, \mathrm{d}\bar{x} \, \mathrm{d}\bar{v}} \right) \\
& = \varepsilon^2 \frac{   \int_{\mathbb{S}^2}  \int_{\mathbb{R}^{3}}    g_0(\bar{v})  [(v(t)-\bar{v})\cdot \nu ]_+ \,   \mathrm{d}\bar{v} \, \mathrm{d}\nu - \hat{C}_2(\varepsilon) }{\int_{ U \times \mathbb{R}^3}  g_0(\bar{v}) \mathbbm{1}_t^\varepsilon[\Phi] (\bar{x},\bar{v}) \, \mathrm{d}\bar{x} \, \mathrm{d}\bar{v}}.
\end{align*}
Substituting this into \eqref{eq-wht1calc},
\begin{align*}
\lim_{h \to 0} \frac{1}{h} \hat{P}_{t} & (\#(\omega \cap  W_h(t)) = 1 \, | \, \Phi )  = \lim_{h \to 0} \frac{1}{h} (N-n(\Phi)) I_h(t)  \\
& = (N-n(\Phi))\varepsilon^2 \frac{   \int_{\mathbb{S}^2}  \int_{\mathbb{R}^{3}}    g_0(\bar{v})  [(v(t)-\bar{v})\cdot \nu ]_+ \,   \mathrm{d}\bar{v} \, \mathrm{d}\nu - \hat{C}_2(\varepsilon) }{\int_{ U \times \mathbb{R}^3}  g_0(\bar{v}) \mathbbm{1}_t^\varepsilon[\Phi] (\bar{x},\bar{v}) \, \mathrm{d}\bar{x} \, \mathrm{d}\bar{v}} \\
& = (1-\gamma(t))\frac{   \int_{\mathbb{S}^2}  \int_{\mathbb{R}^{3}}    g_0(\bar{v})  [(v(t)-\bar{v})\cdot \nu ]_+ \,   \mathrm{d}\bar{v} \, \mathrm{d}\nu- \hat{C}_2(\varepsilon) }{\int_{ U \times \mathbb{R}^3}  g_0(\bar{v}) \mathbbm{1}_t^\varepsilon[\Phi] (\bar{x},\bar{v}) \, \mathrm{d}\bar{x} \, \mathrm{d}\bar{v}},
\end{align*}
which proves the lemma.
\end{proof}
With these estimates we can now prove that the loss term of \eqref{eq-emp} holds.
\begin{lem} \label{lem-emp2}
Under the assumptions and set up of theorem~\ref{thm-emp}, for $t > \tau$,
\[ \partial_t \hat{P}_t(\Phi) = (1-\gamma(t))\hat{\mathcal{Q}}_t^- [\hat{P}_t] (\Phi). \]
\end{lem}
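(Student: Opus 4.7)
The plan is to turn the time derivative of $\hat{P}_t(\Phi)$ into a conditional survival rate and then identify that rate with $(1-\gamma(t))\hat{\mathcal{Q}}_t^-[\hat P_t](\Phi)$ by invoking the two rate lemmas (Lemma~\ref{lem-phwht2coll} and Lemma~\ref{lem-phwht1coll}) already established. Fix $\Phi \in \mathcal{G}(\varepsilon)$ and $t>\tau$. Since the tree has no collisions recorded in $(\tau, T]$ after time $\tau$, the event of realising $\Phi$ up to time $t+h$ decomposes, for $h$ small, as the event of realising $\Phi$ up to time $t$ \emph{and} none of the $N-n(\Phi)$ remaining background particles entering the collision set $W_h(t)$ of Definition~\ref{def-wht}. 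Since the background particles move freely and have mutually independent initial data, this product structure holds at the level of densities, and by Lemma~\ref{lem-phatabscts} we may work with the density $\hat{P}_t(\Phi)$ directly, writing
\begin{equation*}
\hat{P}_{t+h}(\Phi) \;=\; \hat{P}_t(\Phi)\,\bigl(1-\hat{P}_t(\#(\omega\cap W_h(t))\geq 1\mid \Phi)\bigr).
\end{equation*}

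Next I would split the complementary event into "exactly one collision" and "at least two collisions":
\begin{equation*}
\hat{P}_t(\#(\omega\cap W_h(t))\geq 1\mid \Phi)
= \hat{P}_t(\#(\omega\cap W_h(t))=1\mid \Phi) + \hat{P}_t(\#(\omega\cap W_h(t))\geq 2\mid \Phi).
\end{equation*}
Dividing by $h$ and passing to the limit $h\downarrow 0$, Lemma~\ref{lem-phwht2coll} shows the second term contributes nothing, while Lemma~\ref{lem-phwht1coll} identifies the first-order rate as
\begin{equation*}
(1-\gamma(t))\,\frac{\displaystyle\int_{\mathbb{S}^2}\int_{\mathbb{R}^3} g_0(\bar v)\,[(v(t)-\bar v)\cdot\nu]_+\,\mathrm{d}\bar v\,\mathrm{d}\nu \;-\;\hat{C}_2(\varepsilon)}{\displaystyle\int_{U\times\mathbb{R}^3}g_0(\bar v)\,\mathbbm{1}_t^\varepsilon[\Phi](\bar x,\bar v)\,\mathrm{d}\bar x\,\mathrm{d}\bar v}.
\end{equation*}
Using that $v(t)=v(\tau)$ for $t>\tau$, this ratio is exactly $(1-\gamma(t))\hat{\mathcal{Q}}_t^-[\hat P_t](\Phi)/\hat P_t(\Phi)$ by the definition of $\hat{\mathcal{Q}}_t^-$. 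Multiplying through by $\hat{P}_t(\Phi)$ yields the claimed identity for $\partial_t \hat{P}_t(\Phi)$, after verifying that the right and left derivatives agree (the symmetric argument at $t-h$ uses the same rate lemmas since the tree is fixed and $t>\tau$).

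The main obstacle, and the reason all the preliminary work was needed, is the justification of the product/independence step in the first display: one has to be sure that conditioning on the realised history $\Phi$ up to time $t$ leaves the remaining background particles distributed independently according to their prior laws, restricted to the complement of the observed initial positions. This is where non-re-collision ($\Phi\in R(\varepsilon)$), non-grazing, and the cylinder estimates in the denominator $\int g_0\,\mathbbm{1}_t^\varepsilon[\Phi]$ (already bounded below by $1/2$ in \eqref{eq-whtdenomest}) all play a role, and the error term $\hat{C}_2(\varepsilon)$ from Lemma~\ref{lem-intg0bhtbound} tracks the small correction caused by potential re-collisions of a single background particle. The remaining steps are short algebraic manipulations that match the limit with the definition of $\hat{\mathcal{Q}}_t^-$.
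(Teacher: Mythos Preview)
Your proposal is correct and follows essentially the same route as the paper's proof: write $\hat{P}_{t+h}(\Phi)=\hat{P}_t(\Phi)\bigl(1-\hat{P}_t(\#(\omega\cap W_h(t))\geq 1\mid\Phi)\bigr)$, split the conditional probability into the ``exactly one'' and ``at least two'' events, and invoke Lemmas~\ref{lem-phwht2coll} and~\ref{lem-phwht1coll} to pass to the limit. Your additional remarks on the product/independence structure and on $v(t)=v(\tau)$ for $t>\tau$ make explicit points that the paper leaves implicit, but the argument is the same.
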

\begin{proof}
We calculate, for $t>\tau$,
\[ \partial_t \hat{P}_t (\Phi) = \lim_{h \to 0} \frac{1}{h} \left(\hat{P}_{t+h}(\Phi) -  \hat{P}_t(\Phi) \right). \]
Noting that,
\[ \hat{P}_{t+h}(\Phi) = \left(1-\hat{P}_{t}\left(\#(\omega \cap W_h(t))>0 \, | \, \Phi \right) \right) \hat{P}_t(\Phi). \]
It follows,
\begin{equation*}
\frac{1}{h}\left( \hat{P}_{t+h}(\Phi) -  \hat{P}_t(\Phi)\right) = -\frac{1}{h}\hat{P}_{t}(\#(\omega \cap W_h(t))>0 \, | \, \Phi )  \hat{P}_t(\Phi).
\end{equation*}
Using lemma~\ref{lem-phwht2coll} and lemma~\ref{lem-phwht1coll},
\begin{align*}
\partial_t \hat{P}_t (\Phi) & = \lim_{h \to 0} \frac{1}{h} \left(\hat{P}_{t+h}(\Phi) -  \hat{P}_t(\Phi)\right) \\
& = - \lim_{h \to 0} \frac{1}{h}\hat{P}_{t}(\#(\omega \cap W_h(t))>0 \, | \, \Phi )  \hat{P}_t(\Phi)
= -\hat{P}_t(\Phi) \lim_{h \to 0} \frac{1}{h}\hat{P}_{t}(\#(\omega \cap W_h(t))=1 \, | \, \Phi )    \\
&= -\hat{P}_t(\Phi)  (1- \gamma(t)) \frac{\int_{\mathbb{S}^{2}}  \int_{\mathbb{R}^3}   g_0(\bar{v})[(v(t)-\bar{v})\cdot \nu]_+ \, \mathrm{d}\bar{v} \, \mathrm{d}\nu - \hat{C}_2(\varepsilon) }{\int_{ U \times {R}^3}  g_0(\bar{v}) \mathbbm{1}_t^\varepsilon[\Phi] (\bar{x},\bar{v}) \, \mathrm{d}\bar{x} \, \mathrm{d}\bar{v}  }  = (1- \gamma(t))\hat{\mathcal{Q}}_t^- [\hat{P}_t] (\Phi),
\end{align*}
which proves the lemma.
\end{proof}
We next move to proving the gain term in \eqref{eq-emp}.
\begin{lem} \label{lem-emp3}
Under the assumptions and set up of theorem~\ref{thm-emp}, for $n(\Phi)\geq 1$
\[ \hat{P}_\tau (\Phi) =(1-\gamma(\tau))\hat{P}_\tau(\bar{\Phi})  \frac{g_0(v')[(v(\tau^-)-v')\cdot \nu ]_+}{1- \pi \varepsilon^2 \int_0^\tau |v(s)-v'| \, \mathrm{d}s + \hat{C}_1\varepsilon^3}.\]
For some $\hat{C}_1>0$ depending only on $\Phi$.
\end{lem}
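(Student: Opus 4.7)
The plan is to compute the jump of $\hat P_t(\Phi)$ at the instant of the last collision $\tau$ by a direct conditional-probability argument, in the same spirit as the absolute continuity argument in Lemma~\ref{lem-phatabscts}. Roughly, I want to express
\[ \hat P_\tau(\Phi) = \hat P_\tau(\bar\Phi)\cdot \rho(\tau,\nu,v'), \]
where $\rho(\tau,\nu,v')$ is the conditional density (with respect to $\mathrm d t\,\mathrm d\nu\,\mathrm d v'$) of seeing the new collision marker given that the pruned history $\bar\Phi$ has been realised. The numerator $g_0(v')[(v(\tau^-)-v')\cdot\nu]_+$ then arises from Lebesgue disintegration together with a Jacobian computation, and the denominator $1-\pi\varepsilon^2\int_0^\tau|v(s)-v'|\,\mathrm d s+\hat C_1\varepsilon^3$ from the normalisation of that conditional probability.

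The first step is to single out which of the $N-n(\bar\Phi)$ background particles whose trajectories have not yet interacted with the tagged one realises the final collision. By exchangeability and the indistinguishability of background particles, this factor contributes $N-n(\bar\Phi)$, and combining it with the Boltzmann--Grad scaling $N\varepsilon^2=1$ yields $(N-n(\bar\Phi))\varepsilon^2 = 1-\gamma(\tau)$. Next I would parametrise the chosen background particle by its initial data $(\bar x,v')$ and, at fixed $v'$, apply the change of variables $\bar x\mapsto (\tau,\nu)$ defined by $\bar x+\tau v' = x(\tau^-)+\varepsilon\nu$. This is precisely the map whose Jacobian was computed in Lemma~\ref{lem-phatabscts}: its absolute value is $\varepsilon^2\,[(v(\tau^-)-v')\cdot\nu]_+$ (the positivity cut-off coming from the requirement that the collision be physical, i.e.\ non-grazing and incoming, which is guaranteed since $\Phi\in\mathcal{G}(\varepsilon)$). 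Multiplying by $g_0(v')$ for the velocity law of the background particle produces the numerator of the claimed formula.

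The core of the work is the denominator, which encodes the conditioning that this specific background particle has not previously hit the tagged particle on $[0,\tau]$. The unconditional probability of its initial data avoiding collision with the tagged trajectory $s\mapsto x(s)$ on $[0,\tau]$ equals the $g_0$-weighted volume of allowed initial positions. A particle with velocity $v'$ collides with the tagged particle before time $\tau$ iff its initial position lies in the cylinder swept by a ball of radius $\varepsilon$ around the relative trajectory; the volume of this cylinder is $\pi\varepsilon^2\int_0^\tau|v(s)-v'|\,\mathrm d s$ up to lower-order corrections. Consequently the probability of no prior collision equals $1-\pi\varepsilon^2\int_0^\tau|v(s)-v'|\,\mathrm d s+\hat C_1\varepsilon^3$, where $\hat C_1\varepsilon^3$ collects (i) the no-initial-overlap ball of volume $\tfrac{4}{3}\pi\varepsilon^3$ around $x_0$, (ii) self-overlap of the swept cylinder (this is $O(\varepsilon^3)$ because $\Phi\in R(\varepsilon)$ is re-collision free and thus the relative trajectory piecewise has no close returns), and (iii) the endpoint caps at each collision time $t_j$ where $v(s)$ jumps. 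Dividing by this normalisation gives the stated denominator.

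The main obstacle is the control of the $\hat C_1\varepsilon^3$ correction: one has to justify that all the geometric defects listed above (initial overlap ball, cap contributions where the relative velocity changes direction, and possible near-self-intersections of the swept cylinder between successive collisions) contribute at most $O(\varepsilon^3)$ with a constant depending only on $\Phi$, using the re-collision-free and non-grazing properties of $\mathcal{G}(\varepsilon)$ together with the velocity bound $\mathcal V(\Phi)\le V(\varepsilon)$ and the scaling assumption \eqref{eq-vassump2}. Once this bookkeeping is done, assembling the factors $(N-n(\bar\Phi))\varepsilon^2=1-\gamma(\tau)$, $g_0(v')[(v(\tau^-)-v')\cdot\nu]_+$, and the denominator above yields exactly the claimed identity.
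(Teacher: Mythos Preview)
Your proposal is correct and follows essentially the same approach as the paper: factor $\hat P_\tau(\Phi)=\hat P_\tau(\bar\Phi)\,\hat P_\tau(\Phi\mid\bar\Phi)$, use exchangeability to extract the factor $(N-n(\bar\Phi))\varepsilon^2=1-\gamma(\tau)$, apply the Jacobian $\varepsilon^2[(v(\tau^-)-v')\cdot\nu]_+$ from Lemma~\ref{lem-phatabscts} to pass between initial data $(\bar x,v')$ and collision markers $(\tau,\nu,v')$, and identify the denominator as $1-|\Delta|$ with $|\Delta|$ the volume of the forbidden tube. The only point you leave implicit, and which the paper makes explicit, is an inclusion--exclusion step: to turn ``some background particle realises the collision'' into the clean factor $N-n(\bar\Phi)$ times a one-particle density one must check that the event of \emph{two} background particles lying in the $h$-neighbourhood $V_h$ has probability $o(h^6)$, so that the upper bound (union bound) and lower bound (Bonferroni) on $\hat P_\tau(U_h\mid\bar\Phi)$ coincide in the limit $h\to 0$.
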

\begin{proof}
Firstly,
\begin{align*}
\hat{P}_\tau(\Phi) & = \hat{P}_\tau(\Phi \cap \bar{\Phi})  = \hat{P}_\tau(\Phi \, |\,  \bar{\Phi}) \hat{P}_\tau( \bar{\Phi}) .
\end{align*}
It remains to show that
\begin{equation} \label{eq-phatgain}
\hat{P}_\tau(\Phi \, |\,  \bar{\Phi})=(1-\gamma(\tau)) \frac{g_0(v')[(v(\tau^-)-v')\cdot \nu ]_+}{1- \pi \varepsilon^2 \int_0^\tau |v(s)-v'| \, \mathrm{d}s + \hat{C}_1\varepsilon^3}.
\end{equation}
We do this by proving upper and lower bounds. For $h \geq 0$ define,
\[ U_h:= \{ \Psi \in \mathcal{MT} : \bar{\Psi}_{\tau(\Phi)} = \bar{\Phi}_{\tau(\Phi)} \textrm{ and } \Psi \in B_h(\Phi) \}. \]
Note that $U_0:= \{ \Phi \}$. Then by lemma~\ref{lem-phatabscts},
\begin{equation} \label{eq-phatlim}
\hat{P}_\tau (\Phi \, | \, \bar{\Phi}) = \lim_{h \to 0} h^{-6}\hat{P}_\tau (U_h \, | \, \bar{\Phi}).
\end{equation}
For $\Psi \in U_h$ define $V_h(\Psi) \in U \times \mathbb{R}^3$ to be the initial position of the background particle that leads to the final collision of $\Psi$ and define $V_h \subset U \times \mathbb{R}^3$ by,
\[ V_h = \cup_{\Psi \in U_h} V_h(\Psi). \]
Note that $V_0 = \{ (x(\tau)+\varepsilon \nu - \tau v',v') \}$ that is $V_0$ contains only the initial point of the background particle that gives the final collision in $\Phi$. Then by a change of coordinates, recalling \eqref{eq-detgradphi},
 \begin{align} \label{eq-phatuhlower}
\hat{P}_\tau(U_h \, | \, \bar{\Phi})  & \leq \sum_{i=1}^{N-(n(\Phi)-1)} \hat{P}_\tau ((x_i,v_i) \in V_h \, | \, \bar{\Phi} _\tau) \varepsilon^2 [(v(\tau^-) - v')\cdot \nu]_+ \nonumber \\
& = (N-(n(\Phi)-1))\varepsilon ^2\hat{P}_\tau ((x_1,v_1) \in V_h \, | \, \bar{\Phi} )   [(v(\tau^-) - v')\cdot \nu]_+ \nonumber \\
& =(1- \gamma(\tau) )\hat{P}_\tau ((x_1,v_1) \in V_h \, | \, \bar{\Phi} )   [(v(\tau^-) - v')\cdot \nu]_+.
\end{align}
By absolute continuity of $\hat{P}_\tau$,
\[ \lim_{h \to 0} h^{-6}\hat{P}_\tau ((x_1,v_1) \in V_h \, | \, \bar{\Phi}) = \hat{P}_\tau ((x_1,v_1) \in V_0 \, | \, \bar{\Phi}). \]
Combining these into \eqref{eq-phatlim},
\begin{align} \label{eq-phatgivenlower}
\hat{P}_\tau (\Phi \, | \, \bar{\Phi}) &= \lim_{h \to 0} h^{-6}\hat{P}_\tau (U_h \, | \, \bar{\Phi}) \nonumber \\
& \leq \lim_{h \to 0} h^{-6} (1- \gamma(\tau)) \hat{P}_\tau ((x_1,v_1) \in V_h \, | \, \bar{\Phi} )   [(v(\tau^-) - v')\cdot \nu]_+\nonumber \\
&
 = (1- \gamma(\tau)) \hat{P}_\tau ((x_1,v_1) \in V_0 \, | \, \bar{\Phi} _\tau)   [(v(\tau^-) - v')\cdot \nu]_+.
\end{align}
Next consider the lower bound. By the inclusion-exclusion principle,
\begin{align} \label{eq-phatuhupper}
\hat{P} _\tau (U_h \, | \, \bar{\Phi}) & \geq \sum_{i=1}^{N-(n(\Phi)-1)} \hat{P}_\tau ((x_i,v_i) \in V_h \, | \, \bar{\Phi}) \varepsilon^2 [(v(\tau^-) - v')\cdot \nu]_+  \nonumber \\
& \quad  - \sum_{1 \leq i < j \leq N-(n(\Phi)-1)} \hat{P}_\tau ((x_i,v_i), (x_j,v_j) \in V_h \, | \, \bar{\Phi}) \varepsilon^2 [(v(\tau^-) - v')\cdot \nu]_+ .
\end{align}
As in \eqref{eq-phatuhlower} it follows,
\begin{align*}
\sum_{i=1}^{N-(n(\Phi)-1)} \hat{P}_\tau & ((x_i,v_i) \in  V_h \, | \, \bar{\Phi}) \varepsilon^2 [(v(\tau^-) - v')\cdot \nu]_+ \\
& = (1-\gamma(\tau))\hat{P}_\tau ((x_1,v_1) \in V_h \, | \, \bar{\Phi}) [(v(\tau^-) - v')\cdot \nu]_+.
\end{align*}
Further,
\begin{align*}
\sum_{1 \leq i < j \leq N-(n(\Phi)-1)} \hat{P}_\tau & ((x_i,v_i), (x_j,v_j) \in V_h \, | \, \bar{\Phi}) \varepsilon^2 [(v(\tau^-) - v')\cdot \nu]_+ \nonumber
 \\
 & \leq N(N-1) \hat{P}_\tau ((x_1,v_1), (x_2,v_2) \in V_h \, | \, \bar{\Phi}) \varepsilon^2 [(v(\tau^-) - v')\cdot \nu]_+  \nonumber \\
 & = (N-1)\hat{P}_\tau ((x_1,v_1) \in V_h \, | \, \bar{\Phi})^2 [(v(\tau^-) - v')\cdot \nu]_+.
\end{align*}
By the absolute continuity of $\hat{P}_t$ this implies,
\begin{align*}
\lim_{h \to 0} \frac{1}{h^6} \sum_{1 \leq i < j \leq N-(n(\Phi)-1)} \hat{P}_\tau & ((x_i,v_i), (x_j,v_j) \in V_h \, | \, \bar{\Phi}) \varepsilon^2 [(v(\tau^-) - v')\cdot \nu]_+ =0.
\end{align*}
Hence by \eqref{eq-phatuhupper},
\begin{align}  \label{eq-phatgivenupper}
\hat{P}_\tau (\Phi \, | \, \bar{\Phi}) &= \lim_{h \to 0} h^{-6}\hat{P}_\tau (U_h \, | \, \bar{\Phi}) \nonumber \\
& \geq \lim_{h \to 0} h^{-6} (1-\gamma(\tau)) \hat{P}_\tau ((x_1,v_1) \in V_h \, | \, \bar{\Phi})  [(v(\tau^-) - v')\cdot \nu]_+ \nonumber \\
& = (1-\gamma(\tau)) \hat{P}_\tau ((x_1,v_1) \in V_0 \, | \, \bar{\Phi})  [(v(\tau^-) - v')\cdot \nu]_+.
\end{align}
Recalling that we need to prove \eqref{eq-phatgain} to prove the lemma, we see that with \eqref{eq-phatgivenlower} and \eqref{eq-phatgivenupper} we now need only to show that,
\begin{equation*}
\hat{P}_\tau ((x_1,v_1) \in V_0 \, | \, \bar{\Phi}) =  \frac{g_0(v')}{1- \pi \varepsilon^2 \int_0^\tau |v(s)-v'| \, \mathrm{d}s + \hat{C}_1\varepsilon^3}.
\end{equation*}
That is,
\begin{equation} \label{eq-phatx1v1}
\hat{P}_\tau \left( (x_1,v_1)=(x(\tau)+\varepsilon\nu - \tau v',v')\, | \, \bar{\Phi} \right) =  \frac{g_0(v')}{1- \pi \varepsilon^2 \int_0^\tau |v(s)-v'| \, \mathrm{d}s + \hat{C}_1\varepsilon^3}.
\end{equation}

Since we are conditioning on $\bar{\Phi}$ occurring there is a region of $U$ that we must rule out for the initial position of the background particle if we know that is has velocity $v'$. That is to say there is a region of $U$ where we know the background particle cannot have started with initial velocity $v'$ because if it did it would have interfered with $\bar{\Phi}$. Denote this region of $U$ by $\Delta$. To calculate the volume of $\Delta$ we imagine it as cylinders to get, for fixed incoming velocity $v'$,
\begin{equation} \label{eq-voldelta}
|\Delta| = \pi \varepsilon^2 \int_0^\tau |v(s)-v'| \, \mathrm{d}s - \hat{C}_1\varepsilon^3,
\end{equation}
where the first term calculates the volume of each cylinder as the root particle changes direction and the second term subtracts the over-estimate error caused counting certain parts twice as the particle changes direction.
\begin{figure}[ht]
\centering
\includegraphics[scale=1]{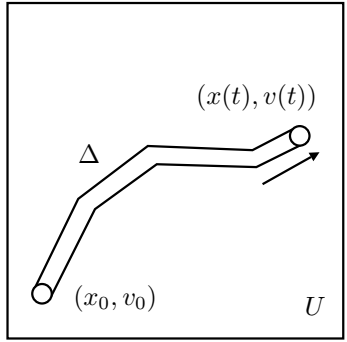}
\caption{In the case $v'=0$ we are calculating the volume of $\Delta$, since we know the background particle cannot start in $\Delta$. For $v'\neq 0$ the cylinders get shifted but the principle is the same. (Diagram not to scale)}
\label{fig-delta}
\end{figure}

Therefore, recalling that $|U|=1$,
\[ |U \setminus \Delta| = 1- \left( \pi \varepsilon^2 \int_0^\tau |v(s)-v'| \, \mathrm{d}s -\hat{C}_1\varepsilon^3\right), \]
and this together with the fact that the velocity of the background particle has initial distribution $g_0$ gives the required \eqref{eq-phatx1v1}.
\end{proof}

\begin{proof}[Proof of theorem~\ref{thm-emp}.]
Combining lemmas~\ref{lem-emp1}, \ref{lem-emp2} and \ref{lem-emp3} gives the required proof of the theorem.
\end{proof}

\section{Convergence}\label{sec:conv}
Having proven the existence of the idealized distribution $P_t$ and shown that the empirical distribution $\hat{P}_t$ solves the appropriate equation, we seek to show the convergence results that will help prove our main result. Following \cite{matt12}, the idea is to establish a differential inequality in \eqref{eq-phatrcompeq}. In combination with  the fact that $P_t$ is a probability measure and  that $\lim_{\varepsilon \to 0}P_t(\mathcal{G}(\varepsilon))=1$ in proposition \ref{prop-goodtreefull} the inequality delivers the convergence result theorem~\ref{thm-ptphatcomp}. The main theorem~\ref{thm-main} is a direct consequence. We first introduce some notation. Recall \eqref{eq-deff1phi} and \eqref{eq-zetadeff}. For $\varepsilon>0$, $\Phi \in \mathcal{G}(\varepsilon)$, $t \in [0,T]$, define the following,
\begin{align}
\eta_t^\varepsilon(\Phi) & := \int_{U \times \mathbb{R}^3} g_0(\bar{v})(1- \mathbbm{1}_t^\varepsilon[\Phi](\bar{x},\bar{v})) \, \mathrm{d}\bar{x} \, \mathrm{d}\bar{v}, \nonumber \\
 R_t^\varepsilon(\Phi) &: = \zeta(\varepsilon)P_t(\Phi), \nonumber \\
L(\Phi) &: = -\int_{\mathbb{S}^{2}} \int_{\mathbb{R}^3} g_0(\bar{v})[(v(\tau)-\bar{v})\cdot \nu ]_+ \, \mathrm{d}\bar{v} \, \mathrm{d}\nu, \nonumber \\
C(\Phi)& := 2 \sup_{t \in [0,T]} \left\{ \int_{\mathbb{S}^{2}} \int_{\mathbb{R}^3} g_0(\bar{v})[(v(t)-\bar{v})\cdot \nu ]_+ \right\} \label{eq-cdeff}  \\
 \rho^{\varepsilon,0}_t (\Phi) &: = \eta_t^\varepsilon(\Phi)C(\Phi)t . \nonumber
\end{align}
Further for $k \geq 1$ define,
\[  \rho^{\varepsilon,k}_t (\Phi) := (1-\varepsilon)\rho^{\varepsilon,k-1}_t (\Phi) + \rho^{\varepsilon,0}_t (\Phi) + \varepsilon.  \]
Note that this implies that for $k \geq 1$,
\begin{equation} \label{eq-rhoalt}
\rho^{\varepsilon,k}_t(\Phi) =(1-\varepsilon)^k \rho^{\varepsilon,0}_t (\Phi) + (\rho^{\varepsilon,0}_t (\Phi) + \varepsilon) \sum_{j=1}^k(1-\varepsilon)^{k-j}.
\end{equation}
Finally define,
\[ \hat{\rho}^\varepsilon_t(\Phi) := \rho^{\varepsilon,n(\Phi)}_t(\Phi). \]

\begin{prop} \label{prop-ptphatcomp}
For $\varepsilon$ sufficiently small, $\Phi \in \mathcal{G}(\varepsilon)$ and $t\in [0,T]$,
\begin{equation}\label{eq-phatrcompeq}
\hat{P}^\varepsilon_t(\Phi)-R^\varepsilon_t(\Phi) \geq - \hat{\rho}^\varepsilon_t(\Phi)R_t^\varepsilon(\Phi).
\end{equation}
\end{prop}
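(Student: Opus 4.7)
The plan is induction on $n(\Phi)$, combined with a Gronwall-type estimate in $t$ inside each induction step. Writing $\phi^\varepsilon_t(\Phi):=\hat P^\varepsilon_t(\Phi)/R^\varepsilon_t(\Phi)$, the target \eqref{eq-phatrcompeq} is equivalent to $\phi^\varepsilon_t(\Phi)\ge 1-\hat\rho^\varepsilon_t(\Phi)$.

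\emph{Base case $n(\Phi)=0$.} Lemma~\ref{lem-emp1} gives $\phi^\varepsilon_0=1$. Combining Lemma~\ref{lem-emp2} with $\partial_tP_t(\Phi)=-\mathcal Q^-_t[P_t](\Phi)$ from Theorem~\ref{thm-ideq}, and using $\int g_0\mathbbm{1}^\varepsilon_t\,\mathrm{d}\bar x\,\mathrm{d}\bar v=1-\eta^\varepsilon_t(\Phi)$, a direct computation produces
\[
\frac{\partial_t\phi^\varepsilon_t}{\phi^\varepsilon_t}=\frac{\hat C_2(\varepsilon)-\eta^\varepsilon_t(\Phi)K_t(\Phi)}{1-\eta^\varepsilon_t(\Phi)},\qquad K_t(\Phi):=\int_{\mathbb{S}^2}\int_{\mathbb{R}^3}g_0(\bar v)[(v(t)-\bar v)\cdot\nu]_+\,\mathrm{d}\bar v\,\mathrm{d}\nu.
\]
Since $\hat C_2(\varepsilon)\ge 0$, $\eta^\varepsilon_t<1/2$ by \eqref{eq-whtdenomest}, and $2K_t(\Phi)\le C(\Phi)$ by \eqref{eq-cdeff}, this yields $\partial_t\phi^\varepsilon_t\ge -C(\Phi)\eta^\varepsilon_t(\Phi)\phi^\varepsilon_t$. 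Because $\eta^\varepsilon_t$ is non-decreasing in $t$, Gronwall together with $e^{-x}\ge1-x$ delivers $\phi^\varepsilon_t\ge 1-\rho^{\varepsilon,0}_t(\Phi)$.

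\emph{Inductive step.} Assume the bound for all trees with $n-1$ collisions and take $\Phi$ with $n(\Phi)=n\ge 1$. The gain formulas in Lemma~\ref{lem-emp3} and \eqref{eq-ptfull} share the factor $g_0(v')[(v(\tau^-)-v')\cdot\nu]_+$, so at $t=\tau$
\[
\phi^\varepsilon_\tau(\Phi)=\phi^\varepsilon_\tau(\bar\Phi)\cdot\frac{1-\gamma(\tau)}{1-\pi\varepsilon^2\int_0^\tau|v(s)-v'|\,\mathrm{d}s+\hat C_1\varepsilon^3}.
\]
Assumptions \eqref{eq-vassump2}--\eqref{eq-massump} together with $\mathcal V(\Phi)\le V(\varepsilon)$ ensure that both $\gamma(\tau)=(n-1)\varepsilon^2\le M(\varepsilon)\varepsilon^2\le\sqrt\varepsilon$ and $\pi\varepsilon^2\int_0^\tau|v(s)-v'|\,\mathrm{d}s\le 2\pi\varepsilon^2TV(\varepsilon)$ are $o(1)$, so the collision factor is bounded below by $1-\varepsilon$ for $\varepsilon$ small. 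Combining the induction hypothesis $\phi^\varepsilon_\tau(\bar\Phi)\ge 1-\rho^{\varepsilon,n-1}_\tau(\bar\Phi)$ with the elementary $(1-a)(1-\varepsilon)\ge 1-((1-\varepsilon)a+\varepsilon)$ gives $\phi^\varepsilon_\tau(\Phi)\ge 1-((1-\varepsilon)\rho^{\varepsilon,n-1}_\tau(\Phi)+\varepsilon)$, after noting that $\eta^\varepsilon_\tau(\Phi)=\eta^\varepsilon_\tau(\bar\Phi)$ (shared root trajectory on $[0,\tau]$) and absorbing the mild discrepancy between $C(\Phi)$ and $C(\bar\Phi)$ into the constants. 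Propagation to $t>\tau$ then reruns the base-case Gronwall argument on the interval $(\tau,t]$, contributing an additional $\rho^{\varepsilon,0}_t(\Phi)$ to the error, which assembles via the closed form \eqref{eq-rhoalt} into $\rho^{\varepsilon,n}_t(\Phi)=\hat\rho^\varepsilon_t(\Phi)$.

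\emph{Main obstacle.} The principal difficulty is uniform control of the many competing small corrections compounded over up to $M(\varepsilon)\to\infty$ collisions: the factor $(1-\gamma(t))$ in Theorem~\ref{thm-emp}, the gain-denominator overlap $\pi\varepsilon^2\int|v(s)-v'|\,\mathrm{d}s-\hat C_1\varepsilon^3$, the recollision correction $\hat C_2(\varepsilon)$ in the loss, and the prefactor $(1-\eta^\varepsilon_t)^{-1}$. The assumptions \eqref{eq-vassump2} and \eqref{eq-massump} are calibrated precisely so that each collision absorbs one multiplicative $(1-\varepsilon)$ and one additive $\varepsilon$, exactly matching the recursion for $\rho^{\varepsilon,k}$; verifying that this matching survives uniformly across all $\Phi\in\mathcal G(\varepsilon)$ is the crux of the proof.
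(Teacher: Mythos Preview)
Your proof is correct and follows the same architecture as the paper: induction on $n(\Phi)$, with a differential inequality propagated across $(\tau,t]$ in each step, the collision factor at $t=\tau$ contributing the multiplicative $(1-\varepsilon)$ and additive $\varepsilon$ that feed the recursion for $\rho^{\varepsilon,k}$.

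The only packaging difference is that you work with the ratio $\phi^\varepsilon_t=\hat P^\varepsilon_t/R^\varepsilon_t$ and Gronwall, while the paper works with the difference $\hat P^\varepsilon_t-R^\varepsilon_t$ and variation of constants (their Lemma~\ref{lem-pphatcomphelp}). Your route is slightly cleaner computationally, but you should note that the ratio is only defined where $R^\varepsilon_t(\Phi)>0$; when $R^\varepsilon_t(\Phi)=0$ (e.g.\ $t<\tau$, or $g_0(v')=0$, or grazing) the target inequality is $\hat P^\varepsilon_t\ge 0$ and holds trivially. Two small points also deserve one line each: the phrase ``absorbing the mild discrepancy between $C(\Phi)$ and $C(\bar\Phi)$'' is in fact the clean inequality $C(\bar\Phi)\le C(\Phi)$, since every velocity on the trajectory of $\bar\Phi$ already occurs on that of $\Phi$; and in the propagation step for $n\ge 1$ the extra factor $(1-\gamma(t))$ with $\gamma(t)=n\varepsilon^2>0$ only \emph{helps} in your inequality for $\partial_t\phi/\phi$, so the base-case Gronwall bound carries over unchanged.
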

To prove this proposition we use a number of lemmas.
\begin{lem} \label{lem-pphatcomphelp}
For $\Phi \in \mathcal{G}(\varepsilon)$ and $t \geq \tau$,
\begin{align*}
\hat{P}_t^\varepsilon(\Phi) - R_t^\varepsilon(\Phi) & \geq \exp\left(L(\Phi)  \int_\tau^t (1+2\eta_s^\varepsilon(\Phi)) \, \mathrm{d}s \right)(\hat{P}_\tau^\varepsilon(\Phi) - R_\tau^\varepsilon(\Phi)) \\
&  \qquad + 2\eta_t^\varepsilon(\Phi) L(\Phi) R_t^\varepsilon(\Phi) \int_\tau^t \exp \left(2\eta_s^\varepsilon(\Phi) (t-s) L(\Phi) \right)   \, \mathrm{d}s.
\end{align*}
\end{lem}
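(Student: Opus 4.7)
The strategy is to solve a linear first‑order ODE for $D_s := \hat P_s^\varepsilon(\Phi) - R_s^\varepsilon(\Phi)$ on the interval $[\tau,t]$ and then compare with an explicit majorant. On $(\tau,t]$ no collision occurs in $\Phi$, so $v(s)=v(\tau)$, and differentiating \eqref{eq-ptfull} gives $\partial_s R_s^\varepsilon(\Phi) = L(\Phi) R_s^\varepsilon(\Phi)$, hence $R_s^\varepsilon = R_\tau^\varepsilon e^{L(\Phi)(s-\tau)}$. From Lemma~\ref{lem-emp2}, noting that $\int g_0\mathbbm{1}_s^\varepsilon[\Phi] = 1-\eta_s^\varepsilon(\Phi)$ and $v(s)=v(\tau)$, one obtains $\partial_s \hat P_s^\varepsilon(\Phi) = a_s \hat P_s^\varepsilon(\Phi)$ with
\[ a_s := (1-\gamma(s))\,\frac{L(\Phi) + \hat C_2(\varepsilon)}{1-\eta_s^\varepsilon(\Phi)}. \]
Subtracting, $D_s$ satisfies the inhomogeneous linear ODE $\partial_s D_s = a_s D_s + (a_s - L(\Phi))\,R_s^\varepsilon(\Phi)$ with initial datum $D_\tau = \hat P_\tau^\varepsilon(\Phi)-R_\tau^\varepsilon(\Phi)$.

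Next I would derive the two coefficient estimates, valid for all sufficiently small $\varepsilon$:
\[ a_s \geq L(\Phi)\bigl(1+2\eta_s^\varepsilon(\Phi)\bigr), \qquad a_s - L(\Phi) \geq 2\,L(\Phi)\,\eta_s^\varepsilon(\Phi). \]
Both follow by elementary algebra from $\gamma(s)\leq n(\Phi)\varepsilon^2 \leq \varepsilon^{3/2}$ (\eqref{eq-massump}), $\hat C_2(\varepsilon)=o(1)$ (Lemma~\ref{lem-intg0bhtbound}), the bound $(1-\eta)^{-1}\leq 1+2\eta$ for $\eta\leq 1/2$, and the sign $L(\Phi)<0$ which reverses inequalities under multiplication. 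Using these, I would introduce the auxiliary linear ODE
\[ \partial_s \tilde D_s = L(\Phi)\bigl(1+2\eta_s^\varepsilon(\Phi)\bigr)\,\tilde D_s + 2\eta_s^\varepsilon(\Phi)\,L(\Phi)\,R_s^\varepsilon(\Phi), \qquad \tilde D_\tau = D_\tau, \]
and apply a Gronwall‑type comparison argument, combining the two inequalities above with the positivity $R_s^\varepsilon\geq 0$, to conclude $D_t\geq \tilde D_t$.

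Variation of parameters then yields, after substituting $R_s^\varepsilon = R_t^\varepsilon e^{-L(\Phi)(t-s)}$ and consolidating the exponential,
\[ \tilde D_t = e^{\tilde A(t)}\,D_\tau + 2\,L(\Phi)\,R_t^\varepsilon(\Phi)\int_\tau^t \eta_s^\varepsilon(\Phi)\,\exp\!\Bigl(2\,L(\Phi)\!\int_s^t \eta_u^\varepsilon(\Phi)\,du\Bigr)\,ds, \]
with $\tilde A(t) := \int_\tau^t L(\Phi)(1+2\eta_u^\varepsilon(\Phi))\,du$. Since the tube swept out by the root particle only grows, $s\mapsto \eta_s^\varepsilon(\Phi)$ is non‑decreasing, so $\int_s^t \eta_u^\varepsilon\,du \geq \eta_s^\varepsilon(t-s)$ and $\eta_s^\varepsilon \leq \eta_t^\varepsilon$; tracking signs (with $L(\Phi)<0$), the inner exponential is lower bounded by $\exp(2L(\Phi)\eta_s^\varepsilon(t-s))$ inside the integral and $\eta_t^\varepsilon$ may be pulled out as an overall factor, producing the stated right‑hand side.

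The main obstacle is the sign‑sensitive comparison step $D_t\geq \tilde D_t$: since $L(\Phi)<0$ inverts the usual monotonicity of Gronwall arguments, and since $D_\tau$ can have either sign, the standard comparison requires careful book‑keeping of which inequalities are preserved and which are reversed at each multiplication. The non‑negativity of the two ``defect'' quantities $a_s - L(\Phi)(1+2\eta_s^\varepsilon)$ and $(a_s-L(\Phi)) - 2L(\Phi)\eta_s^\varepsilon$ established in paragraph two, together with $R_s^\varepsilon\geq 0$, is precisely what keeps the Gronwall comparison tractable.
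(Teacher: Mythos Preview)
Your approach is essentially the same as the paper's: derive a differential inequality for $D_s=\hat P_s^\varepsilon-R_s^\varepsilon$ on $(\tau,t]$, apply variation of constants, then simplify using monotonicity of $\eta_s^\varepsilon$ and the explicit relation $R_s^\varepsilon=e^{-(t-s)L}R_t^\varepsilon$. The coefficient estimate $a_s\geq (1+2\eta_s^\varepsilon)L(\Phi)$ via $(1-\eta)^{-1}\leq 1+2\eta$ is exactly what the paper uses.

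There is, however, an unnecessary detour and a small imprecision in how you handle the comparison. You write the exact ODE $\partial_s D_s = a_s D_s+(a_s-L)R_s^\varepsilon$, introduce an auxiliary $\tilde D_s$, and then worry about the ``sign-sensitive'' step $D_t\geq\tilde D_t$ because $D_\tau$ may have either sign. The paper avoids this entirely: since $\hat P_s^\varepsilon\geq 0$, one immediately replaces $a_s\hat P_s^\varepsilon$ by $(1+2\eta_s^\varepsilon)L(\Phi)\hat P_s^\varepsilon$ in the derivative, yielding the \emph{differential inequality}
\[
\partial_s D_s \;\geq\; (1+2\eta_s^\varepsilon)L(\Phi)\,D_s + 2\eta_s^\varepsilon L(\Phi)\,R_s^\varepsilon
\]
in one line. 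Variation of constants for a scalar differential inequality (multiply by the integrating factor $e^{-\int_\tau^s(1+2\eta_u)L\,du}$ and integrate) then gives the claim directly, with no dependence on the sign of $D_\tau$. Your two ``defect'' quantities are in fact the same quantity, and the ingredient that makes the comparison work is $\hat P_s^\varepsilon\geq 0$, not merely $R_s^\varepsilon\geq 0$ as you state; in your auxiliary-ODE framing the analogous fact is $\tilde D_s+R_s^\varepsilon\geq 0$, which follows from $\hat P_\tau^\varepsilon\geq 0$.

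One further sign slip: the inner exponential $\exp\bigl(2L\int_s^t\eta_u\,du\bigr)$ is \emph{upper} bounded by $\exp\bigl(2L\eta_s(t-s)\bigr)$ (since $\int_s^t\eta_u\,du\geq\eta_s(t-s)$ and $L<0$), not lower bounded. The inequality for the full second term still goes the right way because of the overall negative factor $2L R_t^\varepsilon$, but your description has the direction reversed.
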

\begin{proof}
For $t=\tau$ the result holds trivially. For $t >\tau $, by theorem~\ref{thm-ideq} and theorem~\ref{thm-emp},
\begin{equation} \label{eq-rtptcomp}
\partial_t \left( \hat{P}_t^\varepsilon(\Phi) -R_t^\varepsilon(\Phi)  \right) = (1-\gamma(t))\hat{L}_t(\Phi)\hat{P}_t(\Phi) -L(\Phi)R_t^\varepsilon(\Phi),
\end{equation}
where
\[ \hat{L}_t(\Phi) := - \left( \frac{\int_{\mathbb{S}^{2}}  \int_{\mathbb{R}^3}   g_0(\bar{v})[(v(\tau)-\bar{v})\cdot \nu]_+ \, \mathrm{d}\bar{v} \, \mathrm{d}\nu - \hat{C}_2(\varepsilon)}{\int_{ U \times \mathbb{R}^3}  g_0(\bar{v}) \mathbbm{1}_t^\varepsilon[\Phi] (\bar{x},\bar{v}) \, \mathrm{d}\bar{x} \, \mathrm{d}\bar{v}  } \right). \]
Recalling \eqref{eq-voldelta} and \eqref{eq-beta}, for $\varepsilon$ sufficiently small,
\begin{align} \label{eq-etabound}
\int_{U \times \mathbb{R}^3} g_0(\bar{v})(1 & - \mathbbm{1}_t^\varepsilon[\Phi](\bar{x},\bar{v})) \, \mathrm{d}\bar{x} \, \mathrm{d}\bar{v}
\leq \int_{\mathbb{R}^3} g_0(\bar{v})\left(\pi \varepsilon^2 \int_0^t |v(s)-\bar{v}| \, \mathrm{d}s - \hat{C}_1\varepsilon^3 \right) \,  \mathrm{d}\bar{v}
\nonumber \\ &\leq \int_{\mathbb{R}^3} g_0(\bar{v})\left(\pi \varepsilon^2 T(\mathcal{V}(\Phi)+ | \bar{v}|)  - \hat{C}_1\varepsilon^3 \right) \,  \mathrm{d}\bar{v} \leq \int_{\mathbb{R}^3} g_0(\bar{v})\left(\pi \varepsilon^2 T(V(\varepsilon)+ | \bar{v}|)  \right) \,  \mathrm{d}\bar{v} \nonumber \\
& \leq  \pi \varepsilon^2 T \int_{\mathbb{R}^3} g_0(\bar{v})\left(V(\varepsilon)+ | \bar{v}|  \right) \,  \mathrm{d}\bar{v} \leq \pi \varepsilon^2 T (V(\varepsilon)+\beta)  < 1/2.
\end{align}
Noting that for $0\leq z \leq 1/2$, $\sum_{i=0}^\infty z^i \leq 2$ which gives,
\begin{align*}
\frac{1}{1-z} & = \sum_{i=0}^\infty z^i = 1 + z\left( \sum_{i=0}^\infty z^i \right) \leq 1+ 2z.
\end{align*}
It follows,
\begin{align}
\frac{1}{\int_{ U \times \mathbb{R}^3}  g_0(\bar{v}) \mathbbm{1}_t^\varepsilon[\Phi] (\bar{x},\bar{v}) \, \mathrm{d}\bar{x} \, \mathrm{d}\bar{v}} & = \frac{1}{1- \int_{ U \times \mathbb{R}^3}  g_0(\bar{v}) (1-\mathbbm{1}_t^\varepsilon[\Phi] (\bar{x},\bar{v})) \, \mathrm{d}\bar{x} \, \mathrm{d}\bar{v}} \nonumber \\
& \leq 1+2\left( \int_{ U \times \mathbb{R}^3}  g_0(\bar{v}) (1-\mathbbm{1}_t^\varepsilon[\Phi] (\bar{x},\bar{v})) \, \mathrm{d}\bar{x} \, \mathrm{d}\bar{v} \right) = 1+2\eta_t^\varepsilon(\Phi).
\end{align}
This gives that,
\begin{align*}
 (1-\gamma(t)) & \left( \frac{\int_{\mathbb{S}^{2}}  \int_{\mathbb{R}^3}   g_0(\bar{v})[(v(\tau)-\bar{v})\cdot \nu]_+ \, \mathrm{d}\bar{v} \, \mathrm{d}\nu - \hat{C}_2(\varepsilon)}{\int_{ U \times \mathbb{R}^3}  g_0(\bar{v}) \mathbbm{1}_t^\varepsilon[\Phi] (\bar{x},\bar{v}) \, \mathrm{d}\bar{x} \, \mathrm{d}\bar{v}} \right) \\
& \leq \frac{\int_{\mathbb{S}^{2}}  \int_{\mathbb{R}^3}   g_0(\bar{v})[(v(\tau)-\bar{v})\cdot \nu]_+ \, \mathrm{d}\bar{v} \, \mathrm{d}\nu }{\int_{ U \times \mathbb{R}^3}  g_0(\bar{v}) \mathbbm{1}_t^\varepsilon[\Phi] (\bar{x},\bar{v}) \, \mathrm{d}\bar{x} \, \mathrm{d}\bar{v}}  \\
 & \leq (1+ 2\eta_t^\varepsilon(\Phi)) \int_{\mathbb{S}^{2}}  \int_{\mathbb{R}^3}   g_0(\bar{v})[(v(\tau)-\bar{v})\cdot \nu]_+ \, \mathrm{d}\bar{v} \, \mathrm{d}\nu.
\end{align*}
Finally giving that,
\[ (1-\gamma(t))\hat{L}_t(\Phi) \geq (1+2\eta_t^\varepsilon(\Phi))L(\Phi). \]
Substituting this into \eqref{eq-rtptcomp},
\begin{align} \label{eq-phatrode}
\partial_t \left( \hat{P}_t^\varepsilon(\Phi) -R_t^\varepsilon(\Phi)  \right) & = (1-\gamma(t))\hat{L}_t(\Phi)\hat{P}_t(\Phi) -L(\Phi)R_t^\varepsilon(\Phi)\nonumber \\
& \geq (1+2\eta_t^\varepsilon(\Phi))L(\Phi)\hat{P}_t(\Phi) -L(\Phi)R_t^\varepsilon(\Phi)\nonumber \\
& = (1+2\eta_t^\varepsilon(\Phi))L(\Phi)\left(\hat{P}_t(\Phi) -  R_t^\varepsilon(\Phi)  \right)  +2\eta_t^\varepsilon(\Phi)L(\Phi)  R_t^\varepsilon(\Phi).
\end{align}
For fixed $\Phi$ this is simply a $1d$ ODE in $t$. If $y:[\tau,\infty) \to \mathbb{R}$ satisfies,
\begin{equation*}
\begin{cases}
\frac{d}{dt}y(t) \geq a(t)y(t)+b(t), \\
y(\tau) = y_0.
\end{cases}
\end{equation*}
Then it follows by the variation of constants,
\begin{align*}
y(t) \geq \exp\left( \int_\tau^t a(s) \, \mathrm{d}s \right)y_0 + \int_\tau^t \exp \left( \int_s^t a(\sigma) \, \mathrm{d}\sigma  \right) b(s) \, \mathrm{d}s.
\end{align*}
Applying this to \eqref{eq-phatrode},
\begin{align} \label{eq-phatrcomp}
 \hat{P}_t^\varepsilon(\Phi) -R_t^\varepsilon(\Phi) & \geq \exp\left( \int_\tau^t (1+2\eta^\varepsilon_s(\Phi))L(\Phi) \, \mathrm{d}s \right)(\hat{P}_\tau^\varepsilon(\Phi) -R_\tau^\varepsilon(\Phi))\nonumber \\
 & \qquad  + \int_\tau^t \exp \left( \int_s^t (1+2\eta^\varepsilon_\sigma(\Phi))L(\Phi) \, \mathrm{d}\sigma  \right) 2\eta^\varepsilon_s(\Phi)L(\Phi)  R_s^\varepsilon(\Phi) \, \mathrm{d}s.
\end{align}
Recall the definition of $\mathbbm{1}_t^\varepsilon[\Phi]$ \eqref{eq-deff1phi} and note that it is non-increasing. Hence $\eta_t^\varepsilon(\Phi)$ is non-decreasing. So for $\tau\leq \sigma \leq t$, $\eta^\varepsilon_\sigma(\Phi) \leq \eta_t^\varepsilon(\Phi)$. Recalling that $L(\Phi)$ is non-positive, \eqref{eq-phatrcomp} becomes,
\begin{align}
\hat{P}_t^\varepsilon(\Phi) -R_t^\varepsilon(\Phi) & \geq \exp\left( \int_\tau^t (1+2\eta_s^\varepsilon(\Phi))L(\Phi) \, \mathrm{d}s \right)(\hat{P}_\tau^\varepsilon(\Phi) -R_\tau^\varepsilon(\Phi))\nonumber \\
 & \qquad  +2\eta_t^\varepsilon(\Phi) \int_\tau^t \exp \left( \int_s^t (1+2\eta^\varepsilon_\sigma(\Phi))L(\Phi) \, \mathrm{d}\sigma  \right) L(\Phi)  R_s^\varepsilon(\Phi) \, \mathrm{d}s \nonumber \\
& \geq \exp\left( \int_\tau^t (1+2\eta_s^\varepsilon(\Phi))L(\Phi) \, \mathrm{d}s \right)(\hat{P}_\tau^\varepsilon(\Phi) -R_\tau^\varepsilon(\Phi))\nonumber \\
 & \qquad  +2\eta_t^\varepsilon(\Phi)L(\Phi) \int_\tau^t \exp \Big( (1+2\eta_s^\varepsilon(\Phi)) (t-s)  \,  L(\Phi) \Big)  R_s^\varepsilon(\Phi) \, \mathrm{d}s
 \label{eq-phatrcomp3} .
\end{align}
Finally for $t>\tau$,
\begin{align*}
\partial_t R_t^\varepsilon(\Phi) & =\zeta(\varepsilon) \partial_t P_t(\Phi) = \zeta(\varepsilon)P_t(\Phi)L(\Phi) = R_t^\varepsilon(\Phi)L(\Phi).
\end{align*}
Hence for $\tau \leq s \leq t$,
\[ R_t^\varepsilon(\Phi) = \exp \left( (t-s) L (\Phi)  \right) R_s^\varepsilon(\Phi). \]
Which implies,
\begin{equation} \label{eq-rexpdecay}
R_s^\varepsilon(\Phi) = \exp \left( - (t-s)L(\Phi)  \right) R_t^\varepsilon(\Phi).
\end{equation}
Substituting this into \eqref{eq-phatrcomp3},
\begin{align*}
\hat{P}_t^\varepsilon(\Phi) -R_t^\varepsilon(\Phi)  & \geq \exp\left( \int_\tau^t (1+2\eta_s^\varepsilon(\Phi))L(\Phi) \, \mathrm{d}s \right)(\hat{P}_\tau^\varepsilon(\Phi) -R_\tau^\varepsilon(\Phi))\nonumber \\
 & \qquad + 2\eta_t^\varepsilon(\Phi)L(\Phi) \int_\tau^t \exp \Big( (1+2\eta_s^\varepsilon(\Phi)) (t-s) L(\Phi)  \Big)    \\
 & \qquad  \times  \exp \left( - (t-s)L(\Phi) \right) R_t^\varepsilon(\Phi) \, \mathrm{d}s \\
 & \geq \exp\left( \int_\tau^t (1+2\eta_s^\varepsilon(\Phi))L(\Phi) \, \mathrm{d}s \right)(\hat{P}_\tau^\varepsilon(\Phi) -R_\tau^\varepsilon(\Phi))\nonumber \\
 & \qquad  +2\eta_t^\varepsilon(\Phi) L(\Phi) R_t^\varepsilon(\Phi) \int_\tau^t \exp \left(2\eta_s^\varepsilon(\Phi) (t-s) L(\Phi) \right)   \, \mathrm{d}s.
\end{align*}
This completes the proof of the lemma.
\end{proof}
\begin{lem} \label{lem-rho0bound}
For $\Phi \in \mathcal{G}(\varepsilon)$ and $t \geq \tau$,
\[   2\eta_t^\varepsilon(\Phi) L(\Phi)  \int_\tau^t \exp \left(2\eta_s^\varepsilon(\Phi) (t-s) L(\Phi) \right)   \, \mathrm{d}s \geq -\rho^{\varepsilon,0}_t(\Phi). \]

\end{lem}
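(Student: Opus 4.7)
The plan is to estimate the left-hand side by dropping the exponential factor (which lies in $(0,1]$) and then using the definition of $C(\Phi)$ to control $|L(\Phi)|$. The structure is entirely driven by sign tracking, with no delicate cancellation required.

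First I would observe the signs: by definition $L(\Phi) \leq 0$, and $\eta_s^\varepsilon(\Phi) \geq 0$ for all $s$, with $(t-s) \geq 0$ on the domain of integration. Hence the exponent $2\eta_s^\varepsilon(\Phi)(t-s)L(\Phi)$ is non-positive, so
\[ 0 < \exp\!\bigl(2\eta_s^\varepsilon(\Phi)(t-s)L(\Phi)\bigr) \leq 1 \qquad \text{for } s \in [\tau,t]. \]
Because the prefactor $2\eta_t^\varepsilon(\Phi) L(\Phi)$ is itself non-positive, multiplying the integrand by a quantity $\leq 1$ gives an inequality in the direction we want:
\[ 2\eta_t^\varepsilon(\Phi) L(\Phi) \int_\tau^t \exp\!\bigl(2\eta_s^\varepsilon(\Phi)(t-s)L(\Phi)\bigr) \, \mathrm{d}s \;\geq\; 2\eta_t^\varepsilon(\Phi) L(\Phi) (t-\tau). \]

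Next I would use the definition of $C(\Phi)$ in \eqref{eq-cdeff}. Evaluating the supremum at the specific time $\tau \in [0,T]$ gives
\[ -L(\Phi) = \int_{\mathbb{S}^{2}} \int_{\mathbb{R}^3} g_0(\bar{v})[(v(\tau)-\bar{v})\cdot \nu]_+ \, \mathrm{d}\bar{v} \, \mathrm{d}\nu \;\leq\; \tfrac{1}{2} C(\Phi), \]
so $L(\Phi) \geq -\tfrac{1}{2}C(\Phi)$. Combining with the previous display (and using $\eta_t^\varepsilon(\Phi) \geq 0$, $t-\tau \leq t$),
\[ 2\eta_t^\varepsilon(\Phi) L(\Phi)(t-\tau) \;\geq\; -\eta_t^\varepsilon(\Phi) C(\Phi)(t-\tau) \;\geq\; -\eta_t^\varepsilon(\Phi) C(\Phi) t \;=\; -\rho^{\varepsilon,0}_t(\Phi), \]
which is the desired inequality.

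There is no serious obstacle here; the only thing to be careful about is the direction of the inequalities when multiplying by non-positive quantities. The estimate is deliberately crude (we throw away the exponential decay entirely), but this is enough because $\rho^{\varepsilon,0}_t(\Phi)$ already carries the factor $\eta_t^\varepsilon(\Phi)$, which is the quantity that will be shown to be $o(1)$ as $\varepsilon \to 0$ in the subsequent analysis.
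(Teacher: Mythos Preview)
Your proof is correct and follows essentially the same approach as the paper: bound the exponential by $1$ so that the integral is at most $t-\tau\le t$, use $-L(\Phi)\le \tfrac12 C(\Phi)$ from the definition of $C(\Phi)$, and combine with the non-negativity of $\eta_t^\varepsilon(\Phi)$. The only cosmetic difference is that you carry the factor $\eta_t^\varepsilon(\Phi)$ along from the start, whereas the paper multiplies it in at the end.
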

\begin{proof}
Since $L(\Phi) \leq 0$,
\[    \int_\tau^t \exp \left(2\eta_s^\varepsilon(\Phi) (t-s) L(\Phi) \right)   \, \mathrm{d}s \leq t-\tau \leq t. \]
Recalling \eqref{eq-cdeff},
\[ -L(\Phi) \leq \frac{C(\Phi)}{2}. \]
Hence combining these,
\[   2 L(\Phi)  \int_\tau^t \exp \left(2\eta_s^\varepsilon(\Phi) (t-s) L(\Phi) \right)   \, \mathrm{d}s \geq -C(\Phi)t. \]
Multiplying both sides by $\eta_t^\varepsilon(\Phi)$ gives the required identity.
\end{proof}
\begin{lem} \label{lem-gammavolbound}
For $\varepsilon$ sufficiently small, for any $\Phi \in \mathcal{G}(\varepsilon)$ and any $t \in [0,T]$,
\[ 1- \frac{1-\gamma(t)}{1-|\Delta|} \leq \varepsilon .\]
\end{lem}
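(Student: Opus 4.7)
The plan is to rewrite the left-hand side algebraically as
\[ 1-\frac{1-\gamma(t)}{1-|\Delta|} \;=\; \frac{\gamma(t) - |\Delta|}{1-|\Delta|}, \]
so that the inequality reduces to bounding the numerator from above and the denominator from below. Since $|\Delta|$ is a volume, it is non-negative, so in the case $\gamma(t) \leq |\Delta|$ the numerator is non-positive and the bound is trivial. It therefore suffices to handle the case $\gamma(t) > |\Delta|$.

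For the denominator, I would use the explicit expression \eqref{eq-voldelta} for $|\Delta|$ together with the good-tree velocity bound $|v(s) - v'| \leq 2 \mathcal{V}(\Phi) \leq 2 V(\varepsilon)$, giving $|\Delta| \leq 2\pi T \varepsilon^2 V(\varepsilon)$. Invoking assumption \eqref{eq-vassump2}, which gives $V(\varepsilon) \leq (8\varepsilon)^{-1/3}$, this is $O(\varepsilon^{5/3})$, so in particular $|\Delta| \leq 1/2$ for all $\varepsilon$ sufficiently small, and hence $1 - |\Delta| \geq 1/2$.

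For the numerator, the definition of $\gamma$ gives $\gamma(t) \leq n(\Phi) \varepsilon^2$, and the good-tree constraint $n(\Phi) \leq M(\varepsilon)$ combined with \eqref{eq-massump} yields $\gamma(t) \leq \varepsilon^{3/2}$. Putting the two estimates together,
\[ \frac{\gamma(t) - |\Delta|}{1-|\Delta|} \;\leq\; 2\gamma(t) \;\leq\; 2\varepsilon^{3/2} \;\leq\; \varepsilon \]
for $\varepsilon$ small enough (e.g.\ $\varepsilon \leq 1/4$). No substantial obstacle arises here: the lemma is essentially bookkeeping, verifying that both $\gamma(t)$ and $|\Delta|$ vanish strictly faster than $\varepsilon$ under the constraints \eqref{eq-vassump2} and \eqref{eq-massump} defining $\mathcal{G}(\varepsilon)$. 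The only point where care is needed is to correctly inherit the velocity cap and collision-count cap from the definition of good trees when estimating $|\Delta|$ and $\gamma(t)$ respectively.
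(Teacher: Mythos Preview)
Your proposal is correct and follows essentially the same approach as the paper: both rewrite the left-hand side as $\frac{\gamma(t)-|\Delta|}{1-|\Delta|}$, bound $|\Delta|\le 2\pi T\varepsilon^2 V(\varepsilon)$ via the velocity cap and \eqref{eq-vassump2}, bound $\gamma(t)\le\varepsilon^2 M(\varepsilon)\le\varepsilon^{3/2}$ via \eqref{eq-massump}, and conclude that the quotient is $o(\varepsilon^{1/2})\cdot\varepsilon$ hence $\le\varepsilon$ for small $\varepsilon$. The only cosmetic difference is that you bound the denominator below by $1/2$ explicitly, whereas the paper keeps it symbolic and shows the ratio divided by $\varepsilon$ tends to zero.
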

\begin{proof}
We show that for $\varepsilon$ sufficiently small,
\[ \frac{1}{\varepsilon}\left(1- \frac{1-\gamma(t)}{1-|\Delta|}\right) \leq 1.  \]
Note,
\begin{align*}
|\Delta| & = \pi \varepsilon^2 \int_0^\tau |v(s)-v'| \, \mathrm{d}s - \hat{C}\varepsilon^3 \leq  \pi \varepsilon^2 \int_0^\tau |v(s)-v'| \, \mathrm{d}s
\leq \pi  \varepsilon^2  2T V(\varepsilon).
\end{align*}
Recalling \eqref{eq-massump},
\begin{align*}
1- \frac{1-\gamma(t)}{1-|\Delta|} & = \frac{1-|\Delta|}{1-|\Delta|}- \frac{1-\gamma(t)}{1-|\Delta|} = \frac{\gamma(t)-|\Delta|}{1-|\Delta|} \\
& \leq \frac{\gamma(t)}{1-|\Delta|} \leq \frac{\varepsilon^2n(\Phi)}{1-2\varepsilon^2 \pi TV(\varepsilon) } \leq \frac{\varepsilon^2M(\varepsilon)}{1-2\varepsilon^2 \pi TV(\varepsilon) }  \leq \frac{\varepsilon^{3/2}}{1-2\varepsilon^2 \pi TV(\varepsilon) }.
\end{align*}
Therefore
\[  \frac{1}{\varepsilon}\left(1- \frac{1-\gamma(t)}{1-|\Delta|}\right) \leq \frac{\varepsilon^{1/2}}{1-2\varepsilon^2 \pi TV(\varepsilon) }, \]
which, by recalling \eqref{eq-vassump2}, converges to zero as $\varepsilon$ converges to zero. Hence for $\varepsilon$ sufficiently small the right hand side is less than $1$.

\end{proof}
\begin{proof}[Proof of proposition~\ref{prop-ptphatcomp}]
We prove by induction on the degree of $\Phi$. Firstly we show that the proposition holds for $\Phi \in \mathcal{T}_0 \cap \mathcal{G}(\varepsilon)$. Now if $\Phi \in \mathcal{T}_0 \cap \mathcal{G}(\varepsilon)$ it follows that $\tau =0$ and hence,
\begin{align*}
\hat{P}_\tau^\varepsilon(\Phi) & = \hat{P}_0^\varepsilon(\Phi) = \zeta(\varepsilon)f_0(x_0,v_0) = \zeta(\varepsilon)P_0(\Phi)  = R_\tau^\varepsilon(\Phi).
\end{align*}
By lemma~\ref{lem-pphatcomphelp} and \ref{lem-rho0bound} for $t \geq 0$,
\begin{align*}
\hat{P}_t^\varepsilon(\Phi) - R_t^\varepsilon(\Phi) & \geq  2\eta_t^\varepsilon(\Phi) L(\Phi) R_t^\varepsilon(\Phi) \int_\tau^t \exp \left(2\eta_s^\varepsilon(\Phi) (t-s) L(\Phi) \right)   \, \mathrm{d}s \geq - \rho^{\varepsilon,0}_t(\Phi)R_t^\varepsilon(\Phi) \\
& = - \hat{\rho}^{\varepsilon}_t(\Phi)R_t^\varepsilon(\Phi).
\end{align*}
Proving the proposition in the base case. Now suppose that the proposition holds true for all trees $\Phi \in \mathcal{T}_{k-1}\cap \mathcal{G}(\varepsilon)$ for some $k \geq 1$ and let $\Psi \in \mathcal{T}_{k}\cap \mathcal{G}(\varepsilon)$. For $t<\tau$ the proposition holds trivially, so we consider $t\geq \tau$.
Now recall that,
\[ \hat{P}_\tau^\varepsilon(\Psi) = \frac{1-\gamma(t)}{1-|\Delta|}\hat{P}_\tau^\varepsilon(\bar{\Psi})g_0(v')[(v(\tau^-)-v')\cdot \nu]_+ , \]
and,
\[ R_\tau^\varepsilon(\Psi) = R_\tau^\varepsilon(\bar{\Psi})g_0(v')[(v(\tau^-)-v')\cdot \nu]_+ . \]
Further since $\bar{\Psi} \in \mathcal{T}_{k-1}$ we know by our inductive assumption that the proposition holds for $\bar{\Psi}$, which implies.
\[  \hat{P}^\varepsilon_t(\bar{\Psi}) \geq R^\varepsilon_t(\bar{\Psi}) - \hat{\rho}^\varepsilon_t(\bar{\Psi})R_t^\varepsilon(\bar{\Psi}). \]
Hence by the estimate in lemma~\ref{lem-gammavolbound} for $\varepsilon$ sufficiently small,
\begin{align} \label{eq-ptauindstep}
 \hat{P}_\tau^\varepsilon(\Psi) - R_\tau^\varepsilon(\Psi) & = g_0(v')[(v(\tau^-)-v')\cdot \nu]_+ \left(\frac{1-\gamma(t)}{1-|\Delta|}\hat{P}_\tau^\varepsilon(\bar{\Psi}) - R_\tau^\varepsilon(\bar{\Psi}) \right) \nonumber \\
 & \geq g_0(v')[(v(\tau^-)-v')\cdot \nu]_+ \left((1-\varepsilon)\hat{P}_\tau^\varepsilon(\bar{\Psi}) - R_\tau^\varepsilon(\bar{\Psi}) \right) \nonumber \\
 & \geq  g_0(v')[(v(\tau^-)-v')\cdot \nu]_+ \left((1-\varepsilon)(R^\varepsilon_\tau(\bar{\Psi}) - \hat{\rho}_t(\bar{\Psi})R_\tau^\varepsilon(\bar{\Psi})) - R_\tau^\varepsilon(\bar{\Psi}) \right) \nonumber \\
 & = g_0(v')[(v(\tau^-)-v')\cdot \nu]_+ R_\tau^\varepsilon(\bar{\Psi})  \left(1-\varepsilon - (1-\varepsilon)\hat{\rho}^\varepsilon_\tau(\bar{\Psi}) -1  \right)\nonumber \\
 & = g_0(v')[(v(\tau^-)-v')\cdot \nu]_+ R_\tau^\varepsilon(\bar{\Psi})  \left(-\varepsilon - (1-\varepsilon)\hat{\rho}^\varepsilon_\tau(\bar{\Psi})   \right) = R_\tau^\varepsilon(\Psi)  \left(-\varepsilon -(1-\varepsilon) \hat{\rho}^\varepsilon_\tau(\bar{\Psi})   \right) .
\end{align}
Since the trajectory of the root particle of $\Psi$ up to time $\tau$ is equal to the trajectory of the root particle of $\bar{\Psi}$ up to time $\tau$ and recalling that for any $\Phi$, $\eta_t^\varepsilon(\Phi)$ is non-decreasing in $t$, it follows,
\begin{align*}
\eta_\tau^\varepsilon(\bar{\Psi}) & =\int_{U \times \mathbb{R}^3} g_0(\bar{v})(1- \mathbbm{1}_\tau^\varepsilon[\bar{\Psi}](\bar{x},\bar{v})) \, \mathrm{d}\bar{x} \, \mathrm{d}\bar{v} \nonumber \\
& = \int_{U \times \mathbb{R}^3} g_0(\bar{v})(1- \mathbbm{1}_\tau^\varepsilon[\Psi](\bar{x},\bar{v})) \, \mathrm{d}\bar{x} \, \mathrm{d}\bar{v},
 = \eta_\tau^\varepsilon(\Psi) \leq \eta_t^\varepsilon(\Psi),
\end{align*}
and recalling \eqref{eq-cdeff},
\begin{align*}
C(\bar{\Psi}) \leq C(\Psi).
\end{align*}
Hence,
\begin{align*}
\rho^{\varepsilon,0}_\tau(\bar{\Psi}) & = \eta^\varepsilon_\tau(\bar{\Psi}) C(\bar{\Psi})\tau \leq \eta_t^\varepsilon(\Psi)C(\Psi) t= \rho^{\varepsilon,0}_t(\Psi).
\end{align*}
Recalling \eqref{eq-rhoalt}, this implies,
\begin{align*}
\hat{\rho}^{\varepsilon}_\tau(\bar{\Psi}) & = \rho^{\varepsilon,k-1}_\tau(\bar{\Psi})  \leq \rho^{\varepsilon,k-1}_t(\Psi).
\end{align*}
Substituting this into \eqref{eq-ptauindstep},
\begin{align*} \label{eq-phatrindstep2}
 \hat{P}_\tau^\varepsilon(\Psi) - R_\tau^\varepsilon(\Psi) & \geq R_\tau^\varepsilon(\Psi)  \left(-\varepsilon -(1-\varepsilon) \hat{\rho}^\varepsilon_\tau(\bar{\Psi})   \right) \nonumber \\
 & \geq R_\tau^\varepsilon(\Psi)  \left(-\varepsilon -(1-\varepsilon) \rho^{\varepsilon,k-1}_t(\Psi)  \right) \nonumber \\
 & = -R_\tau^\varepsilon(\Psi)  \left(\varepsilon +(1-\varepsilon) \rho^{\varepsilon,k-1}_t(\Psi)  \right) .
\end{align*}
Recalling \eqref{eq-rexpdecay},
\begin{align*}
\exp & \left( L(\Psi) \int_\tau^t(1+2\eta_s^\varepsilon(\Psi))   \, \mathrm{d}s \right) (\hat{P}_\tau^\varepsilon(\Psi) - R_\tau^\varepsilon(\Psi)) \\
& \geq - \exp\left( L(\Psi) \int_\tau^t (1+2\eta_s^\varepsilon(\Psi)) \, \mathrm{d}s \right)R_\tau^\varepsilon(\Psi)  \left(\varepsilon +(1-\varepsilon) \rho^{\varepsilon,k-1}_t(\Psi)  \right) \\
& = - \exp\left(L(\Psi) \int_\tau^t 2\eta_s^\varepsilon(\Psi)   \, \mathrm{d}s \right)R_t^\varepsilon(\Psi)  \left(\varepsilon +(1-\varepsilon) \rho^{\varepsilon,k-1}_t(\Psi)  \right) \\
& \geq - R_t^\varepsilon(\Psi)  \left(\varepsilon +(1-\varepsilon) \rho^{\varepsilon,k-1}_t(\Psi)  \right).
\end{align*}
Recalling lemma~\ref{lem-pphatcomphelp} and \ref{lem-rho0bound} this gives,
\begin{align*}
\hat{P}_t^\varepsilon(\Psi) - R_t^\varepsilon(\Psi) & \geq \exp\left(  \int_\tau^t L(\Psi)(1+2\eta_s^\varepsilon(\Psi)) \, \mathrm{d}s \right)(\hat{P}_\tau^\varepsilon(\Psi) - R_\tau^\varepsilon(\Psi)) \\
&  \qquad +2\eta_t^\varepsilon(\Psi) L(\Psi) R_t^\varepsilon(\Psi) \int_\tau^t \exp \left(2\eta_s^\varepsilon(\Psi) (t-s) L(\Psi) \right)   \, \mathrm{d}s \\
& \geq - R_t^\varepsilon(\Psi)  \left(\varepsilon +(1-\varepsilon) \rho^{\varepsilon,k-1}_t(\Psi)  \right) - \rho_t^{\varepsilon,0}(\Psi)R_t^\varepsilon(\Psi) \\
& \geq -R_t^\varepsilon(\Psi) \left( \varepsilon +(1-\varepsilon) \rho^{\varepsilon,k-1}_t(\Psi) +\rho_t^{\varepsilon,0}(\Psi)  \right) \\
& =  -R_t^\varepsilon(\Psi) \rho_t^{\varepsilon,k}(\Psi)  = -R_t^\varepsilon(\Psi) \hat{\rho}_t^{\varepsilon}(\Psi).
\end{align*}
This completes the proof of the inductive step which concludes the proof of the proposition.
\end{proof}
\begin{prop} \label{prop-goodtreefull} Good trees have full measure in the sense that
\[ \lim_{\varepsilon \to 0}P_t(\mathcal{MT} \setminus \mathcal{G}(\varepsilon)) =0. \]
\end{prop}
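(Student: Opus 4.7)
The plan is to decompose the bad set according to which of the five conditions defining $\mathcal{G}(\varepsilon)$ fails and apply a union bound:
\begin{align*}
P_t(\mathcal{MT}\setminus\mathcal{G}(\varepsilon)) \leq \,& P_t(\{n(\Phi)>M(\varepsilon)\}) + P_t(\{\mathcal{V}(\Phi)>V(\varepsilon)\}) \\
&{}+ P_t(\mathcal{MT}\setminus R(\varepsilon)) + P_t(\mathcal{MT}\setminus S(\varepsilon)) + P_t(\{\Phi\text{ grazing}\}),
\end{align*}
after which it suffices to show each of the five pieces tends to zero as $\varepsilon\to 0$.

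Four of the five bounds are routine moment estimates. Using the decomposition $P_t=\sum_{j\geq 0}P_t^{(j)}$ from Proposition~\ref{prop-pjsum} and the velocity moment \eqref{lem-ftmom} of $f_t$, together with the elementary bound $\int g_0(\bar v)|v-\bar v|\,\mathrm{d}\bar v\leq C(1+|v|)$ coming from \eqref{eq-g0l1assmp}, iteration of \eqref{eq-ptfull} shows $\int n(\Phi)P_t(\Phi)\,\mathrm{d}\Phi<\infty$, and Markov's inequality gives an $O(1/M(\varepsilon))$ bound on the first term. For the velocity tail, the collision rule yields the monotone estimate $\mathcal{V}(\Phi)\leq |v_0|+2\sum_{j=1}^{n(\Phi)}|v_j|$, and integrating against $P_t$ using \eqref{eq-f0assmp2} and the second moment of $g_0$ from \eqref{eq-g0l1assmp} gives a finite expectation, whence Markov produces $O(1/V(\varepsilon))$. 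For the initial-overlap set, the initial position $x_j-t_jv_j$ of each involved background particle lies within $\varepsilon$ of $x_0$ on a set of volume $\tfrac43\pi\varepsilon^3$; integrating $g_0$ and summing over the at most $M(\varepsilon)$ involved particles yields $O(\varepsilon^3 M(\varepsilon))=o(1)$ by \eqref{eq-massump}. Grazing trees form a Lebesgue-null set while $P_t$ is Lebesgue-absolutely continuous by the explicit formula \eqref{eq-ptfull}, so this contribution is identically zero.

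The main obstacle is the re-collision bound $P_t(\mathcal{MT}\setminus R(\varepsilon))\to 0$. Because $U$ is a flat torus, straight-line background trajectories recur, so controlling accidental re-entries into the $\varepsilon$-tube around the piecewise-linear root trajectory is a genuinely geometric problem. My plan is to restrict to the event $\{\mathcal{V}(\Phi)\leq V(\varepsilon),\,n(\Phi)\leq M(\varepsilon)\}$, whose complement is already handled above, and, for each collision index $j$, to bound the $g_0$-measure of background velocities $v_j$ such that the trajectory $x_j+sv_j$ re-approaches the root within distance $\varepsilon$ on $(t_j,T]$. For relative velocities bounded by $V(\varepsilon)$, each excursion contributes a tube of cross-section $O(\varepsilon^2)$ and length at most $O(TV(\varepsilon))$; summing over the $O(M(\varepsilon))$ collision indices and both root segments on either side gives a schematic bound of the form $C\varepsilon^2 M(\varepsilon)V(\varepsilon)$, which the calibrations \eqref{eq-vassump2} and \eqref{eq-massump} render $o(1)$, with the large-velocity tails of $g_0$ absorbed by \eqref{eq-g0linf5}. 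This geometric step is the technical heart of the proof; all other pieces reduce to moment estimates.
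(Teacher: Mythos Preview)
Your union-bound strategy is sound in principle and would yield quantitative rates, but it is far more laborious than the paper's argument, and your re-collision step remains only a sketch (and you have the time interval backwards: Definition~\ref{def-recollisionfree} concerns $t<t_j$, not $(t_j,T]$). The paper bypasses all of the hard geometry by exploiting monotonicity: since $\mathcal{G}(\varepsilon)$ increases to $\mathcal{G}(0)$ as $\varepsilon\downarrow 0$, by continuity of the finite measure $P_t$ it suffices to show $P_t(\mathcal{G}(0))=1$. At $\varepsilon=0$ the constraints $n(\Phi)\leq M$ and $\mathcal{V}(\Phi)\leq V$ become vacuous (since $M(0)=V(0)=\infty$), initial overlap and grazing are null, and the key observation is that $\mathcal{MT}\setminus R(0)$ is a lower-dimensional submanifold of $\mathcal{MT}$ (an exact re-hit at diameter zero imposes a codimension-one constraint on the collision parameters), hence Lebesgue-null, hence $P_t$-null by absolute continuity. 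This reduces what you identify as ``the technical heart of the proof'' to a dimension count. Your approach buys explicit decay rates in $\varepsilon$; the paper's buys a two-line proof that needs no tube estimates, no torus-recurrence analysis, and no moment bounds on $n(\Phi)$ or $\mathcal{V}(\Phi)$.
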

\begin{proof}
Firstly we prove that $\mathcal{G}(0)$ is of measure 1. To this aim recall the definition of $R(\varepsilon)$ from definition~\ref{def-recollisionfree} and the definition of $\mathcal{T}_j$ \eqref{eq-tjdeff}. Define for $j \in \mathbb{N} \cup \{0 \}$,
\[ R_j(\varepsilon) := \mathcal{T}_j \cap R(\varepsilon).  \]
Now note that $\mathcal{T}_0 \setminus R_0(\varepsilon)$ and $\mathcal{T}_1 \setminus R_1(\varepsilon)$ are empty, since it is not possible for a tree in $\mathcal{T}_0$ or $\mathcal{T}_1$ to have a re-collision. For a general $\Phi \in \mathcal{T}_2 $, $\Phi$ is of the form,
\[ \Phi = \big( (x_0,v_0),(s_1,\nu_1,v_1),(s_2,\nu_2,v_2) \big). \]
If we require that $\Phi \in \mathcal{T}_2 \setminus R_2(0)$ then there must be a re-collision and since the background particles do not change velocity this requires that $v_1=v_2$. Hence $\mathcal{T}_2 \setminus R_2(0)$ is a submanifold of $\mathcal{T}_2$ with,
\[ \dim(\mathcal{T}_2 \setminus R_2(0)) < \dim(\mathcal{T}_2).  \]
This argument holds true for all $j\geq 2$ and hence, because $\mathcal{MT} = \cup _{j=0}^{\infty} \mathcal{T}_j$ we see that,
\[ P_t(\mathcal{MT}\setminus R(0))=1.\]
Therefore it follows, since the other requirements on $\mathcal{G}(0)$ are clear, that
$P_t(\mathcal{MT}\setminus \mathcal{G}(0))=1$, and hence also that $P_t( \mathcal{G}(0))=1$.

Since $\mathcal{G}(\varepsilon)$ is increasing as $\varepsilon$ decreases and $\lim_{\varepsilon \to 0}\mathcal{G}(\varepsilon)=\mathcal{G}(0)$ it follows by the dominated convergence theorem that,
\[ \lim_{\varepsilon \to 0}P_t( \mathcal{G}(\varepsilon)) =P_t(\mathcal{G}(0)) =1.  \]
Hence
\[ \lim_{\varepsilon \to 0}P_t( \mathcal{MT} \setminus \mathcal{G}(\varepsilon)) =0, \]
as required.
\end{proof}
\begin{lem} \label{lem-convbounds}
Recall $\beta$ \eqref{eq-beta}. For $\varepsilon>0$, $\Phi \in \mathcal{G}(\varepsilon)$ there exists constants $C_1,C_2> 0$ such that,
\begin{align}
\eta_t^\varepsilon(\Phi)& \leq C_1\varepsilon^2(\beta+V(\varepsilon))\label{eq-convbounds1}, \\
C(\Phi) & \leq C_2 (\beta+V(\varepsilon)). \label{eq-convbounds2} \end{align}
\end{lem}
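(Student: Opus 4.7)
The plan is to prove both inequalities as direct consequences of the velocity cap built into the good tree set and the first moment assumption on $g_0$; most of the work has in fact already been carried out in the proof of Lemma~\ref{lem-pphatcomphelp}. The common ingredient is the pointwise estimate
\[
|v(s)-\bar v| \;\leq\; |v(s)| + |\bar v| \;\leq\; \mathcal{V}(\Phi) + |\bar v| \;\leq\; V(\varepsilon) + |\bar v|,
\]
which holds for all $s \in [0,T]$ because $\Phi \in \mathcal{G}(\varepsilon)$, together with
\[
\int_{\mathbb{R}^3} g_0(\bar v)\,|\bar v|\,\mathrm{d}\bar v \;\leq\; \int_{\mathbb{R}^3} g_0(\bar v)(1+|\bar v|)\,\mathrm{d}\bar v \;=\; \beta,
\]
and $\int g_0 = 1$.

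For \eqref{eq-convbounds2}, I would bound the integrand of $C(\Phi)$ by $[(v(t)-\bar v)\cdot \nu]_+ \leq |v(t)-\bar v|$, integrate over $\nu \in \mathbb{S}^2$ to get a factor $4\pi$ and then over $\bar v$ against $g_0$:
\[
\int_{\mathbb{S}^2}\int_{\mathbb{R}^3} g_0(\bar v)[(v(t)-\bar v)\cdot \nu]_+ \, \mathrm{d}\bar v \, \mathrm{d}\nu \;\leq\; 4\pi \int_{\mathbb{R}^3} g_0(\bar v)(V(\varepsilon)+|\bar v|)\,\mathrm{d}\bar v \;\leq\; 4\pi (V(\varepsilon)+\beta).
\]
Since this bound is uniform in $t \in [0,T]$, taking the supremum and multiplying by the prefactor $2$ from the definition of $C$ gives \eqref{eq-convbounds2} with $C_2 = 8\pi$.

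For \eqref{eq-convbounds1}, the key point is that $1 - \mathbbm{1}_t^\varepsilon[\Phi](\bar x,\bar v) = 1$ precisely when the straight line $s \mapsto \bar x + s\bar v$ comes within distance $\varepsilon$ of the trajectory $s \mapsto x(s)$ at some $s \in (0,t)$. For each fixed $\bar v$, the set of admissible $\bar x$ lies in a tube around $\{x(s)-s\bar v : s \in [0,t]\}$ of cross-sectional area at most $\pi\varepsilon^2$, so its $\bar x$-volume is bounded by $\pi\varepsilon^2 \int_0^t |v(s)-\bar v|\,\mathrm{d}s$. This is exactly the estimate already carried out in \eqref{eq-etabound}, which yields $\eta_t^\varepsilon(\Phi)\leq \pi \varepsilon^2 T(V(\varepsilon)+\beta)$, so \eqref{eq-convbounds1} follows with $C_1 = \pi T$.

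There is no genuine obstacle here: both bounds amount to combining the cylindrical-tube volume estimate with the moment condition \eqref{eq-g0l1assmp}, and the only care needed is to track the prefactors ($4\pi$ from the sphere, $\pi$ from the tube cross-section, $T$ from the time integration, $2$ from the definition of $C$) so that $C_1$ and $C_2$ depend only on $T$.
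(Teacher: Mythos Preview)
Your proposal is correct and follows essentially the same approach as the paper: both bounds are obtained from the velocity cap $|v(s)|\leq V(\varepsilon)$ for good trees together with the first-moment bound on $g_0$, and for \eqref{eq-convbounds1} you invoke the same tube estimate \eqref{eq-etabound} to get $C_1=\pi T$. The only minor difference is that for \eqref{eq-convbounds2} the paper uses the exact identity $\int_{\mathbb{S}^2}[(v-\bar v)\cdot\nu]_+\,\mathrm{d}\nu=\pi|v-\bar v|$ to obtain $C_2=2\pi$, whereas you use the cruder pointwise bound $[(v-\bar v)\cdot\nu]_+\leq|v-\bar v|$ and pick up the full surface area $4\pi$, giving $C_2=8\pi$; this does not affect the argument.
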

\begin{proof}
Firstly by \eqref{eq-etabound},
\begin{align*}
\eta_t^\varepsilon(\Phi) & = \int_{U \times \mathbb{R}^3} g_0(\bar{v})(1- \mathbbm{1}_t^\varepsilon[\Phi](\bar{x},\bar{v})) \, \mathrm{d}\bar{x} \, \mathrm{d}\bar{v} \leq \pi \varepsilon^2 T (\beta+V(\varepsilon)).
\end{align*}
So take $C_1:= \pi T$ proving \eqref{eq-convbounds1}. Next note,
\begin{align*}
 \int_{\mathbb{S}^{2}} \int_{\mathbb{R}^3} g_0(\bar{v})[(v(t)-\bar{v})\cdot \nu ]_+ \, \mathrm{d}\bar{v} \, \mathrm{d}\nu & \leq \pi  \int_{\mathbb{R}^3} g_0(\bar{v})|v(t)-\bar{v}| \, \mathrm{d}\bar{v} \\
& \leq \pi \int_{\mathbb{R}^3} g_0(\bar{v})(\mathcal{V}(\Phi) + |\bar{v}|) \, \mathrm{d}\bar{v} \leq \pi (V(\varepsilon)+\beta).
\end{align*}
Hence by \eqref{eq-cdeff},
\begin{align*}
C(\Phi) & \leq 2 \pi (V(\varepsilon)+\beta).
\end{align*}
so take $C_2 :=2 \pi$ which proves \eqref{eq-convbounds2}.
\end{proof}
\begin{lem} \label{lem-rhokbound}
For any $\delta > 0$, there exists a  $\varepsilon'>0$ such that for $0<\varepsilon < \varepsilon'$ and for any $\Phi \in \mathcal{G}(\varepsilon)$,
\[ \hat{\rho}_t^{\varepsilon}(\Phi) < \delta. \]
\end{lem}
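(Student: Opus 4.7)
The plan is to obtain an explicit upper bound on $\hat{\rho}_t^\varepsilon(\Phi) = \rho_t^{\varepsilon,n(\Phi)}(\Phi)$ that is uniform in $\Phi \in \mathcal{G}(\varepsilon)$ and tends to zero as $\varepsilon \to 0$. The closed form \eqref{eq-rhoalt} is the natural starting point: summing the geometric series $\sum_{j=1}^k(1-\varepsilon)^{k-j} = (1-(1-\varepsilon)^k)/\varepsilon$ gives
\[
\rho_t^{\varepsilon,k}(\Phi) \;=\; (1-\varepsilon)^k \rho_t^{\varepsilon,0}(\Phi) + \bigl(\rho_t^{\varepsilon,0}(\Phi)+\varepsilon\bigr)\frac{1-(1-\varepsilon)^k}{\varepsilon},
\]
and since $(1-(1-\varepsilon)^k)/\varepsilon \le k$, this is bounded by
\[
\rho_t^{\varepsilon,k}(\Phi) \;\le\; \rho_t^{\varepsilon,0}(\Phi) + k\bigl(\rho_t^{\varepsilon,0}(\Phi)+\varepsilon\bigr).
\]

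Next I would control $\rho_t^{\varepsilon,0}(\Phi) = \eta_t^\varepsilon(\Phi)\,C(\Phi)\,t$ using Lemma~\ref{lem-convbounds}: for $\Phi \in \mathcal{G}(\varepsilon)$,
\[
\rho_t^{\varepsilon,0}(\Phi) \;\le\; C_1 C_2 T\,\varepsilon^2\bigl(\beta+V(\varepsilon)\bigr)^2.
\]
The assumption \eqref{eq-vassump2} gives $V(\varepsilon) \le (8\varepsilon)^{-1/3}$, whence $\varepsilon^2 V(\varepsilon)^2 \le (8)^{-2/3}\varepsilon^{4/3}$; therefore $\rho_t^{\varepsilon,0}(\Phi) = O(\varepsilon^{4/3})$ as $\varepsilon\to 0$, uniformly in $\Phi \in \mathcal{G}(\varepsilon)$.

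The last ingredient is the assumption \eqref{eq-massump}: since $n(\Phi) \le M(\varepsilon) \le \varepsilon^{-1/2}$, the displayed upper bound becomes
\[
\hat\rho_t^{\varepsilon}(\Phi) \;\le\; \rho_t^{\varepsilon,0}(\Phi) + M(\varepsilon)\bigl(\rho_t^{\varepsilon,0}(\Phi)+\varepsilon\bigr) \;\le\; O(\varepsilon^{4/3}) + \varepsilon^{-1/2}\bigl(O(\varepsilon^{4/3}) + \varepsilon\bigr) \;=\; O(\varepsilon^{5/6})+O(\varepsilon^{1/2}),
\]
which tends to zero as $\varepsilon \to 0$. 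Given $\delta>0$, choosing $\varepsilon'>0$ small enough that this bound lies below $\delta$ completes the argument. No step is particularly delicate; the only thing to check is that the two scaling assumptions \eqref{eq-vassump2} and \eqref{eq-massump} on the good-tree thresholds $V(\varepsilon), M(\varepsilon)$ are compatible in the sense that both the ``$\varepsilon^2 V(\varepsilon)^2$'' term from $\rho_t^{\varepsilon,0}$ and the ``$M(\varepsilon)\,\varepsilon$'' penalty from the iterative accumulation of tree height vanish. That is exactly why the paper fixes the exponents $\varepsilon V(\varepsilon)^3 \le 1/8$ and $M(\varepsilon) \le \varepsilon^{-1/2}$.
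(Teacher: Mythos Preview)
Your proof is correct and follows essentially the same approach as the paper: you bound the closed form \eqref{eq-rhoalt} by $\rho_t^{\varepsilon,0}(\Phi)+n(\Phi)(\rho_t^{\varepsilon,0}(\Phi)+\varepsilon)$, control $\rho_t^{\varepsilon,0}(\Phi)$ via Lemma~\ref{lem-convbounds} and \eqref{eq-vassump2}, and absorb the factor $n(\Phi)\le M(\varepsilon)\le\varepsilon^{-1/2}$ from \eqref{eq-massump}. The only cosmetic difference is that the paper splits the target $\delta$ into three pieces of size $\delta/3$ rather than tracking explicit $O(\varepsilon^{5/6})$, $O(\varepsilon^{1/2})$ rates.
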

\begin{proof}
Fix $\delta > 0$. Firstly by the above lemma,
\begin{align*}
\rho_t^{\varepsilon,0}(\Phi) & = \eta_t(\Phi)C(\Phi)t  \leq C_1C_2T \varepsilon^2(\beta+V(\varepsilon))^2.
\end{align*}
Recalling \eqref{eq-vassump2} there exists an $\varepsilon_1>0$ such that for $0<\varepsilon<\varepsilon_1$,  $$\rho_t^{\varepsilon,0}(\Phi)<\frac{\delta}{3}.$$
Further there exists an $\varepsilon_2>0$ such that for $0<\varepsilon<\varepsilon_2$,
\[  \frac{1}{\sqrt[]{\varepsilon}}\rho_t^{\varepsilon,0}(\Phi) \leq C_1C_2T \varepsilon^{3/2}(\beta+V(\varepsilon))^2 < \frac{\delta}{3}.  \]
Finally there exists an $\varepsilon_3>0$ such that for $0<\varepsilon<\varepsilon_3$, $\sqrt[]{\varepsilon}<\frac{\delta}{3}$.
Hence take $\varepsilon'=\min\{\varepsilon_1,\varepsilon_2,\varepsilon_3,1\}$ then for any $\Phi \in \mathcal{G}(\varepsilon)$,
\begin{align*}
\hat{\rho}^{\varepsilon}_t(\Phi) & = \rho^{\varepsilon,n(\Phi)}_t(\Phi)  =  (1-\varepsilon)^{n(\Phi)} \rho^{\varepsilon,0}_t (\Phi) + (\rho^{\varepsilon,0}_t (\Phi) + \varepsilon) \sum_{j=1}^{n(\Phi)}(1-\varepsilon)^{n(\Phi)-j} \\
& \leq \rho^{\varepsilon,0}_t (\Phi) + (\rho^{\varepsilon,0}_t (\Phi) + \varepsilon)\times  n(\Phi)
 \leq \rho^{\varepsilon,0}_t (\Phi) + M(\varepsilon) (\rho^{\varepsilon,0}_t (\Phi) + \varepsilon) \\
& = \rho^{\varepsilon,0}_t (\Phi) + M(\varepsilon) \rho^{\varepsilon,0}_t (\Phi) + M(\varepsilon)\varepsilon  \leq \rho^{\varepsilon,0}_t (\Phi) + \frac{1}{\sqrt[]{\varepsilon}} \rho^{\varepsilon,0}_t (\Phi) + \sqrt[]{\varepsilon} < \delta.
\end{align*}

\end{proof}

\begin{thm} \label{thm-ptphatcomp}
Uniformly for $t \in [0,T]$,
\[ \lim_{\varepsilon \to 0} \|P_t - \hat{P}^\varepsilon	_t\|_{TV}=0. \]
\end{thm}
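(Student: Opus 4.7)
\textbf{Proof proposal for Theorem~\ref{thm-ptphatcomp}.}
The plan is to estimate the TV norm directly by splitting $\mathcal{MT}$ into the good set $\mathcal{G}(\varepsilon)$ and its complement, and use the one-sided inequality from Proposition~\ref{prop-ptphatcomp} on the good part together with the tightness statement of Proposition~\ref{prop-goodtreefull} on the bad part. The first observation is that both $P_t$ and $\hat{P}^\varepsilon_t$ are probability measures: $P_t$ has mass $1$ by Theorem~\ref{thm-ideq} and Proposition~\ref{prop-linboltz}, and $\hat{P}^\varepsilon_t$ has mass $1$ by construction. Since $\int(\hat P_t^\varepsilon - P_t)\,\mathrm{d}\Phi = 0$, the positive and negative parts of the difference have the same integral, giving
\[
\|P_t-\hat P_t^\varepsilon\|_{TV} = 2 \int_{\mathcal{MT}} \bigl(P_t(\Phi)-\hat P_t^\varepsilon(\Phi)\bigr)_+ \,\mathrm{d}\Phi.
\]
This reformulation is the key trick: it reduces the problem to controlling only the ways in which $\hat P_t^\varepsilon$ can be \emph{smaller} than $P_t$, which is precisely what Proposition~\ref{prop-ptphatcomp} provides.

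On the good set I would rewrite Proposition~\ref{prop-ptphatcomp} as
\[
\hat P_t^\varepsilon(\Phi) \;\geq\; \zeta(\varepsilon)\bigl(1-\hat\rho^\varepsilon_t(\Phi)\bigr) P_t(\Phi),
\]
so that for $\Phi\in\mathcal{G}(\varepsilon)$,
\[
\bigl(P_t(\Phi)-\hat P_t^\varepsilon(\Phi)\bigr)_+ \;\leq\; \bigl(1-\zeta(\varepsilon)(1-\hat\rho^\varepsilon_t(\Phi))\bigr)P_t(\Phi).
\]
By Lemma~\ref{lem-rhokbound}, for any $\delta>0$ one has $\hat\rho^\varepsilon_t(\Phi)<\delta$ uniformly in $\Phi\in\mathcal{G}(\varepsilon)$ and $t\in[0,T]$, once $\varepsilon$ is small. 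Moreover $\zeta(\varepsilon)=(1-\tfrac{4}{3}\pi\varepsilon^3)^N$, and since $N\varepsilon^3=\varepsilon\to 0$ by \eqref{eq-boltzgrad}, $\zeta(\varepsilon)\to 1$ independently of $t$. Integrating and using $\int P_t\,\mathrm{d}\Phi = 1$ gives
\[
\int_{\mathcal{G}(\varepsilon)} (P_t-\hat P_t^\varepsilon)_+ \,\mathrm{d}\Phi \;\leq\; 1-\zeta(\varepsilon)(1-\delta),
\]
which is uniform in $t\in[0,T]$ and can be made arbitrarily small.

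On the complement $\mathcal{MT}\setminus\mathcal{G}(\varepsilon)$ I would use the crude bound $(P_t-\hat P_t^\varepsilon)_+\leq P_t$, so this piece is controlled by $P_t(\mathcal{MT}\setminus\mathcal{G}(\varepsilon))$, which tends to zero by Proposition~\ref{prop-goodtreefull}. Combining the two pieces gives $\|P_t-\hat P_t^\varepsilon\|_{TV}\leq 2[1-\zeta(\varepsilon)(1-\delta)] + 2P_t(\mathcal{MT}\setminus\mathcal{G}(\varepsilon))$, which proves convergence.

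The main obstacle I anticipate is obtaining the convergence \emph{uniformly} in $t\in[0,T]$. The $\zeta(\varepsilon)$ and $\hat\rho$ terms are already uniform in $t$, but the decomposition $\mathcal{MT}\setminus\mathcal{G}(\varepsilon)$ splits into trees with too many collisions, too large velocity, with a recollision, with grazing collisions, or with initial overlap, and each contribution must be estimated uniformly in $t$. For the velocity and collision count pieces, a Chebyshev argument using the finite velocity moments from the admissibility assumption (Definition~\ref{admissible}) and the moment bound \eqref{lem-ftmom} on $f_t$ obtained in Proposition~\ref{prop-linboltz} should yield bounds depending only on $T$ and the initial data. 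The recollision, grazing and overlap contributions vanish on submanifolds of lower dimension (as in the argument of Proposition~\ref{prop-goodtreefull}) and can be shown to be $o(1)$ uniformly in $t$ via the same lower-dimensional reasoning combined with the $t$-independent bound on $\|P_t\|_{L^1}$. Once uniformity is secured, passing $\varepsilon\to 0$ followed by $\delta\to 0$ completes the proof.
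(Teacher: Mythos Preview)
Your proposal is correct and follows essentially the same route as the paper. The only cosmetic difference is that the paper works with an arbitrary measurable set $S$, bounds $P_t(S)-\hat P_t^\varepsilon(S)$ from above using Proposition~\ref{prop-ptphatcomp}, Lemma~\ref{lem-rhokbound}, Proposition~\ref{prop-goodtreefull} and the Bernoulli bound $\zeta(\varepsilon)\ge 1-\tfrac{4}{3}\pi\varepsilon$, and then recovers the reverse inequality by replacing $S$ with $\mathcal{MT}\setminus S$; your use of $\|P_t-\hat P_t^\varepsilon\|_{TV}=2\int(P_t-\hat P_t^\varepsilon)_+\,\mathrm{d}\Phi$ is just the density version of the same complementation trick. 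Your discussion of the uniformity in $t$ is more careful than the paper's, which simply invokes Proposition~\ref{prop-goodtreefull} without isolating the $t$-dependence.
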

\begin{proof}
Let $\delta > 0$ and $S \subset \mathcal{MT}$ then,
\begin{align*}
P_t(S) - \hat{P}^\varepsilon	_t(S) &= P_t(S \cap \mathcal{G}(\varepsilon))+ P_t(S \setminus \mathcal{G}(\varepsilon)) - \hat{P}^\varepsilon	_t(S \cap \mathcal{G}(\varepsilon)) - \hat{P}^\varepsilon	 _t(S \setminus \mathcal{G}(\varepsilon)) \nonumber \\
& \leq P_t(S \cap \mathcal{G}(\varepsilon))+ P_t(S\setminus \mathcal{G}(\varepsilon)) - \hat{P}^\varepsilon	_t(S \cap \mathcal{G}(\varepsilon)).
\end{align*}
By proposition~\ref{prop-goodtreefull} for $\varepsilon$ sufficiently small,
\begin{align*}
P_t(S \setminus \mathcal{G}(\varepsilon))  \leq P_t(\mathcal{MT} \setminus \mathcal{G}(\varepsilon))< \frac{\delta}{3}.
\end{align*}
Hence,
\begin{equation} \label{eq-ptphatcomp1}
P_t(S) - \hat{P}^\varepsilon_t(S) < P_t(S \cap \mathcal{G}(\varepsilon))  - \hat{P}^\varepsilon_t(S \cap \mathcal{G}(\varepsilon)) +\frac{\delta}{3}.
\end{equation}
Recall the definition of $\zeta(\varepsilon)$ in \eqref{eq-zetadeff}. It is clear that this implies
\begin{equation} \label{eq-zetaupbound}
\zeta(\varepsilon) \leq 1.
\end{equation}
Hence by the above lemma, for $\varepsilon$ sufficiently small and $\Phi \in \mathcal{G}(\varepsilon)$,
\[ \zeta(\varepsilon) \hat{\rho}^{\varepsilon}_t(\Phi)\leq \hat{\rho}^{\varepsilon}_t(\Phi) < \frac{\delta}{3}. \]
Further by proposition~\ref{prop-ptphatcomp}, for $\Phi \in \mathcal{G}(\varepsilon)$,
\begin{equation} \label{eq-minushatpbound}
 -\hat{P}^\varepsilon_t(\Phi) \leq - R^\varepsilon_t(\Phi) +\hat{\rho}^\varepsilon_t(\Phi)R_t^\varepsilon(\Phi) .
\end{equation}
The Binomial inequality states that for $x \geq -1$ and $ N \in \mathbb{N}$,
\[ (1+x)^N \geq 1+Nx. \]
Hence for $\varepsilon>0$ such that $\frac{4}{3}\pi \varepsilon^3 \leq 1$ we apply this to $\zeta(\varepsilon)$ recalling \eqref{eq-boltzgrad},
\begin{align} \label{eq-zetalowbound}
\zeta(\varepsilon) & = (1-\frac{4}{3}\pi \varepsilon^3)^N  \geq 1-N\frac{4}{3}\pi \varepsilon^3  = 1-\frac{4}{3}\pi\varepsilon.
\end{align}
Hence for $\varepsilon$ sufficiently small \eqref{eq-minushatpbound} gives,
\begin{align*}
 P_t(\Phi)-\hat{P}^\varepsilon_t(\Phi) & \leq P_t(\Phi) - R^\varepsilon_t(\Phi) + \hat{\rho}^\varepsilon_t(\Phi)R_t^\varepsilon(\Phi) \nonumber \\
 &  = P_t(\Phi) - \zeta(\varepsilon)P_t(\Phi) + \hat{\rho}^\varepsilon_t(\Phi)\zeta(\varepsilon)P_t(\Phi) \leq \frac{4}{3}\pi\varepsilon P_t(\Phi) + \frac{\delta}{3} P_t(\Phi)  \leq \frac{2\delta}{3} P_t(\Phi).
\end{align*}
This holds for all $\Phi \in \mathcal{G}(\varepsilon)$ with $\varepsilon$ sufficiently small hence
\begin{align} \label{eq-ptphatcomp2}
 P_t(S \cap \mathcal{G}(\varepsilon))-\hat{P}^\varepsilon_t(S \cap \mathcal{G}(\varepsilon))  \leq \frac{2\delta}{3} P_t(S \cap \mathcal{G}(\varepsilon)) \leq \frac{2\delta}{3}.
\end{align}
By substituting \eqref{eq-ptphatcomp2} into \eqref{eq-ptphatcomp1}, for $\varepsilon$ sufficiently small,
\begin{equation} \label{eq-pphatcompdeltalower}
P_t(S) - \hat{P}^\varepsilon_t(S) < \delta.
\end{equation}
Since $\varepsilon$ did not depend on $S$ this holds true for every $S \subset \mathcal{MT}$. Hence for any $S \subset \mathcal{MT}$,
\begin{align*}
\hat{P}^\varepsilon_t(S) -  P_t(S) & = (1-\hat{P}^\varepsilon_t(\mathcal{MT}\setminus S) ) - (1-P_t(\mathcal{MT}\setminus S))
& = P_t(\mathcal{MT}\setminus S) - \hat{P}^\varepsilon_t(\mathcal{MT}\setminus S) < \delta.
\end{align*}
This together with \eqref{eq-pphatcompdeltalower} gives that, for $\varepsilon$ sufficiently small, for any $S \subset \mathcal{MT}$,
\[ |P_t(S) - \hat{P}^\varepsilon_t(S) |< \delta, \]
which completes the proof of the theorem.  \end{proof}

We can now prove the main theorem of the paper, theorem~\ref{thm-main}, which follows from the above theorem.
\begin{proof}[Proof of theorem~\ref{thm-main}.]
Recall for $\Omega \subset U\times \mathbb{R}^3$, $$S_t(\Omega):=\{ \Phi \in \mathcal{MT} : (x(t),v(t)) \in \Omega \}.$$
By theorem~\ref{thm-ideq},
\[ \int_\Omega f_t(x,v) \, \mathrm{d}x \, \mathrm{d}v = \int_{S_t(\Omega)} P_t(\Phi) \, \mathrm{d}\Phi = P_t(S_t(\Omega)) .\]
Also by definition of $\hat{P}_t^\varepsilon$,
\[ \int_\Omega \hat{f}^N_t(x,v) \, \mathrm{d}x \, \mathrm{d}v = \int_{S_t(\Omega)} \hat{P}_t^\varepsilon(\Phi) \, \mathrm{d}\Phi = \hat{P}_t^\varepsilon(S_t(\Omega))  .\]
Hence by theorem~\ref{thm-ptphatcomp},
\begin{align*}
\lim_{N \to \infty} \sup_{\Omega \subset U \times \mathbb{R}^3} & | \int_\Omega \hat{f}^N_t(x,v) - f_t(x,v) \, \mathrm{d}x \, \mathrm{d}v  |
= \lim_{\varepsilon \to 0} \sup_{\Omega \subset U \times \mathbb{R}^3} |P_t(S_t(\Omega)) - \hat{P}_t^\varepsilon(S_t(\Omega))| = 0,
\end{align*}
which completes the proof.
\end{proof}

\section{Proof of auxiliary results}
\label{sec:aux}

\subsection{Particle dynamics}

\begin{proof}[Proof of Prop \ref{prop:wellposedHS}]
The dynamics become undefined if there is instantaneously more than one background particle colliding with the tagged particle or if the tagged particle experiences an infinite number of collisions in finite time. We adapt a similar proof for the full hard-spheres dynamics from  \cite[Prop 4.1.1]{saintraymond13}.

Let $R>0$ and $\delta<\varepsilon/2$ such that there exists a $K\in \mathbb{N}$ with $T=K\delta$. Denote the ball of radius $R$ about $x$ in $\mathbb{R}^3$ by $B_R(x)$. For the initial position of the tagged particle $(x_0,v_0)\in U \times B_R(0)$ fixed define $I(x_0,v_0) \subset (U \times \mathbb{R}^3)^N $ by,
\begin{align*}
I(x_0,v_0) & :=  \{(x_1,v_1),\dots,(x_N,v_N)   \in (U \times B_R(0))^N : \textrm{ the tagged particle collides} \\
& \qquad \textrm{ with at least two background particles in the time interval } [0,\delta] \}.
\end{align*}
We bound the volume of this set. Firstly define
\[ I^1(x_0,v_0): = \{ (x_1,v_1) \in U \times B_R(0): \varepsilon \leq |x_0-x_1| \leq \varepsilon+2R\delta  \}.  \]
It can be seen that for some $C$,
\[ |I^1(x_0,v_0)| \leq CR^3 \times (2R\delta)^3. \]
Since $I(x_0,v_0)$ is a subset of,
\begin{align*}
 \{ (x_1,v_1) & ,\dots,(x_N,v_N)   \in (U \times B_R(0))^N : \exists \, 1 \leq i < j \leq N \textrm{ such that }\\
 & \qquad \varepsilon \leq |x_0-x_i| \leq \varepsilon+2R\delta \textrm{ and } \varepsilon \leq |x_0-x_j| \leq \varepsilon+2R\delta  \}
\end{align*}
The above estimate gives, for some constant $C=C(N,\varepsilon)$,
\begin{align*}
|I(x_0,v_0)| & \leq CR^{3(N-2)} \times (R^6\delta^3)^2 \\
& \leq CR^{3(N+2)} \delta^6.
\end{align*}
Hence if we define,
\[ I:= \cup \{I(x_0,v_0) : (x_0,v_0) \in U \times B_R(0)\}, \]
it follows,
\[ |I| \leq CR^{3(N+3)} \delta^6.  \]
Hence there exists a subset $I_0(\delta,R)$ of measure at most $CR^{3(N+3)} \delta^6$ such that for any initial configuration in $(U\times B_R(0))^{N+1} \setminus I_0(\delta,R)$ the tagged particle experiences at most one collision in $[0,\delta]$.

Now consider the system at time $\delta$. Since all particles had initial velocity in $B_R(0)$ and the tagged particle had at most one collision in time $[0,\delta]$ the velocity of the tagged particle at time $\delta$ is in $B_{2R}(0)$. By the same arguments above there exists a set $I_1(\delta,R)$ of measure at most $CR^{3(N+3)} \delta^6$ for some new constant $C$ such that for any initial configuration in $(U\times B_R(0))^{N+1} \setminus I_0(\delta,R)\cup I_1(\delta,R)$ the tagged particle experiences at most one collision in $[0,\delta]$ and at most one collision in $[\delta,2\delta]$ and thus the dynamics are well defined up to $2\delta$.

Continue this process $K$ times defining the set,
\[ I(\delta,R) := \cup_{j=0}^{K-1}  I_j(\delta,R), \]
which has measure at most $CR^{3(N+3)} \delta^6$ for some new constant $C$ and such that for any initial configuration in $(U\times B_R(0))^{N+1} \setminus I(\delta,R)$ the tagged particle has at most one collision per time interval $[j\delta, (j+1)\delta ]$ and hence the dynamics are well defined up to time $T$. Defining,
\[ I(T,R):= \cap_{\delta > 0 } I(\delta,R), \]
if follows $I(T,R)$ is of measure zero and for any any initial configuration in $(U\times B_R(0))^{N+1} \setminus I(T,R)$ the dynamics are well defined up to time $T$.

Finally take,
\[ I := \cup_{R \in \mathbb{N} } I(T,R) \]
and note that $I$ is a countable union of measure zero sets and for any initial configuration in $(U\times \mathbb{R}^3) \setminus I$ the dynamics are well defined up to time $T$.
\end{proof}

\subsection{Semigroup theory}

\begin{lem} \label{lem-tsemi}
$A$ is the generator of the substochastic semigroup  (c.f.  \cite[Section 10.2]{arlotti06}) $T(t):L^1(U \times \mathbb{R}^3) \to L^1(U \times \mathbb{R}^3)$ given by,
\begin{equation}\label{eq-tsemi}
(T(t)f)(x,v) := \exp \left( -t \int_{\mathbb{S}^2} \int_{\mathbb{R}^3} g_0(\bar{v}) [(v-\bar{v})\cdot \nu]_+ \, \mathrm{d}\bar{v} \, \mathrm{d}\nu  \right) f(x-tv,v). \end{equation}
\end{lem}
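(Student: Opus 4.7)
The plan is to verify directly from the explicit formula that $T(t)$ defines a $C_0$ semigroup on $L^1(U\times\mathbb{R}^3)$, that it is positivity-preserving and contractive (hence substochastic in the sense of Arlotti-Banasiak-Lods), and finally that its infinitesimal generator coincides with the operator $A$ from Definition~\ref{deff-aoperator}. To set notation, write
\[ \mu(v) := \int_{\mathbb{S}^2}\int_{\mathbb{R}^3} g_0(\bar v)[(v-\bar v)\cdot\nu]_+ \, \mathrm{d}\bar v\, \mathrm{d}\nu, \]
so that $Q^-[f](x,v) = \mu(v)f(x,v)$. The crucial features are that $\mu$ depends on $v$ alone, $\mu\geq 0$, and admissibility of $g_0$ yields the linear growth bound $\mu(v)\lesssim 1+|v|$.

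With this notation, $T(0)=I$ is immediate, and because $\mu$ is independent of $x$ the semigroup law $T(t+s)=T(t)T(s)$ reduces to the identity $e^{-t\mu(v)}e^{-s\mu(v)}=e^{-(t+s)\mu(v)}$ combined with the composition of translations $x\mapsto x-tv\mapsto x-(t+s)v$. Positivity is built into the formula; contractivity follows from the substitution $y=x-tv$ (measure-preserving on the torus $U$), which gives $\|T(t)f\|_{L^1} = \int e^{-t\mu(v)}|f(y,v)|\,\mathrm{d}y\,\mathrm{d}v \leq \|f\|_{L^1}$. For strong continuity at $t=0$ I would split
\[ T(t)f - f = e^{-t\mu(v)}\bigl[f(x-tv,v)-f(x,v)\bigr] + \bigl[e^{-t\mu(v)}-1\bigr]f(x,v), \]
bound the $L^1$ norm of the first summand by continuity of translation in $L^1(U\times\mathbb{R}^3)$, and the second by dominated convergence using $|e^{-t\mu(v)}-1|\leq 1$ together with the envelope $|f|\in L^1$.

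The main obstacle is the identification of the generator with $A$, in particular matching the domains. For $f\in C_c^\infty(U\times\mathbb{R}^3)$ I would rewrite
\[ \frac{T(t)f - f}{t} = e^{-t\mu(v)}\,\frac{f(x-tv,v)-f(x,v)}{t} + \frac{e^{-t\mu(v)}-1}{t}\,f(x,v), \]
and check that the right-hand side converges in $L^1$ norm to $-v\cdot\nabla_x f-\mu(v)f = Af$; compact support of $f$ together with local boundedness of $\mu$ make the $L^1$ convergence routine via dominated convergence. This shows that the generator $\tilde A$ of $T(t)$ extends $A|_{C_c^\infty}$. To upgrade this to the equality $\tilde A = A$ (domains included), the cleanest route is to observe that for every $\lambda>0$ the formula $R(\lambda)f(x,v):=\int_0^\infty e^{-\lambda t}(T(t)f)(x,v)\,\mathrm{d}t$ provides an explicit resolvent, and a direct calculation shows $(\lambda-A)R(\lambda)f = f$ for every $f\in L^1$ and $R(\lambda)f\in D(A)$. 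Since $A$ is dissipative (its action on elements of $D(A)$ against the duality map can be checked by splitting into positive and negative parts of $f$), the Lumer-Phillips criterion, or equivalently Hille-Yosida, then forces $\tilde A = A$. Finally, substochasticity in the sense of \cite[Section 10.2]{arlotti06} is just the combination of positivity and contractivity already established.
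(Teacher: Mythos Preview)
Your proposal is correct and essentially self-contained, but it differs substantially from the paper's approach. The paper simply invokes \cite[Theorem~10.4]{arlotti06}, an off-the-shelf result for transport operators with absorption: one checks the hypotheses (A1)--(A3) of that theorem (trivial here, since there is no force field and the absorption coefficient $\mu(v)$ is locally integrable), and the theorem directly returns both the substochastic semigroup and the explicit formula for $T(t)$. Your route instead builds everything by hand: verifying the semigroup axioms and strong continuity from the explicit formula, computing the generator on $C_c^\infty$, and then closing the domain identification via the explicit resolvent and Lumer--Phillips. What you gain is independence from the specialized reference and a transparent, elementary argument; what you lose is brevity, and your domain-matching step (showing $R(\lambda)f\in D(A)$ in the sense of Definition~\ref{deff-aoperator} and that $A$ is dissipative on exactly that domain) is somewhat compressed and would need a couple more lines to be fully airtight. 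Either approach is adequate for the lemma.
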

\begin{proof}
We seek to apply theorem~10.4 of \cite{arlotti06}. Conditions (A1), (A2) trivially hold since $F=0$ in our situation. As for (A3),
\begin{equation} \label{eq-arlottinu}
\nu(x,v)= \int_{\mathbb{S}^2} \int_{\mathbb{R}^3} g_0(\bar{v}) [(v-\bar{v})\cdot \nu]_+ \, \mathrm{d}\bar{v} \, \mathrm{d}\nu.
\end{equation}
This is locally integrable so (A3) holds. Hence we can apply the theorem. In our case,
\[ \varphi(x,v,t,s)=x-(t+s)v. \]
So the semigroup is given by
\[ (T(t)f)(x,v) := \exp \left( -t \int_{\mathbb{S}^2} \int_{\mathbb{R}^3} g_0(\bar{v}) [(v-\bar{v})\cdot \nu]_+ \, \mathrm{d}\bar{v} \, \mathrm{d}\nu  \right) f(x-tv,v), \]
as required.
\end{proof}

\begin{rmk}
For $v,v_* \in \mathbb{R}^3, v \neq v_*$ define the Boltzmann kernel $k$ by,
\begin{equation} \label{eq-k}
k(v,v_*) := \frac{1}{|v-v_*|} \int_{E_{v,v_*}} g_0(w) \, \mathrm{d}w,
\end{equation}
where $E_{v,v_*} = \{ w \in \mathbb{R}^3 : w\cdot (v-v_*) = v\cdot (v-v_*)  \}$.
By the use of Carleman's representation, first described in \cite{carleman57} (see also \cite[Section 3]{bisi14}),
\begin{align} \label{eq-bdeffalt}
(Bf)(x,v)& = Q^+[f](x,v) \nonumber \\
& = \int_{\mathbb{S}^2} \int_{\mathbb{R}^3} f(x,v')g_0(\bar{v}')[(v-\bar{v})\cdot \nu]_+ \, \mathrm{d}\bar{v} \, \mathrm{d}\nu \nonumber \\
& =  \int_{\mathbb{R}^3} k(v,v_*)f(x,v_*) \, \mathrm{d}v_*.
\end{align}
\end{rmk}
\begin{lem} \label{lem-mya7}
There exists a $C>0$ such that for any $V>0$,
\begin{equation}\label{eq-mya7}
\int_{|v|>V} k(v,v_*) \, \mathrm{d}v \leq C,
\end{equation}
for almost all $|v_*| \leq V$.
\end{lem}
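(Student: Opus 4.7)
The plan is to exchange the roles of $v$ and $w$ using the Carleman structure of $k$, after which the integral becomes an integral over the background velocity $w$ weighted by a geometric cap factor that vanishes precisely when the Thales sphere associated with $(v_*, w)$ sits inside the ball of radius $V$. Parametrising $v = v_* + r\omega$ with $r \geq 0$ and $\omega \in \mathbb{S}^2$ (so that $\mathrm{d}v = r^2\, \mathrm{d}r\, \mathrm{d}\omega$), and the plane $E_{v, v_*}$ by $w = v + y$ with $y \perp \omega$, the factor $1/|v-v_*|=1/r$ in the definition of $k$ cancels one power of $r$ and Fubini yields
\begin{equation*}
\int_{|v|>V} k(v, v_*)\, \mathrm{d}v \;=\; \int_{\mathbb{R}^3} g_0(w)\, h(w, v_*, V)\, \mathrm{d}w,
\end{equation*}
where $h(w, v_*, V) := \int_{\mathbb{S}^2} [(w - v_*) \cdot \omega]_+\, \mathbbm{1}_{|v(\omega)| > V}\, \mathrm{d}\omega$ and $v(\omega) := v_* + ((w-v_*)\cdot\omega)\,\omega$ is the orthogonal projection of $w$ onto the line through $v_*$ in direction $\omega$.

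The central geometric observation is that as $\omega$ varies over the hemisphere where $(w-v_*)\cdot\omega \geq 0$, the point $v(\omega)$ sweeps the Thales sphere $T_{v_*, w}$ of diameter $[v_*, w]$, centre $(v_* + w)/2$ and radius $|w-v_*|/2$; a direct computation of the surface area element on $T_{v_*,w}$ shows that $h(w, v_*, V)$ equals the area of the cap $T_{v_*, w}\cap\{|v|>V\}$ divided by $|w-v_*|$. The spherical-cap formula for the intersection of two spheres then makes this cap empty as soon as $T_{v_*, w}\subset B(0, V)$, a condition equivalent to $|v_*|^2 + |w|^2 \leq V^2$, and in all cases
\begin{equation*}
h(w, v_*, V) \;\leq\; \frac{\pi\, (|v_*|^2 + |w|^2 - V^2)^+}{|v_* + w|}.
\end{equation*}
Combined with the elementary identity $\int_{\mathbb{S}^2} \mathrm{d}\omega_w / |v_* + r\omega_w| = 4\pi/\max(|v_*|, r)$ (the exterior potential of a uniform sphere) and the inequality $|v_*|^2 + |w|^2 - V^2 \leq |w|^2$ valid for $|v_*| \leq V$, this gives, with $r = |w|$,
\begin{equation*}
\int_{\mathbb{S}^2} h(r\omega_w, v_*, V)\, \mathrm{d}\omega_w \;\leq\; \frac{4\pi^2\, r^2}{\max(|v_*|, r)}.
\end{equation*}

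With this angular bound I split the remaining radial integral of $g_0\cdot h$ at $r = V$. On $\{|w| > V\}$, the crude estimate $h \leq \pi |w - v_*| \leq 2\pi |w|$ together with the first-moment hypothesis \eqref{eq-g0l1assmp} controls the contribution by $2\pi \int g_0(w)|w|\, \mathrm{d}w \leq 2\pi\beta$. On $\{|w| \leq V\}$ the $L^\infty$ decay \eqref{eq-g0linf5} (or its relaxation \eqref{eq-g0l4eta}) is used. If $|v_*| \leq V/2$, the cap-support constraint $|w|^2 > V^2 - |v_*|^2 \geq 3V^2/4$ already rules out small $|w|$ and reduces the estimate to the previous case. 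If instead $V/2 < |v_*| \leq V$, then $\max(|v_*|, r) \geq V/2$ throughout, and inserting the weighted $L^\infty$ bound on $g_0$ leads, via the elementary estimates $\int_0^{|v_*|} r^4/(1+r^4)\,\mathrm{d}r \leq |v_*|$ and $\int_{|v_*|}^V r^3/(1+r^4)\,\mathrm{d}r \leq \tfrac{1}{4}\log((1+V^4)/(1+|v_*|^4)) \leq \tfrac{1}{4}\log 16$, to a contribution independent of $V$. Summing everything produces a constant $C$ independent of $V$ and of $v_*$ subject to $|v_*| \leq V$.

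The main obstacle is establishing the cap estimate for $h$. The trivial bound $h \leq \pi|w - v_*|$ gives only $\int k(v, v_*)\, \mathrm{d}v = L(v_*) \leq \pi(|v_*|+\beta)$, which grows linearly in $V$ and is useless for the claim. Everything hinges on extracting the geometric factor $(|v_*|^2+|w|^2 - V^2)/|v_*+w|$, which encodes the physically clear statement that most of the mass of $k(\cdot, v_*)$ remains in $B(0, V)$ whenever $|v_*|\leq V$.
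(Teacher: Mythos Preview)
Your argument is correct under the primary decay hypothesis \eqref{eq-g0linf5}, and it takes a genuinely different route from the paper's proof. The paper never computes the $v$-integral directly; instead it invokes a pointwise Grad--type bound
\[
k(v,v_*) \;\leq\; \frac{2\pi}{|v-v_*|}\, h\!\left(\tfrac14\Big[\,|v-v_*|+\tfrac{|v|^2-|v_*|^2}{|v-v_*|}\,\Big]^2\right),
\qquad
h(r)=\int_r^\infty z\,\sup_{|u|=z}g_0(u)\,\mathrm{d}z,
\]
drawn from \cite{arlott07,molinet77}, and then quotes the computation in \cite[Ex.~10.29]{arlotti06} to bound $\int_{|v|>V}k(v,v_*)\,\mathrm{d}v$ by $C\int_0^\infty r\,h(r^2)\,\mathrm{d}r$. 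Your approach replaces this black-box pointwise estimate by an explicit geometric identity: after the Fubini swap you recognise the $\omega$-integral as (a constant times) the area of a cap of the Thales sphere on $[v_*,w]$, bound that cap by $\pi(|v_*|^2+|w|^2-V^2)^+/|v_*+w|$, and finish with Newton's shell-potential formula $\int_{\mathbb{S}^2}|v_*+r\omega|^{-1}\,\mathrm{d}\omega=4\pi/\max(|v_*|,r)$. This is more elementary and entirely self-contained, at the cost of a longer computation and a case split on $|v_*|\lessgtr V/2$.

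One caveat: your parenthetical claim that the same argument covers the relaxed decay \eqref{eq-g0l4eta} for every $\eta>0$ is not justified by the estimates you wrote. In the regime $V/2<|v_*|\le V$ your radial integral becomes, up to constants,
\[
\frac{1}{|v_*|}\int_0^{|v_*|}\frac{r^4}{1+r^{3+\eta}}\,\mathrm{d}r
\;+\;\int_{|v_*|}^{V}\frac{r^3}{1+r^{3+\eta}}\,\mathrm{d}r,
\]
and for $0<\eta<1$ both terms grow like $|v_*|^{1-\eta}\le V^{1-\eta}$, which is not uniformly bounded. The paper's route through the pointwise bound does cover all $\eta>0$ (one checks $\int_0^\infty r\,h(r^2)\,\mathrm{d}r<\infty$ whenever $g_0(v)\lesssim (1+|v|)^{-3-\eta}$). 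For the lemma as stated under \eqref{eq-g0linf5} your proof is complete; if you want the full range of $\eta$, you would need to sharpen the cap bound or retain the factor $(|w|^2-r_0^2)$ more carefully.
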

\begin{proof}
A similar calculation to the proof of theorem~2.1 in \cite{arlott07} (see also \cite{molinet77}) allows us to show that in our case, for $v \neq v_*$,
\[k(v,v_*) \leq \frac{2\pi}{|v-v_*|} h \left( \frac{1}{4} \left[ |v-v_*|+ \frac{|v|^2 - |v_*|^2}{|v-v_*|} \right]^2 \right).  \]
Where $h:[0,\infty) \rightarrow \mathbb{R}$ is given by,
\[h(r):= \int_r^\infty  z \sup_{ \{ v \in \mathbb{R}^3: |v|=z\} } g_0(v) \, \mathrm{d}z. \]

Thus we can follow the calculations in \cite[Ex 10.29]{arlotti06} to see that, for $d=3$, for almost all $|v^*| \leq V$ and using assumption \eqref{eq-g0linf5},
\[ \int_{|v|>V} k(v,v_*) \, \mathrm{d}v \leq C\Gamma <\infty, \]
where $C>0$ is some geometric constant and
\[ \Gamma: = \int_0^\infty rh(r^2) \, \mathrm{d}r. \]
\end{proof}
\begin{proof}[Proof of proposition~\ref{prop-linboltz}]
By the definition of $A$ and $B$, \eqref{eq-linboltz} is now,
\begin{equation} \label{eq-linboltzoperatorform}
\begin{cases}
\partial_t f_t(x,v)&=(Af+Bf)(x,v) \\
f_{t=0}(x,v) &= f_0(x,v).
\end{cases}
\end{equation}
By lemma~\ref{lem-tsemi} and equations (10.86), (10.88) of \cite{arlotti06},  \cite[corollary 5.17]{arlotti06} holds for our equation, so we apply  \cite[theorem 5.2]{arlotti06}. Lemma~\ref{lem-mya7} allow us to further apply  \cite[theorem~10.28]{arlotti06}. Therefore we have an honest $C_0$ semigroup of contractions generated by $\overline{A+B}$, which we denote by $G(t)$. Finally by \cite[theorem~3.1.12]{arendt11}, \eqref{eq-linboltzoperatorform} has a unique mild solution for each $f_0 \in L^1(U \times \mathbb{R}^3)$.

It remains to show \eqref{lem-ftmom} and \eqref{lem-ftindb}.
Firstly we find a bound for the operator $B$. Then we prove \eqref{lem-ftmom} for $f_0 \in D(A)$ first before generalising to all $f_0$. Recall \eqref{eq-beta}. As $g_0$ is normalized,
\begin{align} \label{eq-g0betabound}
\int_{\mathbb{S}^2} \int_{\mathbb{R}^3} g_0(\bar{v}) [(v-\bar{v})\cdot \nu]_+ \, \mathrm{d}\bar{v} \, \mathrm{d}\nu  & \leq \int_{\mathbb{S}^2} \int_{\mathbb{R}^3} g_0(\bar{v}) |v-\bar{v}| \, \mathrm{d}\bar{v} \, \mathrm{d}\nu \nonumber \\
& \leq \pi \int_{\mathbb{R}^3} g_0(\bar{v}) (|v| + |\bar{v}|) \, \mathrm{d}\bar{v} \, \mathrm{d}\nu \leq \pi (|v| + \beta).
\end{align}
By (10.6) in \cite{arlotti06} for $f \in L^1(U \times \mathbb{R}^3)$ and by \eqref{eq-g0betabound} recalling \eqref{eq-arlottinu},
\begin{align} \label{eq-bfl1bound}
\int_{U\times \mathbb{R}^3} Bf (x,v) \, \mathrm{d}x \, \mathrm{d}v &= \int_{U\times \mathbb{R}^3} \int_{\mathbb{R}^3} k(v,v_*)f (x,v_*) \, \mathrm{d}v_* \, \mathrm{d}x \, \mathrm{d}v \nonumber \\
& = \int_{U\times \mathbb{R}^3} \nu(x,v)f(x,v) \, \mathrm{d}x \, \mathrm{d}v  \nonumber \\
& = \int_{U\times \mathbb{R}^3} f(x,v)  \int_{\mathbb{S}^2} \int_{\mathbb{R}^3} g_0(\bar{v}) [(v-\bar{v})\cdot \nu]_+ \, \mathrm{d}\bar{v} \, \mathrm{d}\nu  \, \mathrm{d}x \, \mathrm{d}v  \nonumber \\
& \leq \int_{U\times \mathbb{R}^3} f(x,v) \pi (|v| + \beta)\, \mathrm{d}x \, \mathrm{d}v.
\end{align}
Now suppose that $f_0 \in D(A)$. Then  \cite[corollary 5.17]{arlotti06} holds and so we apply  \cite[corollary 5.8]{arlotti06} which gives,
\begin{equation} \label{eq-linboltzduhamelform}
f_t= T(t)f_0 + \int_0^t G(t-\theta)BT(\theta)f_0 \, \mathrm{d}\theta.
\end{equation}
Where $G(t)$ is the contraction semigroup generated by $\overline{A+B}$ and $T(t)$ the contraction semigroup generated by $A$. Hence by \eqref{eq-bfl1bound},
\begin{align*}
\int_{U\times \mathbb{R}^3} f_t(x,v)& (1+|v|) \, \mathrm{d}x \, \mathrm{d}v \\
& \leq \int_{U\times \mathbb{R}^3} T(t)f_0(x,v)(1+|v|)  + \int_0^t G(t-\theta)BT(\theta)f_0 \, \mathrm{d}\theta (x,v)(1+|v|) \, \mathrm{d}x \, \mathrm{d}v \\
& \leq \int_{U\times \mathbb{R}^3} f_0(x,v)(1+|v|) + t f_0 (x,v)(1+|v|) \pi (|v| + \beta) \, \mathrm{d}x \, \mathrm{d}v.
 \end{align*}
Noting that $t\in [0,T]$ and recalling our assumption on $f_0$ in \eqref{eq-f0assmp2}, this is bounded.

Consider a general $f_0$  in \eqref{eq-f0assmp2}, not necessarily in $D(A)$. Suppose for contradiction that \eqref{lem-ftmom} is not true. Hence there exists a $t \in [0,T]$ such that for any $C>0$ there exists an $R>0$ such that,
\[ \int_U \int_{B_R(0)} f_t(x,v)(1+|v|) \, \mathrm{d}v \, \mathrm{d}x \geq C.  \]
$D(A)$ is dense in $L^1(U\times \mathbb{R}^3)$ because it contains, for example, smooth compactly supported functions. Hence for any $n \in \mathbb{N}$ there exists an $f_0^n \in L^1(U\times \mathbb{R}^3)$ such that $f_0^n \in D(A)$, $f_0^n \geq 0$, there exists a $C_1>0$ such that
\begin{equation} \label{eq-f0n2ndmombound}
\int_{U \times \mathbb{R}^3} f_0^n (x,v)(1+|v|^2) \, \mathrm{d}x \, \mathrm{d}v  \leq C_1,
\end{equation}
and that,
\begin{equation} \label{eq-f0f0nbound}
\int_{U \times \mathbb{R}^3} |f_0^n (x,v)-f_0(x,v)|(1+|v|) \, \mathrm{d}x \, \mathrm{d}v  \leq \frac{1}{n}.
\end{equation}
Now define $f_t^n$ to be the solution of \eqref{eq-linboltz} with initial data given by $f_0^n$. In this case \eqref{eq-linboltzduhamelform} gives,
\[ f_t^n= T(t)f_0^n + \int_0^t G(t-\theta)BT(\theta)f_0^n \, \mathrm{d}\theta. \]
The argument above for  $f_0^n \in D(A)$ together with \eqref{eq-f0n2ndmombound} gives that there exists a $C_2>0$ independent of $n$ such that,
\begin{equation*} \label{eq-ftnlower}
 \int_{U \times \mathbb{R}^3} f_t^n (x,v)(1+|v|) \, \mathrm{d}x \, \mathrm{d}v \leq C_2.
\end{equation*}
By our contradiction assumption there exists an $R>0$ such that,
\begin{equation*}
\int_U \int_{B_R(0)} f_t(x,v)(1+|v|) \, \mathrm{d}v \, \mathrm{d}x \geq 2C_2.
\end{equation*}
These two bounds together give that,
\begin{align} \label{eq-ftcontupper}
\int_{U\times \mathbb{R}^3} |f_t(x,v)& -f_t^n(x,v)|(1+|v|) \, \mathrm{d}x \, \mathrm{d}v \nonumber \\
& \geq \int_U \int_{B_R(0)} |f_t(x,v) -f_t^n(x,v)|(1+|v|) \, \mathrm{d}v \, \mathrm{d}x \nonumber \\
& \geq \bigg| \int_U \int_{B_R(0)} f_t(x,v)(1+|v|)\, \mathrm{d}v \, \mathrm{d}x -\int_U \int_{B_R(0)} f_t^n(x,v)(1+|v|) \, \mathrm{d}v \, \mathrm{d}x \bigg| \geq C_2.
\end{align}
However since $G(t)$ is a contraction semigroup it follows for $n > 1/C_2$ by \eqref{eq-f0f0nbound},
\begin{align*}
\int_{U\times \mathbb{R}^3} (f_t(x,v)& -f_t^n(x,v))(1+|v|) \, \mathrm{d}x \, \mathrm{d}v  = \int_{U\times \mathbb{R}^3} G(t)(f_0 -f_0^n)(x,v)(1+|v|) \, \mathrm{d}x \, \mathrm{d}v \\
& \leq \int_{U\times \mathbb{R}^3} (f_0(x,v) -f_0^n(x,v))(1+|v|) \, \mathrm{d}x \, \mathrm{d}v  \leq \frac{1}{n}  < C_2.
\end{align*}
By the same argument,
\[ \int_{U\times \mathbb{R}^3} (f_t^n(x,v) -f_t(x,v))(1+|v|) \, \mathrm{d}x \, \mathrm{d}v < C_2.  \]
Hence we have a contradiction with \eqref{eq-ftcontupper} which completes the proof of \eqref{lem-ftmom}.

To show \eqref{lem-ftindb} fix $t \in [0,T]$. By \eqref{eq-bfl1bound},
\[ \int_{U \times \mathbb{R}^3} Bf_t(x,v)  \, \mathrm{d}x \, \mathrm{d}v \leq \int_{U \times \mathbb{R}^3} f_t(x,v) \pi (|v|+\beta)  \, \mathrm{d}x \, \mathrm{d}v.   \]
By the above calculations $f_t$ has finite first moment so this is finite as required.
\end{proof}

\bibliographystyle{plain}
\bibliography{bib}

\begin{thebibliography}{10}

\bibitem{arendt11}
Wolfgang Arendt, Charles J.~K. Batty, Matthias Hieber, and Frank Neubrander.
\newblock {\em Vector-valued {L}aplace transforms and {C}auchy problems},
  volume~96 of {\em Monographs in Mathematics}.
\newblock Birkh\"auser/Springer Basel AG, Basel, second edition, 2011.

\bibitem{arlott07}
Luisa Arlotti and Bertrand Lods.
\newblock Integral representation of the linear {B}oltzmann operator for
  granular gas dynamics with applications.
\newblock {\em Journal of Statistical Physics}, 129(3):517--536, 2007.

\bibitem{arlotti06}
Jacek Banasiak and Luisa Arlotti.
\newblock {\em Perturbations of positive semigroups with applications}.
\newblock Springer Monographs in Mathematics. Springer-Verlag London, Ltd.,
  London, 2006.

\bibitem{bisi14}
Marzia Bisi, Jos{\'e}~A Ca{\~n}izo, and Bertrand Lods.
\newblock Entropy dissipation estimates for the linear {B}oltzmann operator.
\newblock {\em Journal of Functional Analysis}, 269(4):1028--–1069, 2015.

\bibitem{bod15brown}
Thierry Bodineau, Isabelle Gallagher, and Laure Saint-Raymond.
\newblock The {B}rownian motion as the limit of a deterministic system of
  hard-spheres.
\newblock {\em Inventiones mathematicae}, 203(2):493--553, 2015.

\bibitem{bod15}
Thierry Bodineau, Isabelle Gallagher, and Laure Saint-Raymond.
\newblock From hard spheres dynamics to the {S}tokes--{F}ourier equations: an
  {L}2 analysis of the {B}oltzmann--{G}rad limit.
\newblock {\em Comptes Rendus Mathematique}, 353(7):623--627, 2015.

\bibitem{bold83}
Carlo Boldrighini, Leonid~A. Bunimovich, and Yakov.~G. Sina{\u\i}.
\newblock On the {B}oltzmann equation for the {L}orentz gas.
\newblock {\em J. Statist. Phys.}, 32(3):477--501, 1983.

\bibitem{born46}
Max Born and HS~Green.
\newblock A general kinetic theory of liquids. i. the molecular distribution
  functions.
\newblock In {\em Proceedings of the Royal Society of London A: Mathematical,
  Physical and Engineering Sciences}, volume 188, pages 10--18. The Royal
  Society, 1946.

\bibitem{carleman57}
Torsten Carleman.
\newblock {\em Problemes math{\'e}matiques dans la th{\'e}orie cin{\'e}tique de
  gaz}, volume~2.
\newblock Almqvist \& Wiksells boktr, 1957.

\bibitem{cerci88}
Carlo Cercignani.
\newblock {\em The {B}oltzmann equation and its applications}, volume~67 of
  {\em Applied Mathematical Sciences}.
\newblock Springer-Verlag, New York, 1988.

\bibitem{cercignani94}
Carlo Cercignani, Reinhard Illner, and Mario Pulvirenti.
\newblock {\em The mathematical theory of dilute gases}, volume 106 of {\em
  Applied Mathematical Sciences}.
\newblock Springer-Verlag, 1994.

\bibitem{cohn13}
Donald~L. Cohn.
\newblock {\em Measure theory}.
\newblock Birkh\"auser Advanced Texts: Basler Lehrb\"ucher. [Birkh\"auser
  Advanced Texts: Basel Textbooks]. Birkh\"auser/Springer, New York, second
  edition, 2013.

\bibitem{desvi07}
L.~Desvillettes and V.~Ricci.
\newblock The {B}oltzmann-{G}rad limit of a stochastic {L}orentz gas in a force
  field.
\newblock {\em Bull. Inst. Math. Acad. Sin. (N.S.)}, 2(2):637--648, 2007.

\bibitem{saintraymond13}
Isabelle Gallagher, Laure Saint-Raymond, and Benjamin Texier.
\newblock {\em From {N}ewton to {B}oltzmann: hard spheres and short-range
  potentials}.
\newblock Zurich Lectures in Advanced Mathematics. European mathematical
  society, 2013.

\bibitem{gall99}
Giovanni Gallavotti.
\newblock {\em Statistical Mechanics. A Short Treatise}.
\newblock Theoretical and Mathematical Physics. Springer-Verlag Berlin
  Heidelberg, 1999.

\bibitem{golse08}
Fran{\c{c}}ois Golse.
\newblock On the periodic {L}orentz gas and the {L}orentz kinetic equation.
\newblock {\em Ann. Fac. Sci. Toulouse Math. (6)}, 17(4):735--749, 2008.

\bibitem{lanford75}
Oscar~E. Lanford.
\newblock {\em Dynamical Systems, Theory and Applications: Battelle Seattle
  1974 Rencontres}, chapter Time evolution of large classical systems, pages
  1--111.
\newblock Springer Berlin Heidelberg, Berlin, Heidelberg, 1975.

\bibitem{lebo78}
Joel~L. Lebowitz and Herbert Spohn.
\newblock Transport properties of the {L}orentz gas: {F}ourier's law.
\newblock {\em J. Statist. Phys.}, 19(6):633--654, 1978.

\bibitem{lebo82b}
Joel~L. Lebowitz and Herbert Spohn.
\newblock Microscopic basis for {F}ick's law for self-diffusion.
\newblock {\em J. Statist. Phys.}, 28(3):539--556, 1982.

\bibitem{lebo82}
Joel~L. Lebowitz and Herbert Spohn.
\newblock Steady state self-diffusion at low density.
\newblock {\em J. Statist. Phys.}, 29(1):39--55, 1982.

\bibitem{lor05}
Hendrik Lorentz.
\newblock The motion of electrons in metallic bodies i.
\newblock {\em Koninklijke Nederlandsche Akademie van Wetenschappen
  Proceedings}, 7:438--453, 1905.

\bibitem{marklof10}
Jens Marklof.
\newblock Kinetic transport in crystals.
\newblock In {\em X{VI}th {I}nternational {C}ongress on {M}athematical
  {P}hysics}, pages 162--179. World Sci. Publ., Hackensack, NJ, 2010.

\bibitem{mark11}
Jens Marklof and Andreas Str{\"o}mbergsson.
\newblock The {B}oltzmann-{G}rad limit of the periodic {L}orentz gas.
\newblock {\em Ann. of Math. (2)}, 174(1):225--298, 2011.

\bibitem{matt08}
Karsten Matthies and Florian Theil.
\newblock Validity and non-validity of propagation of chaos.
\newblock {\em Analysis and Stochastics of growth processes and Interface
  Models}, pages 101--119, 2008.

\bibitem{matthies10}
Karsten Matthies and Florian Theil.
\newblock Validity and failure of the {B}oltzmann approximation of kinetic
  annihilation.
\newblock {\em Journal of nonlinear science}, 20(1):1--46, 2010.

\bibitem{matt12}
Karsten Matthies and Florian Theil.
\newblock A semigroup approach to the justification of kinetic theory.
\newblock {\em SIAM Journal on Mathematical Analysis}, 44(6):4345--4379, 2012.

\bibitem{molinet77}
Frederic~A Molinet.
\newblock Existence, uniqueness and properties of the solutions of the
  {B}oltzmann kinetic equation for a weakly ionized gas. i.
\newblock {\em Journal of Mathematical Physics}, 18(5):984--996, 1977.

\bibitem{pazy83}
Amnon Pazy.
\newblock {\em Semigroups of linear operators and applications to partial
  differential equations}, volume~44 of {\em Applied Mathematical Sciences}.
\newblock Springer-Verlag, 1983.

\bibitem{pul13}
Mario Pulvirenti, Chiara Saffirio, and Sergio Simonella.
\newblock On the validity of the {B}oltzmann equation for short range
  potentials.
\newblock {\em Reviews in Mathematical Physics}, 26(02), 2014.

\bibitem{spohn78}
Herbert Spohn.
\newblock The {L}orentz process converges to a random flight process.
\newblock {\em Comm. Math. Phys.}, 60(3):277--290, 1978.

\bibitem{spohn80}
Herbert. Spohn.
\newblock Kinetic equations from {H}amiltonian dynamics: the {M}arkovian
  approximations.
\newblock In {\em Kinetic theory and gas dynamics}, volume 293 of {\em CISM
  Courses and Lect.}, pages 183--211. Springer, Vienna, 1988.

\bibitem{spohn91}
Herbert Spohn.
\newblock {\em Large Scale Dynamics of Interacting Particles}.
\newblock Texts and Monographs in Physics. Springer Berlin Heidelberg, 1991.

\bibitem{uchiyama1988}
K{\=o}hei Uchiyama.
\newblock Derivation of the {B}oltzmann equation from particle dynamics.
\newblock {\em Hiroshima Math. J.}, 18(2):245--297, 1988.

\bibitem{bei80}
Henk van Beijeren, Oscar~E. Lanford, III, Joel~L. Lebowitz, and Herbert Spohn.
\newblock Equilibrium time correlation functions in the low-density limit.
\newblock {\em J. Statist. Phys.}, 22(2):237--257, 1980.

\end{thebibliography}


\end{document}